%%%%%%%%%%%%%%%%%%%%%%%%%%%%%%%%%%%%%%%%%%%%%%%%%%%%%%%
% A template for Wiley article submissions.
% Developed by Overleaf. 
%
% Please note that whilst this template provides a 
% preview of the typeset manuscript for submission, it 
% will not necessarily be the final publication layout.
%
% Usage notes:
% The "blind" option will make anonymous all author, affiliation, correspondence and funding information.
% Use "num-refs" option for numerical citation and references style.
% Use "alpha-refs" option for author-year citation and references style.

\documentclass[alpha-refs,serif]{wiley-article}
% \documentclass[blind,num-refs]{wiley-article}

%%%% If using XELATEX OR LUALATEX, uncomment this following line.
%%%% You must then also change \boldsymbol to \symbf,
%%%% if you want to use any bold math symbols.
% \setmathfont[Extension=.otf, BoldFont=*bold]{xits-math}

% Add additional packages here if required
\usepackage{siunitx}
\RequirePackage{hypernat}

\def\hat{\widehat}
\def\tilde{\widetilde}

\bibliographystyle{plainnat}

\newtheorem{assumption}{Assumption}

\newcommand{\diag}{\mbox{diag}}

\newcommand{\yu}{{}}

% Update article type if known
\papertype{Original Article}
% Include section in journal if known, otherwise delete
\paperfield{Journal Section}

\title{High Dimensional Robust Inference for Cox Regression Models}

% List abbreviations here, if any. Please note that it is preferred that abbreviations be defined at the first instance they appear in the text, rather than creating an abbreviations list.
\abbrevs{High Dimensional Robust Inference}

% Include full author names and degrees, when required by the journal.
% Use the \authfn to add symbols for additional footnotes and present addresses, if any. Usually start with 1 for notes about author contributions; then continuing with 2 etc if any author has a different present address.
%\author[1\authfn{1}]{Shengchun Kong}
%\author[2\authfn{1}]{Zhuqing Yu}
%\author[3\authfn{2}]{Xianyang Zhang}
%\author[4\authfn{1}]{Guang Cheng}
\author[1]{Shengchun Kong}
\author[2]{Zhuqing Yu}
\author[3]{Xianyang Zhang}
\author[4]{Guang Cheng}

%\contrib[\authfn{1}]{Equally contributing authors.}

% Include full affiliation details for all authors
\affil[1]{Manager, Biostatistics, Gilead Sciences Inc. 333 Lakeside Dr., Foster city, CA 94404. Email: kongshengchun@gmail.com}
\affil[2]{Graduate Student.Department of Statistics, Purdue University. 250 N. University St., West Lafayette, IN 47907. Email: zhuqing.yu.stat@gmail.com. Currently employed at AbbVie, Inc. North Chicago, IL 60064.}
\affil[3]{Assistant Professor. Department of Statistics, Texas A\&M University, College Station, Texas 77843. Email: {zhangxiany@stat.tamu.edu}}
\affil[4]{Professor. Department of Statistics, Purdue University. 250 N. University St., West Lafayette, IN 47907. Email: chengg@purdue.edu}

\corraddress{Guang Cheng. Department of Statistics, Purdue University. 250 N. University St., West Lafayette, IN 47907}
\corremail{chengg@purdue.edu}

%\presentadd[\authfn{2}]{Department, Institution, City, State or Province, Postal Code, Country}

\fundinginfo{Zhuqing Yu is partially Supported by NSF DMS-1151692, DMS-1418042. Guang Cheng's Research is sponsored by NSF CAREER Award DMS-1151692, DMS-1418042, DMS-1712907, DMS-1811812, DMS-1821183, Simons Fellowship in Mathematics, Office of Naval Research (ONR N00014-15-1-2331, ONR N00014-18-2759) and a grant from Indiana Clinical and Translational Sciences Institute.}

% Include the name of the author that should appear in the running header
\runningauthor{Shengchun Kong et al.}

\begin{document}

\maketitle

\begin{abstract}
We consider high-dimensional inference for potentially
misspecified Cox proportional hazard models based on low dimensional results by \cite{lin:1989}. A de-sparsified Lasso estimator is proposed based on the log partial likelihood function
and shown to converge to a pseudo-true parameter vector. Interestingly, the sparsity of the true parameter can be inferred from that of the above limiting parameter. Moreover, each component of the above (non-sparse) estimator is shown to be asymptotically normal with a variance that can be consistently estimated even under model misspecifications. In some cases, this asymptotic distribution leads to valid statistical inference procedures, whose empirical performances are illustrated through numerical examples.

% Please include a maximum of seven keywords
\keywords{Cox regression, debiased Lasso, high dimension, partial likelihood, robust inference}
\end{abstract}

\section{Introduction}
With rapid technology advances, it is now possible to collect a
large amount of healthcare data as often observed in genomic or
image studies. In survival analysis, data are often analyzed to
investigate how covariates for patient information affect the
occurrence of some events such as disease. The Cox proportional
hazards model \citep{cox:1972} is one of the most widely used
survival models for censored time-to-event data. When the number of
covariates collected is larger than sample size, high dimensional
regularized Cox regression (e.g., under Lasso penalty) has been
proposed in the literature, e.g., \cite{gui:2005, bradic:2011,
jelena:2015}. In particular, \cite{kong:2014} and \cite{huang:2014}
studied finite sample oracle inequalities for Lasso regularized Cox
models under random and fixed designs, respectively.

As a natural followup work, we consider {\em high dimensional
inference} for Cox regression models under possible
misspecifications. Recent process towards high dimensional
inference is mostly concerned with (generalized) linear models. One particular method is
through de-sparsifying Lasso estimator; see \cite{van:2014},
\cite{javanmard:2014} and \cite{zhang:2014}. In this paper, a
similar de-sparsified Lasso estimator is proposed based on the log
partial likelihood of Cox regression models. A key technical condition in justifying high dimension inference
for (generalized) linear models is that the summands in the log
likelihood need to be Lipschitz and independently and identically distributed (i.i.d.).
Unfortunately, the summands in the log partial likelihood for censored survival data are neither i.i.d. nor Lipschitz. One major novelty in our theoretical analysis is to introduce an
intermediate function with a sum of i.i.d. non-Lipschitz terms for
approximating the above log partial likelihood as in \cite{kong:2014}.
We further apply mean value theorem to deal with the non-Lipschitz loss function under a bounded condition, i.e., Assumption~\ref{assp:xbeta}.

This novel technical device enables us to derive the limiting value of our
proposed (non-sparse) estimator, called as {\em pseudo-true} parameter, which turns out to be determined by the intermediate loss function proposed above and can be interpreted meaningfully; see Example 1. Note that the pseudo-true parameter is not necessarily
sparse even if the underlying true hazard function depends only on a few covariates. Fortunately, we are able to identify a situation where the sparsity of the true parameter can be inferred from that of the above limiting parameter. Specifically, the inactive variables are always included in the sparsity set estimated through the working model.

Another crucial feature of our work is that it does not require the
model to be correctly specified. The consequences of misspecifying
low dimensional Cox models have been extensively investigated in
\cite{gail:1984, struthers:1986, lin:1989} among others. High dimensional inference for
misspecified linear models have recently been studied in
\cite{buhlmann:2015}. To perform valid statistical inference, we establish the asymptotic distribution that centers around the pseudo-true parameter, and further provide a robust variance estimation formula that works even under model misspecifications. Empirical performances are demonstrated in both correctly specified and misspecified Cox regression models. While this manuscript was under preparation, we note an arxiv work \citep{fang:2014} for high dimensional inference on {\em correctly specified} Cox regression based on decorrelated method. \yu{ During our revision, we also note another arxiv work \citep{yu:2018} for constructing confidence intervals for high dimensional Cox model based on CLIME estimator \citep{cai:2011}, where the covariates are possibly time-dependent.} Nevertheless, our inference results are constructed based on de-sparsified Lasso with a particular focus on {\em misspecification}, and analyzed through a different intermediate function approach.

\section{Robust de-sparsified Lasso estimator}
Consider a survival model with a true hazard function
$\lambda_0(t|\bm{X})$ for a failure time $T$ given a covariate vector
${\bm X}=(X_1,\dots,X_p)^T$. Denote $C$ as the censoring time,
$Y=\min(T,C)$ and  $\Delta=1(T\le C)$. Let $(Y_i, \Delta_i,
{\bm X_i})_{i=1}^n$ be $n$ i.i.d. observations from the underlying true
model and $\bf{X}=$ $(\bm X_1^T,\ldots,\bm X_n^T)^T$ be the $n\times p$ design matrix.
We fit a potentially misspecified {\em working model} to the
above observations:
\begin{eqnarray}\label{cox1}
\lambda(t|{\bm X})=\lambda(t)\exp({\bm X}^T\beta),
\end{eqnarray}
where $\beta=(\beta_1,\dots,\beta_p)^T$ and $\lambda(t)$ is an
unknown baseline hazard function. Note that $\lambda_0(t|{\bm X})$ does
not need to take an exponential regression form, or has to be a
proportional hazard model.

Under the working model (\ref{cox1}), the negative log partial likelihood function is written as
\begin{equation}
l_n(\beta)=-\frac{1}{n} \sum_{i=1}^n \left[{\bm X_i}^T\beta-\log\left\{\frac{1}{n} \sum_{j=1}^n 1(Y_j\geq Y_i)\exp({\bm X_j}^T\beta)\right\}\right]\Delta_i, \label{partiallikelihood}
\end{equation}
with its first and second derivatives
\begin{align*}
\dot{l}_n(\beta)=-\frac{1}{n}\sum_{i=1}^n \left\{{\bm X_i}-\frac{\widehat{\mu}_1(Y_i;\beta)}{\widehat{\mu}_0(Y_i;\beta)}\right\}\Delta_i,\quad\ddot{l}_n(\beta)=\frac{1}{n}\sum_{i=1}^n \left\{\frac{\widehat{\mu}_2(Y_i;\beta)}{\widehat{\mu}_0(Y_i;\beta)}
-\left[\frac{\widehat{\mu}_1(Y_i;\beta)}{\widehat{\mu}_0(Y_i;\beta)}\right]^{\otimes 2}\right\}\Delta_i,
\end{align*}
where $\widehat{\mu}_r(t;\beta)=n^{-1}\sum_{j=1}^n 1(Y_j\geq t){\bm X_j}^{\otimes r}\exp({{\bm X_j}^T\beta})$ and \yu{${\otimes}$ represents the Kronecker product.\footnote{For a matrix $A\in \mathbb{R}^{m\times n}$ and a matrix $B \in \mathbb{R}^{p\times q}$, the Kronecker product $A\otimes B$ is a matrix in $\mathbb{R}^{mp\times nq}$ such that \[
\begin{bmatrix}
    a_{11}B & \dots  & a_{1n}B \\
    \vdots  & \ddots &\vdots \\
    a_{m1}B & \dots  & a_{mn}B
\end{bmatrix}
\].
} In particular, we have {${\bm X_j}^{\otimes 0}=1, {\bm X_j}^{\otimes 1}={\bm X_j}$ and ${\bm X_j}^{\otimes 2}={\bm X_j}{\bm X_j}^T$.}}

The Lasso estimator for $\beta$ is defined as
\begin{equation*}
\widehat{\beta}:=\arg\min_{\beta\in\mathcal{R}^p} \left\{l_n(\beta)+2\lambda \|\beta\|_1\right\},\label{def:Lasso}
\end{equation*}
where $\|\cdot\|_1$ is the $\ell_1$ norm. It is known that $\widehat{\beta}$ does not possess a tractable limiting distribution \citep{kong:2014,huang:2014}. Inspired by the recent de-sparsifying idea, we construct a non-sparse estimator by inverting the Karush-Kuhn-Tucker (KKT) condition:
\begin{equation*}\label{bhat}
\widehat{b}:=(\widehat{b}_1,\dots,\widehat{b}_p)^T=\widehat{\beta}-\widehat{\Theta}\dot{l}_n(\widehat{\beta}),
\end{equation*}
where $\widehat \Theta$ is a reasonable approximation for the inverse of $\widehat{\Sigma}:=\ddot{l}_n(\widehat{\beta})$. We remark that the procedure of constructing $\widehat{b}$ remains the same regardless whether the working model (\ref{cox1}) is correctly specified or not. As will be shown in Section~\ref{sec:the}, the limiting value of $\widehat b$ can be interpreted meaningfully. Moreover, $\widehat b$ is shown to be asymptotically normal, whose variance can be estimated consistently even under model misspecifications.

The approximation $\widehat \Theta$ can be constructed by performing nodewise Lasso as follows. We first re-write $\ddot{l}_n(\beta)$ as a product of a matrix and its transpose:
\begin{eqnarray}\label{prodfor}
\ddot{l}_n(\beta)= C_{\beta}^T C_{\beta},
\end{eqnarray}
where
\begin{eqnarray}\label{cmatrix}
&&C_{\beta}:=\begin{pmatrix}
             n^{-1}\Delta_1 1(Y_1\geq Y_1)\sqrt{\frac{\exp({\bm X_1}^T{\beta})}{\widehat{\mu}_0(Y_1;{\beta})}}\left({\bm X_1}^T-\frac{\widehat{\mu}_1^T(Y_1;{\beta})}{\widehat{\mu}_0(Y_1;{\beta})}\right) \\
             \vdots \\
             n^{-1}\Delta_1 1(Y_n\geq Y_1)\sqrt{\frac{\exp({\bm X_n}^T{\beta})}{\widehat{\mu}_0(Y_1;{\beta})}}\left({\bm X_n}^T-\frac{\widehat{\mu}_1^T(Y_1;{\beta})}{\widehat{\mu}_0(Y_1;{\beta})}\right) \\
              \vdots \\
             n^{-1}\Delta_n 1(Y_1\geq Y_n)\sqrt{\frac{\exp({\bm X_1}^T{\beta})}{\widehat{\mu}_0(Y_n;{\beta})}}\left({\bm X_1}^T-\frac{\widehat{\mu}_1^T(Y_n{\beta})}{\widehat{\mu}_0(Y_n;{\beta})}\right) \\
              \vdots \\
              n^{-1}\Delta_n 1(Y_n\geq Y_n)\sqrt{\frac{\exp({\bm X_n}^T{\beta})}{\widehat{\mu}_0(Y_n;{\beta})}}\left({\bm X_n}^T-\frac{\widehat{\mu}_1^T(Y_n;{\beta})}{\widehat{\mu}_0(Y_n;{\beta})}\right) \\
            \end{pmatrix}_{n^2\times p}.
\end{eqnarray}
Denote $C_{\widehat\beta,j}$ as the $j$-th column of $C_{\widehat{\beta}}$ and $C_{\widehat\beta,-j}$ as the submatrix of $C_{\widehat{\beta}}$ without $C_{\widehat\beta,j}$. Based on the decomposition (\ref{prodfor}), we run the following nodewise Lasso $p$ times:
\begin{equation}\label{nodewiseLasso}
\widehat{\gamma}_j:=\arg\min_{\gamma}\{\|C_{\widehat{\beta},j}-C_{\widehat{\beta},-j}\gamma\|^2+2\lambda_j\|\gamma\|_1\},
\end{equation}
where $\widehat{\gamma}_j=\{\widehat{\gamma}_{j,k}; k=1,\dots,p, k\ne j\}$. Define
%\begin{equation*}\label{tau}
$\widehat{\tau}_j^2=\widehat{\Sigma}_{j,j}-\widehat{\Sigma}_{j,\backslash j} \widehat{\gamma}_j, $
%\end{equation*}
where $\widehat{\Sigma}_{j,j}$ denotes the $j$-th diagonal element
of $\widehat{\Sigma}$ and $\widehat{\Sigma}_{j,\backslash j}$
denotes the $j$-th row of $\widehat{\Sigma}$ without
$\widehat{\Sigma}_{j,j}$. We now define
\begin{equation*}\label{Thetahat}
\widehat{\Theta}:=\widehat{T}^{-2}\widehat{G},%=\widehat{\Theta}_{Lasso}
\end{equation*}
where
\begin{eqnarray*}\label{chat}
\widehat{T}^2:=\diag(\widehat{\tau}_1^2,\cdots,\widehat{\tau}_p^2)\;\;\;\;\mbox{and}\;\;\;\;
\widehat{G}:=\begin{pmatrix}
1 & -\widehat{\gamma}_{1,2} & \cdots & -\widehat{\gamma}_{1,p} \\
-\widehat{\gamma}_{2,1} & 1 & \cdots & -\widehat{\gamma}_{2,p} \\
\vdots & \vdots & \ddots & \vdots \\
-\widehat{\gamma}_{p,1} & -\widehat{\gamma}_{p,2} & \cdots & 1 \\
\end{pmatrix}.
\end{eqnarray*}

\section{Theoretical properties}\label{sec:the}
\subsection{Pseudo-true Parameter}
In this section, we derive the limiting value of $\widehat{b}$, denoted as $\beta_0$, and further discuss the meaning of its sparsity. As discussed previously, the summands in log partial likelihood (\ref{partiallikelihood}), based on which $\widehat b$ is constructed, are neither i.i.d. nor Lipschitz. Therefore, we first need to introduce an intermediate function that approximates (\ref{partiallikelihood}):
\begin{equation*}
\widetilde{l}_n(\beta):= -\frac{1}{n}\sum_{i=1}^n
\left\{{\bm X_i}^T\beta-\log \mu_0(Y_i;\beta)\right\}\Delta_i,
\end{equation*}
where $\mu_r(t;\beta)=\mathbb{E}\{\widehat{\mu}_r(t;\beta)\}=\mathbb{E}\left\{1(Y\geq t){\bm X}^{\otimes r}\exp({\bm X}^T\beta)\right\}.$
As implied by Theorem \ref{thm:normalthm}, the {\em pseudo-true} parameter $\beta_0$ is the unique solution to a system of $p$ equations,
\begin{eqnarray}\label{pseudo}
\int_0^{\infty}\mu_1(t)dt-\int_0^{\infty}\frac{\mu_1(t;\beta)}{\mu_0(t;\beta)} \mu_0(t)dt=0,
\end{eqnarray}
where $\mu_r(t)=\mathbb{E}\{1(Y\geq t){\bm X}^{\otimes r}\lambda_0(t|{\bm X})\}$, provided that
\begin{equation}{\label{variance}}
\Sigma_{\beta_0}={\displaystyle\int_0^{\infty}}\left\{\frac{\mu_2(t;\beta_0)}{\mu_0(t;\beta_0)}
-\left[\frac{\mu_1(t;\beta_0)}{\mu_0(t;\beta_0)}\right]^{\otimes 2}\right\}\mu_0(t)dt
\end{equation}
is positive definite. It is easy to verify that  (\ref{pseudo}) and (\ref{variance}) turn out to be $\mathbb{E}\dot{\widetilde{l}}_n(\beta_0)=0$ and  $\mathbb{E}\ddot{\widetilde{l}}_n(\beta_0)=\Sigma_{\beta_0}$, respectively. Hence, $\tilde{l}_n$ indeed plays a similar role as a true likelihood for $\beta$.

From the example below, we note that $\widehat b$ with some particular limit value $\beta_0$ can still be useful for some statistical inference problems.
\begin{example}[Example 1]\label{example}
Suppose that the true hazard function is $\lambda_1(t)\exp(\gamma X_1^2)$ in comparison with the working model $\lambda(t)\exp({\bm X}^T\beta)$. Let $X_{-1}$ be the sub-vector of ${\bm X}$ without the first element $X_1$. If we assume that $ X_{-1}$ is independent of $ X_1$ and is symmetric about zero, it can be shown by substituing into (\ref{pseudo}) that $\beta_{0}=(0,\ldots,0)^T$, provided that the censoring time $C$ is independent of ${\bm X}$. In this case, according to Theorem \ref{thm:normalthm} below, we can construct a valid test based on $\widehat b_j$ for testing the null hypothesis that the failure time does not depend on $X_j$, for any $1\le j\le p$.
\end{example}

%\subsection{Sparsity of projection}

The pseudo-true parameter $\beta_0$ defined in (\ref{pseudo}) is not necessarily sparse even if the underlying true hazard function only depends on a few covariates. %\Cheng{Any example not satisfying the conditional expectation condition?}
Theorem~\ref{thm:projection} says that if we infer a variable as an active variable (significantly different from zero) in the working model, it must be an active variable in the true model. Interestingly, this directly implies $\beta_{0j}=0$ for $2\le j\le p$  in Example 1 without doing any calculation. %$S_{\lambda_0}^c=\{2,\cdots,p\}$

Define $S_0=\{j: \beta_{0j} \neq 0\}$ and $S_{\lambda_0}$ $(S_{C_0})$ as the index set of all variables having an influence on the true conditional hazard function $\lambda_0(t|\cdot)$ (conditional distribution of $C$ given ${\bm X}$). Let $\bm X_1^*$ $({\bm X}_2^*)$ be a sub-vector of ${\bm X}$ with $S_{\lambda_0}\cup S_{C_0}$ (the complement of $S_{\lambda_0}\cup S_{C_0})$ being its index set.

\begin{theorem}\label{thm:projection}
Suppose that $\mathbb{E}({\bm X}_2^*\mid \bm X_1^*)=0$. Then we have $S_0\subseteq S_{\lambda_0}\cup S_{C_0}.$ If we further assume that the censoring time $C$ is independent of ${\bm X}$, then $S_0\subseteq S_{\lambda_0}$.
\end{theorem}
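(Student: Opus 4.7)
The plan is to characterize $\beta_0$ as the unique critical point of $\tilde{F}(\beta):=\mathbb{E}\tilde{l}_n(\beta)$ and then locate that critical point on the subspace $\{\beta^{(2)}=0\}$. Writing $\beta=(\beta^{(1)},\beta^{(2)})$ for the coordinate partition induced by $S_{\lambda_0}\cup S_{C_0}$ and its complement, equation~(\ref{pseudo}) is precisely $\nabla\tilde{F}(\beta_0)=0$; since $\log\mu_0(t;\beta)$ is convex in $\beta$, $\tilde{F}$ is convex, and because its Hessian at $\beta_0$ equals $\Sigma_{\beta_0}\succ 0$ by~(\ref{variance}), this critical point is unique. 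To obtain $\beta_0^{(2)}=0$ it therefore suffices to produce some $\tilde{\beta}^{(1)}$ such that $\nabla\tilde{F}\bigl((\tilde{\beta}^{(1)},0)\bigr)=0$; uniqueness then forces $\beta_0=(\tilde{\beta}^{(1)},0)$.

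The heart of the argument is the identity
\[
\nabla_{\beta^{(2)}}\tilde{F}\bigl((\beta^{(1)},0)\bigr)=0 \qquad \text{for every } \beta^{(1)}.
\]
Granting this, I take $\tilde{\beta}^{(1)}$ to be a minimizer of the convex reduced objective $\beta^{(1)}\mapsto \tilde{F}((\beta^{(1)},0))$; its first-order condition gives $\nabla_{\beta^{(1)}}\tilde{F}((\tilde{\beta}^{(1)},0))=0$, and combining the two blocks yields a full zero gradient at $(\tilde{\beta}^{(1)},0)$.

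To verify the displayed identity, direct differentiation together with the compensator formula $\mathbb{E}[\bm X\Delta f(Y)]=\int_0^\infty \mathbb{E}[1(Y\ge t)\bm X\lambda_0(t|\bm X)]f(t)\,dt$ yields
\[
\nabla_{\beta^{(2)}}\tilde{F}(\beta)=-\int_0^\infty \mu_1^{(2)}(t)\,dt + \int_0^\infty \frac{\mu_1^{(2)}(t;\beta)}{\mu_0(t;\beta)}\,\mu_0(t)\,dt,
\]
where the superscript $(2)$ denotes the subvector indexed by the complement of $S_{\lambda_0}\cup S_{C_0}$. Two observations kill both integrands when $\beta^{(2)}=0$: (i) by the definitions of $S_{\lambda_0}$ and $S_{C_0}$, together with the standard conditional-independence convention $T\perp C\mid\bm X$, both $\lambda_0(t|\bm X)$ and $\mathbb{P}(Y\ge t\mid\bm X)$ depend on $\bm X$ only through $\bm X_1^*$; (ii) at $\beta^{(2)}=0$ the working risk $\exp(\bm X^T\beta)=\exp((\bm X_1^*)^T\beta^{(1)})$ is also a function of $\bm X_1^*$ alone. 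Conditioning on $\bm X_1^*$ and pulling the $\bm X_1^*$-measurable factors outside the inner expectation, each integrand reduces to $\mathbb{E}\bigl[\mathbb{E}(\bm X_2^*\mid\bm X_1^*)\cdot h(t,\bm X_1^*)\bigr]=0$ by the hypothesis $\mathbb{E}(\bm X_2^*\mid\bm X_1^*)=0$.

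The second statement of the theorem follows immediately: if $C\perp\bm X$ then $S_{C_0}=\emptyset$, so $\bm X_1^*$ indexes $S_{\lambda_0}$ and the first part reads $S_0\subseteq S_{\lambda_0}$. The most delicate point I anticipate is rigorously establishing the existence of the reduced minimizer $\tilde{\beta}^{(1)}$; this would require a coerciveness or positive-definiteness argument for the Hessian of the reduced objective, in the spirit of~(\ref{variance}). Everything else is essentially bookkeeping once the conditional-mean-zero calculation is in hand.
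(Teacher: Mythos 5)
Your proposal is correct and follows essentially the same route as the paper: both arguments exploit uniqueness of the critical point of the population objective (strict convexity plus positive definiteness of $\Sigma_{\beta_0}$), exhibit a candidate of the form $(\tilde\beta^{(1)},0)$ solving the reduced score equation, and kill the $\beta^{(2)}$-block of the gradient by conditioning on $\bm X_1^*$ and invoking $\mathbb{E}(\bm X_2^*\mid \bm X_1^*)=0$ together with the fact that $\lambda_0(t|\bm X)$, the censoring law, and $\exp(\bm X^T\beta)$ at $\beta^{(2)}=0$ all depend on $\bm X$ only through $\bm X_1^*$. The existence issue you flag for the reduced minimizer is likewise left implicit in the paper, which simply asserts that $\beta_{01}^*$ is the unique solution of the reduced equation.
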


In the theorem above, we do not need Gaussian design condition, which is required in \cite{buhlmann:2015} for misspecified linear models. Rather, a conditional expectation condition $\mathbb{E}({\bm X}_2^*\mid \bm X_1^*)=0$ suffices (even for generalized linear regression).

\subsection{Asymptotic Distribution}{\label{sec:asymp}}

%do we still need these two forms below?
%\Cheng{ For convenience, we define the first and second derivatives of $\widetilde{l}_n$ as
%   \begin{align*}
    %\dot{\widetilde{l}}_n(\beta)=-\frac{1}{n}\sum_{i=1}^n \left\{{\bm X_i}-\frac{\mu_1(Y_i;\beta)}{\mu_0(Y_i;\beta)}\right\}\Delta_i,\quad \ddot{\widetilde{l}}_n(\beta)=\frac{1}{n}\sum_{i=1}^n \left\{\frac{\mu_2(Y_i;\beta)}{\mu_0(Y_i;\beta)} -\left[\frac{\mu_1(Y_i;\beta)}{\mu_0(Y_i;\beta)}\right]^{\otimes 2}\right\}\Delta_i.
%   \end{align*}}

In this section, we show that $n^{1/2}(\widehat{b}-\beta_0)$ converges to a normal distribution and further provide a {\em robust} variance estimate formula that is consistent even under misspecifications.

Recall that $\widetilde l_n$ is the intermediate function. Some straightforward calculation shows that $\ddot{\widetilde{l}}_n(\beta)$ can be re-written as (in comparison with (\ref{prodfor})) $$\ddot{\widetilde{l}}_n(\beta)=\frac{1}{n}\sum_{i=1}^n \left\{\frac{\mu_2(Y_i;\beta)}{\mu_0(Y_i;\beta)}
-\left[\frac{\mu_1(Y_i;\beta)}{\mu_0(Y_i;\beta)}\right]^{\otimes 2}\right\}\Delta_i=\mathbb{E}_{X,Y}(D^T_{\beta}D_{\beta}),$$
where $\mathbb{E}_{{\bm X},Y}$ denotes the expectation with respect to ${\bm X}$ and $Y$ only, and
\begin{eqnarray*}
    &&D_{\beta}:=\begin{pmatrix}
        \frac{1}{n}\Delta_1 1(Y\geq Y_1)\sqrt{\frac{\exp({\bm X}^T\beta)}{\mu_0(Y_1;\beta)}}\left({\bm X}^T-\frac{\mu_1^T(Y_1;\beta)}{\mu_0(Y_1;\beta)}\right) \\
        \vdots \\
        \frac{1}{n}\Delta_1 1(Y\geq Y_1)\sqrt{\frac{\exp({\bm X}^T\beta)}{\mu_0(Y_1;\beta)}}\left({\bm X}^T-\frac{\mu_1^T(Y_1;\beta)}{\mu_0(Y_1;\beta)}\right) \\
        \vdots \\
        \frac{1}{n}\Delta_n 1(Y\geq Y_n)\sqrt{\frac{\exp({\bm X}^T\beta)}{\mu_0(Y_n;\beta)}}\left({\bm X}^T-\frac{\mu_1^T(Y_n;\beta)}{\mu_0(Y_n;\beta)}\right) \\
        \vdots \\
        \frac{1}{n}\Delta_n 1(Y\geq Y_n)\sqrt{\frac{\exp({\bm X}^T\beta)}{\mu_0(Y_n;\beta)}}\left({\bm X}^T-\frac{\mu_1^T(Y_n;\beta)}{\mu_0(Y_n;\beta)}\right) \\
    \end{pmatrix}_{n^2\times p}.
\end{eqnarray*}

 % \textbf{Note that $\gamma_{\beta_0,j}$ does not necessarily minimize $\mathbb{E}(C_{\beta_0,j}-C_{\beta_0,-j}\gamma)^2$. In the proofs of Lemma \ref{lemma:tau} and Theorem \ref{thm:normalthm}, we will show that $\gamma_{\beta_0,j}$ asymptotically minimizes $\mathbb{E}(C_{\beta_0,j}-C_{\beta_0,-j}\gamma)^2$ under mild conditions. What is the point we want to make here ?}

Before stating our main assumptions, we need the following notation. For $1\le j\le p$, define $\gamma_{\beta_0,j}= \arg\min_{\gamma}\mathbb{E} (D_{\beta_0,j}-D_{\beta_0,-j}\gamma)^2$.
Let $\Theta_{\beta_0}=\Sigma_{\beta_0}^{-1}$ (assume to exist) and $\tau_{\beta_0,j}^2:=\Theta_{\beta_0,jj}^{-1}$. For simplicity, we write $\Theta=\Theta_{\beta_0}$. Recall that $\bf{X}=$$(\bm X_1^T,\ldots,\bm X_n^T)^T$ is the $n\times p$ design matrix.

\begin{assumption}\label{assp:xbounded}
	$\|\bf{X}\|_{\infty}=\max_{i,j}|{\bm X}_{i,j}|\leq K_1<\infty$.
\end{assumption}
\begin{assumption}\label{assp:xbeta}
 $\|\bf{X}\beta_0\|_{\infty}\leq K_2<\infty$.
\end{assumption}
\begin{assumption}\label{assp:agamma}
	$\|A_{\beta_0,-j}\gamma_{\beta_0,j}\|_{\infty}=\mathcal{O}(1)$ $\forall\ j$, where $A_{\beta_0,-j}$ is defined in the Appendix.
\end{assumption}
\begin{assumption}\label{assp:eigen}
	 The smallest eigenvalue of $\Sigma_{\beta_0}$ is bounded away from zero and  $\|\Sigma_{\beta_0}\|_{\infty}=\mathcal{O}(1)$.
\end{assumption}
\begin{assumption}\label{assp:stoptime}
	The observation time stops at a finite time $\tau>0$ with probability $\xi:=P(Y\geq \tau)>0$.
\end{assumption}
\begin{assumption}\label{assp:s0sj}
	 $s_0=o(\sqrt{n}/\log(p))$ and $s_j=o(\{\sqrt{n}/\log(p)\}^{2/3})$, where $s_0=|S_0|$ and $s_j$ is the number of off-diagonal non-zeros of the $j$-th row of $\Theta$.
\end{assumption}
\begin{assumption}\label{assp:lambda0j}
$\lambda\asymp \sqrt{\log p/n}$ and $\lambda_j\asymp \sqrt{\log p/n}$ uniformly for $1\le j\le p.$
\end{assumption}

Assumptions \ref{assp:xbounded}\ -- \ref{assp:eigen} are the same as Conditions (iii), (iv) and (v) in Theorem 3.3 of \cite{van:2014}. Assumption \ref{assp:stoptime} is typically required in survival analysis, see \cite{andersen:1982}. The condition on $s_0$ in
Assumption \ref{assp:s0sj} is also typical for the de-sparsified Lasso method, while the condition imposed on $s_j$ is to ensure that the $\ell_1$ difference between $\widehat{\Theta}_j$ and $\Theta_j$ is of the order $o_P(1/\sqrt{\log p})$, where $\widehat{\Theta}_j$ and $\Theta_j$ are the $j$-th rows of $\widehat{\Theta}$ and $\Theta$ respectively. Note that Assumption \ref{assp:xbeta} significantly relaxes the bounded condition on $\sup_{\beta} \|\beta\|_1$ imposed in \cite{kong:2014}. In fact, with Assumption \ref{assp:xbeta}, we can obtain a similar non-asymptotic oracle inequality as that in \cite{kong:2014} by choosing a slightly larger constant in the tuning parameter $\bar{\lambda}_{n,0}^B$ defined therein.

%\Cheng{move to appendix.....By Assumptions (D2) and the consistency result in Lemma \ref{lemma:betaconsis}, we have $\exp({\bm X}^T\beta)$ bounded above by a constant $U$, for any $\beta$ between $\beta_0$ and $\widehat{\beta}$. }

Lemma \ref{lemma:betaconsis} (Lemma \ref{lem:tj1}) describes the difference between $\widehat{\beta}$ and $\beta_0$ ($\widehat{\Theta}_j$ and $\Theta_j$). We omit the proof of Lemma~\ref{lemma:betaconsis}, which can be straightforwardly adapted from \cite{kong:2014} under a weaker condition Assumption \ref{assp:xbeta} as discussed above. Our proof of Lemma~\ref{lem:tj1} differs from \cite{van:2014} as $\gamma_{\beta_0,j}$ (used in the analysis of $\widehat{\tau}_{\widehat{\beta},j}^2-\tau_{\beta_0,j}^2$) does not necessarily minimize $\mathbb{E}(C_{\beta_0,j}-C_{\beta_0,-j}\gamma)^2$. This is due to the introduction of our intermediate function.

\begin{lemma}\label{lemma:betaconsis}
    Under Assumptions \ref{assp:xbounded} - \ref{assp:lambda0j}, we have
    \begin{eqnarray*}
\|\widehat{\beta}-\beta_0\|_1=\mathcal{O}_P(\lambda s_0),\qquad \|\bf{X}(\widehat{\beta}-\beta_0)\|^2/n=\mathcal{O}_P(\lambda^2 s_0).\label{betaconsis3}
    \end{eqnarray*}
%\textbf{$\|{\bm X}(\widehat{\beta}-\beta_0)\|^2=\mathcal{O}_P(\lambda^2 s_0)$ remove?},\qquad

    %&& \|{\bm X}(\widehat{\beta}-\beta_0)\|^2=\mathcal{O}_P(s_0\lambda^2), ~~ \|{\bm X}(\widehat{\beta}-\beta_0)\|_n^2=\mathcal{O}_P(s_0\lambda^2)\label{xbetaconsis}
\end{lemma}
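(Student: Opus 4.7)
The strategy is the classical Lasso oracle-inequality scheme, but routed through the intermediate loss $\widetilde{l}_n$ from Section~\ref{sec:the} in order to bypass the fact that the summands of $l_n$ are neither i.i.d.\ nor globally Lipschitz in $\beta$. First, from the definition of $\widehat{\beta}$, I would write the basic inequality $l_n(\widehat{\beta})-l_n(\beta_0)+2\lambda\|\widehat{\beta}\|_1 \leq 2\lambda\|\beta_0\|_1$, and on the good event $\mathcal{T}=\{2\|\dot{l}_n(\beta_0)\|_\infty\leq\lambda\}$ rearrange it in the standard way to trap $v:=\widehat{\beta}-\beta_0$ in the Lasso cone $\{\|v_{S_0^c}\|_1\leq 3\|v_{S_0}\|_1\}$, while extracting the quadratic term via a second-order Taylor expansion of $l_n$ at $\beta_0$.

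Second, I would control the score $\|\dot{l}_n(\beta_0)\|_\infty$ by splitting it as $\|\dot{\widetilde{l}}_n(\beta_0)\|_\infty+\|\dot{l}_n(\beta_0)-\dot{\widetilde{l}}_n(\beta_0)\|_\infty$. The first piece has mean zero by the pseudo-true equation (\ref{pseudo}); its summands are bounded uniformly under Assumptions~\ref{assp:xbounded} and \ref{assp:xbeta}, together with Assumption~\ref{assp:stoptime} (which keeps $\mu_0(t;\beta_0)$ bounded away from zero on $[0,\tau]$), so a Hoeffding union bound over the $p$ coordinates gives $\mathcal{O}_P(\sqrt{\log p/n})=\mathcal{O}_P(\lambda)$. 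The second piece is governed by $\widehat{\mu}_r(t;\beta_0)-\mu_r(t;\beta_0)$ for $r=0,1$ uniformly in $t\in[0,\tau]$; a bracketing argument over the monotone indicator class $\{1(Y\geq t)\}$ combined with the same boundedness controls yields the same rate.

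Third, I need a compatibility (restricted eigenvalue) condition for the quadratic form appearing in the Taylor expansion. I would compare $\ddot{l}_n$ on the segment between $\widehat{\beta}$ and $\beta_0$ with the population object $\Sigma_{\beta_0}$: Assumption~\ref{assp:xbeta} together with $\|\mathbf{X}\|_\infty\leq K_1$ and an a priori coarse $\ell_1$-bound on $\widehat{\beta}-\beta_0$ keeps $\|\mathbf{X}\beta\|_\infty$ uniformly bounded along the segment, which in turn keeps each factor $\exp(\mathbf{X}_j^T\beta)/\widehat{\mu}_0(Y_i;\beta)$ inside $\ddot{l}_n(\beta)$ both upper and lower bounded. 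This two-sided control substitutes for the missing Lipschitz property and, via the mean value theorem, yields $\ddot{l}_n(\tilde{\beta})=\ddot{l}_n(\beta_0)+o_P(1)$ in the relevant quadratic-form sense. Uniform closeness of $\ddot{l}_n(\beta_0)$ to $\ddot{\widetilde{l}}_n(\beta_0)$ (again through the $\widehat{\mu}_r\to\mu_r$ controls) and Assumption~\ref{assp:eigen} on $\Sigma_{\beta_0}$ then transfer the population compatibility constant to the empirical one up to $o_P(1)$ error.

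Combining these three ingredients in the usual Lasso manner simultaneously delivers $\|\widehat{\beta}-\beta_0\|_1=\mathcal{O}_P(\lambda s_0)$ and $\|\mathbf{X}(\widehat{\beta}-\beta_0)\|^2/n=\mathcal{O}_P(\lambda^2 s_0)$. The main obstacle, and the reason the adaptation from \cite{kong:2014} is not entirely routine under the weaker Assumption~\ref{assp:xbeta}, is step three: one can no longer invoke a global bound on $\sup_\beta\|\beta\|_1$ to tame the Taylor remainder, so the curvature estimate has to be produced by a two-stage bootstrap argument, first deducing a crude rate from the score inequality alone, then feeding that rate back in to justify the mean value theorem on the event $\{\|\mathbf{X}(\widehat{\beta}-\beta_0)\|_\infty\text{ is small}\}$ where the loss is effectively Lipschitz.
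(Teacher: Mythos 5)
Your plan is essentially the route the paper itself takes: the paper omits the proof and states that it is a straightforward adaptation of the oracle inequality in \cite{kong:2014}, using the intermediate loss $\widetilde{l}_n$ to restore i.i.d.\ structure and the mean value theorem under the weaker boundedness condition of Assumption~\ref{assp:xbeta} in place of the global $\sup_\beta\|\beta\|_1$ bound, which is exactly the decomposition (score control, compatibility transfer, two-stage localization) you describe. Your outline is correct and consistent with the paper's intended argument.
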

\begin{lemma}\label{lem:tj1}
    Under Assumptions \ref{assp:xbounded} - \ref{assp:lambda0j}, we have for every $1\le j\le p$,

    \begin{eqnarray*}
        &&\|\widehat{\Theta}_j-\Theta_j\|_1=\mathcal{O}_P(\lambda s_j^{3/2}\vee \lambda \sqrt{s_0s_j}),\qquad \|\widehat{\Theta}_j-\Theta_j\|_2=\mathcal{O}_P( \lambda \sqrt{s_0} \vee \lambda s_j),%=o_P(1/\sqrt{\log(p)}),
    \end{eqnarray*}
    and
        \begin{equation*}
        |\widehat{\tau}_{\widehat{\beta},j}^2-\tau_{\beta_0,j}^2|=\mathcal{O}_P(s_j\sqrt{\log p/n}\vee\lambda \sqrt{s_0}).
        \end{equation*}
    Moreover,
    \begin{eqnarray*}
        &&|\widehat{\Theta}_j^T\Sigma_{\beta_0}\widehat{\Theta}_j-\Theta_{jj}|\leq (\|\Sigma_{\beta_0}\|_{\infty}\|\widehat{\Theta}_j-\Theta_j\|_1^2)\wedge (\Lambda_{\max}^2\|\widehat{\Theta}_j-\Theta_j\|_2^2)+2|\widehat{\tau}_{\widehat{\beta},j}^2-\tau_{\beta_0,j}^2|,
    \end{eqnarray*}
where $\Lambda_{\max}^2$ is the largest eigenvalue of $\Sigma_{\beta_0}$.
\end{lemma}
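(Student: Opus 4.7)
The plan is to adapt the nodewise Lasso analysis of \cite{van:2014}, with the complication that $\widehat{\Sigma}=\ddot{l}_n(\widehat{\beta})$ is the Hessian of the (non-i.i.d., non-Lipschitz) log partial likelihood at the Lasso estimate, while the target $\Sigma_{\beta_0}=\mathbb{E}\ddot{\widetilde{l}}_n(\beta_0)$ is built from the intermediate function at the pseudo-true parameter. The bridge between the two is constructed in two steps: a plug-in step replacing $\widehat{\beta}$ by $\beta_0$, controlled via Lemma~\ref{lemma:betaconsis} and an entrywise mean-value expansion of $\ddot{l}_n(\cdot)$; and a concentration step replacing the empirical moments $\widehat{\mu}_r$ by $\mu_r$, for which Assumptions~\ref{assp:xbounded}, \ref{assp:xbeta} and~\ref{assp:stoptime} guarantee the uniform boundedness needed for Bernstein-type inequalities with a union bound over the $p^2$ entries.

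The central preliminary is an element-wise Gram matrix bound
\begin{equation*}
\|\widehat{\Sigma}-\Sigma_{\beta_0}\|_{\infty}=\mathcal{O}_P\bigl(\sqrt{\log p/n}\vee \lambda s_0\bigr),
\end{equation*}
obtained from the decomposition $\widehat{\Sigma}-\Sigma_{\beta_0}=(\ddot{l}_n(\widehat{\beta})-\ddot{l}_n(\beta_0))+(\ddot{l}_n(\beta_0)-\ddot{\widetilde{l}}_n(\beta_0))+(\ddot{\widetilde{l}}_n(\beta_0)-\Sigma_{\beta_0})$. The first piece is handled entrywise by the mean-value theorem, using that $\bm{X}\bar{\beta}$ remains bounded along the segment between $\widehat{\beta}$ and $\beta_0$ by Assumption~\ref{assp:xbeta} and Lemma~\ref{lemma:betaconsis}; the remaining two pieces are concentrations of i.i.d. sums of bounded summands and yield the $\sqrt{\log p/n}$ rate.

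To bound $\widehat{\gamma}_j-\gamma_{\beta_0,j}$ I would use the KKT/basic inequality for the nodewise Lasso~(\ref{nodewiseLasso}). Since $\gamma_{\beta_0,j}$ is the population least-squares coefficient in the $\Sigma_{\beta_0}$-geometry, the identity $\Sigma_{\beta_0,-j,-j}\gamma_{\beta_0,j}=\Sigma_{\beta_0,-j,j}$ holds exactly, so the empirical ``score'' $\widehat{\Sigma}_{-j,j}-\widehat{\Sigma}_{-j,-j}\gamma_{\beta_0,j}$ has sup-norm controlled by the Gram bound times $\|\gamma_{\beta_0,j}\|_1=\mathcal{O}(\sqrt{s_j})$ (using Assumption~\ref{assp:eigen}). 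On the event that $\lambda_j$ dominates twice this score, a restricted-eigenvalue argument (again from Assumption~\ref{assp:eigen}) yields $\|\widehat{\gamma}_j-\gamma_{\beta_0,j}\|_1=\mathcal{O}_P(\lambda_j s_j)$ and a matching $\ell_2$ bound $\mathcal{O}_P(\lambda_j \sqrt{s_j})$, which translate into the stated bounds on $\widehat{\Theta}_j-\Theta_j$ via the identity $\Theta_{j,-j}=-\tau_{\beta_0,j}^{-2}\gamma_{\beta_0,j}$ and its empirical counterpart. The extra $\lambda\sqrt{s_0 s_j}$ and $\lambda\sqrt{s_0}$ summands reflect the $\lambda s_0$ contribution of the Gram bound, picked up when propagating through the normalization $\widehat{\tau}_{\widehat{\beta},j}^{-2}$ and through the perturbation of $\widehat\tau^2$ itself.

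For $\widehat{\tau}_{\widehat{\beta},j}^2-\tau_{\beta_0,j}^2$ I would decompose
\begin{equation*}
\widehat{\tau}_{\widehat{\beta},j}^2-\tau_{\beta_0,j}^2=(\widehat{\Sigma}_{jj}-\Sigma_{\beta_0,jj})-(\widehat{\Sigma}_{j,-j}-\Sigma_{\beta_0,j,-j})\gamma_{\beta_0,j}-\widehat{\Sigma}_{j,-j}(\widehat{\gamma}_j-\gamma_{\beta_0,j}),
\end{equation*}
controlling the first two pieces by the Gram bound (with $\|\gamma_{\beta_0,j}\|_1=\mathcal{O}(\sqrt{s_j})$) and the last by Hölder with the $\ell_1$-oracle bound on $\widehat{\gamma}_j-\gamma_{\beta_0,j}$, which gives the claimed $s_j\sqrt{\log p/n}\vee \lambda\sqrt{s_0}$ rate. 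The final display follows from the expansion
\begin{equation*}
\widehat{\Theta}_j^T\Sigma_{\beta_0}\widehat{\Theta}_j-\Theta_{jj}=(\widehat{\Theta}_j-\Theta_j)^T\Sigma_{\beta_0}(\widehat{\Theta}_j-\Theta_j)+2\Theta_j^T\Sigma_{\beta_0}(\widehat{\Theta}_j-\Theta_j),
\end{equation*}
by using $\Sigma_{\beta_0}\Theta_j=e_j$ to rewrite the cross term as $2(\widehat{\Theta}_{jj}-\Theta_{jj})=2(\widehat{\tau}_{\widehat{\beta},j}^{-2}-\tau_{\beta_0,j}^{-2})$, which is comparable to $2|\widehat{\tau}_{\widehat{\beta},j}^2-\tau_{\beta_0,j}^2|$ on the event where both $\tau$'s are bounded away from zero, and by bounding the quadratic form in $\widehat{\Theta}_j-\Theta_j$ by the minimum of $\|\Sigma_{\beta_0}\|_\infty\|\widehat{\Theta}_j-\Theta_j\|_1^2$ (Hölder) and $\Lambda_{\max}\|\widehat{\Theta}_j-\Theta_j\|_2^2$ (Rayleigh quotient). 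I expect the main obstacle to lie in the first step: the plug-in piece $\ddot{l}_n(\widehat{\beta})-\ddot{l}_n(\beta_0)$ requires Assumption~\ref{assp:xbeta} to extend along the entire segment between $\widehat{\beta}$ and $\beta_0$, and, as the authors emphasize, the fact that $\gamma_{\beta_0,j}$ is not the minimizer of any natural empirical criterion is precisely what forces the Gram-bound route above rather than the direct empirical-process argument of \cite{van:2014}.
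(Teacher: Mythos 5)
Your overall architecture (plug-in step from $\widehat{\beta}$ to $\beta_0$ plus concentration of $\widehat{\mu}_r$ around $\mu_r$, the basic inequality for the nodewise Lasso, the three-term decomposition of $\widehat{\tau}_{\widehat{\beta},j}^2-\tau_{\beta_0,j}^2$, and the final display via $\Sigma_{\beta_0}\Theta_j=e_j$ together with the H\"older/Rayleigh bound on the quadratic form) matches the paper's, and the last display is handled exactly as in the paper. But there is a genuine gap in the step that drives everything else: your control of the nodewise score. You bound $\|\widehat{\Sigma}_{-j,j}-\widehat{\Sigma}_{-j,-j}\gamma_{\beta_0,j}\|_{\infty}$ by $\|\widehat{\Sigma}-\Sigma_{\beta_0}\|_{\infty}(1+\|\gamma_{\beta_0,j}\|_1)$ using the population normal equations and H\"older. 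Since only $\|\gamma_{\beta_0,j}\|_1=\mathcal{O}(\sqrt{s_j})$ is available (Assumption~\ref{assp:eigen} gives an $\ell_2$ bound, and $s_j$ is allowed to grow under Assumption~\ref{assp:s0sj}), this yields a score of order at least $\sqrt{s_j\log p/n}$ — and in fact of order $\lambda s_0\sqrt{s_j}$ with your cruder $\mathcal{O}_P(\lambda s_0)$ plug-in piece, since the mean-value bound through $\|\widehat{\beta}-\beta_0\|_1$ dominates $\sqrt{\log p/n}$. Either way the score is \emph{not} dominated by $\lambda_j\asymp\sqrt{\log p/n}$ (Assumption~\ref{assp:lambda0j}) once $s_j\to\infty$, so the event on which you invoke the restricted-eigenvalue/cone argument does not have probability tending to one, and the claimed rates $\mathcal{O}_P(\lambda_j s_j)$ and $\mathcal{O}_P(\lambda_j\sqrt{s_j})$ for $\widehat{\gamma}_j-\gamma_{\beta_0,j}$ do not follow.

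The paper avoids both losses. First, the plug-in piece $\ddot{l}_n(\widehat{\beta})-\ddot{l}_n(\beta_0)$ is bounded in sup-norm through the prediction norm $\|\mathbf{X}(\widehat{\beta}-\beta_0)\|^2/n=\mathcal{O}_P(\lambda^2 s_0)$ rather than through $\|\widehat{\beta}-\beta_0\|_1$, giving $\mathcal{O}_P(\lambda^2 s_0\sqrt{\log p})=o_P(\sqrt{\log p/n})$. Second, and more importantly, the score is not routed through the Gram matrix at all: it is written as the inner product of the population residual $\eta_{\beta_0,j}=A_{\beta_0,j}-A_{\beta_0,-j}\gamma_{\beta_0,j}$ with the columns of $A_{\widehat{\beta},-j}$ (the event $\mathcal{E}_n=\{2\|\eta_{\beta_0,j}^TB\widetilde{\Delta}A_{\widehat{\beta},-j}\|_{\infty}/n^2\le\lambda_j\}$), and concentrates at rate $\sqrt{\log p/n}$ \emph{without} any $\|\gamma_{\beta_0,j}\|_1$ factor because $\mathbb{E}\{D_{\beta_0,-j}^T(D_{\beta_0,j}-D_{\beta_0,-j}\gamma_{\beta_0,j})\}=0$ centers the summands; the $\widehat{\beta}$-versus-$\beta_0$ discrepancy then enters the basic inequality \emph{additively} as an $\mathcal{O}_P(\lambda^2 s_0)$ remainder (this is the source of the $\lambda^2 s_0/\lambda_j$ and $\lambda\sqrt{s_0}$ terms), not multiplicatively through the score. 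To repair your proof you would need to replace the Gram-bound/H\"older step by this residual-based concentration argument, which is precisely the point where the intermediate function $D_{\beta_0}$ and the fact that $\gamma_{\beta_0,j}$ minimizes $\mathbb{E}(D_{\beta_0,j}-D_{\beta_0,-j}\gamma)^2$ (rather than any empirical criterion) must be used.
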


Lemma~\ref{lemma:scorenormal} shows the asymptotic normality of the score statistic $\dot{l}_n(\beta_0)$ under high dimensional setting, which is similar to \cite{lin:1989} for any fixed $p$.

\begin{lemma}\label{lemma:scorenormal}
      Under Assumptions \ref{assp:xbounded} - \ref{assp:lambda0j}, we have
    $$
    \frac{\sqrt{n}\widehat{\Theta}_j^T \dot{l}_n(\beta_0)}{\sqrt{\widehat{\Theta}_j^T \mathbb{E}\left\{n^{-1}\sum_{i=1}^n v_i(\beta_0)^{\otimes 2}\right\}\widehat{\Theta}_j}}
    $$
    converges weakly to $\mathcal{N}(0,1)$, where
    $$
    v_i(\beta)=\Delta_i\bigg\{{\bm X_i}-\frac{\mu_1(Y_i;\beta)}{\mu_0(Y_i;\beta)}\bigg\}-\int_0^{\infty}\frac{1(Y_i\geq t)\exp\{{\bm X_i}^T\beta\}}{\mu_0(t;\beta)}\bigg\{{\bm X_i}-\frac{\mu_1(Y_i;\beta)}{\mu_0(Y_i;\beta)}\bigg\}d\widetilde{F}(t),
    $$
    and
    $\widetilde{F}(t)=\mathbb{E} 1(Y \leq t, \Delta=1)$.
%   $\widetilde{F}(t)=\mathbb{E}\{n^{-1}\sum_{k=1}^n 1(Y_k \leq t, \Delta_k=1)\}$.
\end{lemma}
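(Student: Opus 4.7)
The plan is a three-step reduction: first I would expand the score $\sqrt{n}\,\dot{l}_n(\beta_0)$ as an i.i.d.\ sum of the influence functions $v_i(\beta_0)$ plus an $\ell_\infty$-negligible remainder; second, replace $\widehat{\Theta}_j$ by the deterministic $\Theta_j$ using Lemma~\ref{lem:tj1}; third, apply the classical Lindeberg CLT to the resulting scalar i.i.d.\ sum and invoke Slutsky's theorem to swap in the estimated variance.

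For the first step, writing $N_i(t)=1(Y_i\leq t,\Delta_i=1)$ I would rewrite
$$
\sqrt{n}\,\dot{l}_n(\beta_0)=-\frac{1}{\sqrt n}\sum_{i=1}^n\int_0^\tau\left\{{\bm X_i}-\frac{\widehat\mu_1(t;\beta_0)}{\widehat\mu_0(t;\beta_0)}\right\}dN_i(t),
$$
and use a coordinate-wise Hoeffding bound, valid because Assumptions~\ref{assp:xbounded}--\ref{assp:xbeta} make the summands bounded and Assumption~\ref{assp:stoptime} bounds $\mu_0$ away from zero, to replace $\widehat\mu_r$ by $\mu_r$ uniformly in $t\in[0,\tau]$ at rate $\mathcal{O}_P(\sqrt{\log p/n})$. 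Exchanging integral and sum and replacing the empirical counting measure by $d\widetilde{F}$ then yields, essentially as in \cite{lin:1989}, the stochastic expansion
$$
\sqrt n\,\dot l_n(\beta_0)=-\frac{1}{\sqrt n}\sum_{i=1}^n v_i(\beta_0)+\eta_n,\qquad \|\eta_n\|_\infty=o_P(1/\sqrt{\log p}).
$$
Each coordinate of $v_i(\beta_0)$ is bounded and has mean zero, the latter following from the defining equation (\ref{pseudo}) of $\beta_0$.

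For the second and third steps, H\"older's inequality gives
$$
\bigl|\widehat\Theta_j^T\sqrt n\,\dot l_n(\beta_0)-\Theta_j^T\bigl(n^{-1/2}\textstyle\sum_i v_i(\beta_0)\bigr)\bigr|\le\|\widehat\Theta_j\|_1\|\eta_n\|_\infty+\|\widehat\Theta_j-\Theta_j\|_1\bigl\|n^{-1/2}\textstyle\sum_i v_i(\beta_0)\bigr\|_\infty.
$$
A coordinate-wise Hoeffding bound yields $\|n^{-1/2}\sum_i v_i(\beta_0)\|_\infty=\mathcal{O}_P(\sqrt{\log p})$, and Lemma~\ref{lem:tj1} combined with Assumptions~\ref{assp:s0sj} and \ref{assp:lambda0j} gives $\|\widehat\Theta_j-\Theta_j\|_1\sqrt{\log p}=o_P(1)$; since $\|\widehat\Theta_j\|_1=\mathcal{O}_P(\sqrt{s_j})$ (via $\|\Theta_j\|_1\le\sqrt{s_j+1}\|\Theta_j\|_2$ and Assumption~\ref{assp:eigen}), both remainders are $o_P(1)$. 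The scalars $W_{ni}:=\Theta_j^T v_i(\beta_0)$ are then i.i.d., mean zero, almost surely bounded by $\mathcal{O}(\sqrt{s_j})$, with variance $\sigma_n^2:=\Theta_j^T\mathbb{E}\{v_i(\beta_0)^{\otimes 2}\}\Theta_j$ bounded away from $0$ and $\infty$ by Assumption~\ref{assp:eigen}; because $\sqrt{s_j}=o(\sqrt n)$ under Assumption~\ref{assp:s0sj}, Lindeberg's condition is immediate and $n^{-1/2}\sum_i W_{ni}/\sigma_n\Rightarrow\mathcal{N}(0,1)$. A further application of Lemma~\ref{lem:tj1} yields $\widehat\Theta_j^T\mathbb{E}\{n^{-1}\sum_i v_i(\beta_0)^{\otimes 2}\}\widehat\Theta_j=\sigma_n^2+o_P(1)$, and Slutsky's theorem closes the argument.

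I expect the main difficulty to lie in the first step: producing a remainder $\eta_n$ whose $\ell_\infty$ norm is of order $o_P(1/\sqrt{\log p})$ \emph{uniformly} over all $p$ coordinates. This requires coupling the counting-process martingale decomposition of \cite{lin:1989} with a uniform empirical-process bound on $\widehat\mu_r-\mu_r$ that picks up only a $\sqrt{\log p}$ factor, while simultaneously using Assumption~\ref{assp:stoptime} to keep $\widehat\mu_0$ bounded below; controlling the separate stochastic orders of the cross-terms coming from the expansion of $\widehat\mu_1/\widehat\mu_0-\mu_1/\mu_0$ is the delicate aspect.
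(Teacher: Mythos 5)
Your proposal follows essentially the same route as the paper: reduce to the projected score, control the substitution error $(\widehat{\Theta}_j-\Theta_j)^T\dot{l}_n(\beta_0)$ by H\"older's inequality together with Lemma~\ref{lem:tj1} and a maximal inequality for the $\ell_\infty$-norm of the score, and then match the two variance quadratic forms via Lemma 6.1 of \cite{van:2014}. The one place you diverge is that the paper simply cites \cite{lin:1989} for the linearization $\sqrt{n}\,\dot{l}_n(\beta_0)=-n^{-1/2}\sum_i v_i(\beta_0)+o_P(1)$ and the resulting normality, whereas you re-derive the expansion with a uniform $\ell_\infty$ remainder and apply Lindeberg to the scalar sum $\Theta_j^T v_i(\beta_0)$; this is more self-contained and is arguably needed here, since Lin's result is a fixed-$p$ statement and the functional $\Theta_j$ has growing support.

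There is one quantitative slip in your step two. You bound the linearization error by $\|\widehat{\Theta}_j\|_1\|\eta_n\|_\infty$ with $\|\widehat{\Theta}_j\|_1=\mathcal{O}_P(\sqrt{s_j})$ and $\|\eta_n\|_\infty=o_P(1/\sqrt{\log p})$, and assert the product is $o_P(1)$. It is not in general: Assumption~\ref{assp:s0sj} permits $s_j\gg\log p$, in which case $\sqrt{s_j}\cdot o_P(1/\sqrt{\log p})$ can diverge. The gap is repairable because the remainder you actually produce is better than what you claim: the non-i.i.d.\ part of the expansion is quadratic in $\widehat{\mu}_r-\mu_r$, which is uniformly $\mathcal{O}_P(\sqrt{\log p/n})$ by the bracketing/maximal inequalities used elsewhere in the paper, so $\|\eta_n\|_\infty=\mathcal{O}_P(\log p/\sqrt{n})$ and $\sqrt{s_j}\log p/\sqrt{n}=o(1)$ does follow from $s_j=o(\{\sqrt{n}/\log p\}^{2/3})$. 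You should carry that sharper rate rather than the $o_P(1/\sqrt{\log p})$ rate, which is calibrated for pairing with $\|\widehat{\Theta}_j-\Theta_j\|_1$ (where it is the right target) but not with $\|\widehat{\Theta}_j\|_1$. A second, more minor point: your claim that $\Theta_j^T\mathbb{E}\{v_i(\beta_0)^{\otimes2}\}\Theta_j$ is bounded away from zero does not follow from Assumption~\ref{assp:eigen}, which controls $\Sigma_{\beta_0}$ rather than the robust ``meat'' matrix; the paper is equally silent on this, but it is an additional non-degeneracy condition that the Lindeberg/Slutsky step tacitly uses.
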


From Lemmas \ref{lemma:betaconsis} -- \ref{lemma:scorenormal}, we obtain our main results on the asymptotic normality of $\widehat b_j$. In particular, the asymptotic variance formula (\ref{var:mis}) in Theorem~\ref{thm:normalthm} (also used in \cite{lin:1989}  for low dimensional case) is robust in the sense that it can be applied irrespective whether the model is correct or not, while (\ref{var:correct}) in Corollary~\ref{cor:nomis} only holds for correctly specified models.

\begin{theorem}{\label{thm:normalthm}}
     Under Assumptions \ref{assp:xbounded} - \ref{assp:lambda0j}, we have for every $1\le j\le p$,
    \begin{equation*}
    \sqrt{n}\left(\widehat{b}_j-\beta_{0j}\right)/\widehat{\sigma}_j=V_j+o_P(1),
    \end{equation*}
    where $V_j$ converges weakly to $\mathcal{N}(0,1)$ and
    \begin{equation}\label{var:mis}
    \widehat{\sigma}_j^2=\widehat{\Theta}_j^T\left\{n^{-1}\sum_{i=1}^n \widehat{v}_i(\beta_0)^{\otimes 2}\right\} \widehat{\Theta}_j,
    \end{equation}
    with $$\widehat{v}_i(\beta)=\Delta_i\bigg\{{\bm X_i}-\frac{\widehat{\mu}_1(Y_i;\beta)}{\widehat{\mu}_0(Y_i;\beta)}\bigg\}-\sum_{k=1}^n\frac{\Delta_k 1(Y_i\geq Y_k)\exp\{{\bm X_i}^T \beta\}}{n\widehat{\mu}_0(Y_k; \beta)}\bigg\{{\bm X_i}-\frac{\widehat{\mu}_1(Y_k;\beta)}{\widehat{\mu}_0(Y_k;\beta)}\bigg\}.$$
\end{theorem}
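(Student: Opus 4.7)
My plan is to adapt the standard de-sparsified Lasso analysis to the Cox partial likelihood, keeping in mind that its summands are neither i.i.d.\ nor globally Lipschitz. The starting point is the identity, obtained from the fundamental theorem of calculus applied to $\dot{l}_n$ around $\beta_0$,
\[
\dot{l}_n(\widehat\beta) = \dot{l}_n(\beta_0) + \widehat\Sigma(\widehat\beta - \beta_0) + R_n,
\]
where $R_n := \int_0^1 \{\ddot{l}_n(\beta_0 + t(\widehat\beta - \beta_0)) - \widehat\Sigma\}(\widehat\beta - \beta_0)\,dt$. Substituting into $\widehat b = \widehat\beta - \widehat\Theta\dot l_n(\widehat\beta)$ yields the three-term decomposition
\[
\sqrt{n}(\widehat b_j - \beta_{0j}) = -\sqrt{n}\,\widehat\Theta_j^T \dot{l}_n(\beta_0) \;+\; \sqrt{n}\, e_j^T(I - \widehat\Theta\widehat\Sigma)(\widehat\beta - \beta_0) \;-\; \sqrt{n}\,\widehat\Theta_j^T R_n,
\]
with $e_j$ the $j$-th standard basis vector. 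Lemma~\ref{lemma:scorenormal} supplies the asymptotic normality of the first term once the correct normalization is identified; the plan is to show the remaining two are $o_P(1)$ and then to match the variance.

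The bias-correction term is handled by the now-standard KKT argument for the nodewise Lasso (\ref{nodewiseLasso}): combining $\|\widehat\Theta\widehat\Sigma - I\|_\infty = \mathcal{O}_P(\max_j \lambda_j/\widehat\tau_j^2) = \mathcal{O}_P(\lambda)$ with Lemma~\ref{lemma:betaconsis} gives $|e_j^T(I-\widehat\Theta\widehat\Sigma)(\widehat\beta-\beta_0)| = \mathcal{O}_P(\lambda^2 s_0) = o_P(n^{-1/2})$ under Assumption~\ref{assp:s0sj}. The Taylor-remainder term is the delicate piece. I would bound it by $\|\widehat\Theta_j\|_1 \|R_n\|_\infty$, controlling $\|\widehat\Theta_j\|_1 \leq \|\Theta_j\|_1 + \|\widehat\Theta_j-\Theta_j\|_1 = \mathcal{O}_P(\sqrt{s_j})$ using Lemma~\ref{lem:tj1}. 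The crux is $\|R_n\|_\infty$: every entry of $\ddot l_n(\beta) - \widehat\Sigma$ contains ratios $\widehat\mu_r(t;\beta)/\widehat\mu_0(t;\beta)$ whose $\exp(\mathbf X^T\beta)$ factors are not globally Lipschitz in $\beta$. Assumption~\ref{assp:xbeta} resolves this: since $\|\mathbf X\beta_0\|_\infty \leq K_2$ and Lemma~\ref{lemma:betaconsis} combined with Assumption~\ref{assp:xbounded} gives $\|\mathbf X(\widehat\beta - \beta_0)\|_\infty = o_P(1)$, every relevant $\mathbf X^T\beta$ stays in a compact set on which $\exp$ is Lipschitz; a mean value argument then bounds $\|R_n\|_\infty$ by a product of $\|\widehat\beta-\beta_0\|$ norms that, after multiplying by $\|\widehat\Theta_j\|_1$, is $o_P(n^{-1/2})$ under Assumption~\ref{assp:s0sj}.

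The final ingredient is that $\widehat\sigma_j^2$ is a consistent estimate of the denominator in Lemma~\ref{lemma:scorenormal}. I would decompose $\widehat\sigma_j^2 - \Theta_j^T \mathbb E\{n^{-1}\sum_i v_i(\beta_0)^{\otimes 2}\}\Theta_j$ into a piece contributed by $\widehat\Theta_j - \Theta_j$, bounded via Lemma~\ref{lem:tj1}, and a piece contributed by replacing the empirical $\widehat\mu_r(t;\widehat\beta)$ by $\mu_r(t;\beta_0)$ and sample averages by expectations. The latter is handled by a uniform law of large numbers over $t \in [0,\tau]$, relying on Assumptions~\ref{assp:xbounded}, \ref{assp:xbeta} and \ref{assp:stoptime} to keep $\widehat\mu_0$ bounded away from zero. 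Combining everything produces the stated decomposition with $V_j := -\sqrt{n}\,\widehat\Theta_j^T \dot l_n(\beta_0)/\widehat\sigma_j \Rightarrow \mathcal{N}(0,1)$.

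The main obstacle is the Taylor-remainder step: unlike in the linear or GLM settings of \cite{van:2014}, the summands of $\ddot l_n$ are neither i.i.d.\ nor Lipschitz in $\beta$, so one cannot invoke concentration of Lipschitz i.i.d.\ functionals of the design off the shelf. The two key tricks, both advertised by the authors, are (i) localizing via Assumption~\ref{assp:xbeta} to restore Lipschitz behaviour of the exponentials on a bounded set, and (ii) comparing $\ddot l_n$ against the i.i.d.\ intermediate Hessian $\ddot{\widetilde l}_n$ to separate the non-i.i.d.\ from the non-Lipschitz aspects of the analysis.
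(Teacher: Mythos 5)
Your overall architecture coincides with the paper's: the same three-term decomposition of $\sqrt{n}(\widehat{b}_j-\beta_{0j})$ into the linearized score $-\widehat{\Theta}_j^T\dot{l}_n(\beta_0)$, the KKT bias term $(\widehat{\Theta}_j^T\ddot{l}_n(\widehat{\beta})-e_j^T)(\widehat{\beta}-\beta_0)$, and a Taylor remainder; the same treatment of the KKT term via $\mathcal{O}_P(\lambda)\cdot\|\widehat{\beta}-\beta_0\|_1=\mathcal{O}_P(\lambda^2 s_0)=o_P(n^{-1/2})$; and the same variance-consistency plan (split off the $\widehat{\Theta}_j-\Theta_j$ contribution via Lemma~\ref{lem:tj1}, then a uniform law of large numbers for the $\widehat{\mu}_r$ over $t\in[0,\tau]$, feeding into Lemma~\ref{lemma:scorenormal}). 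Your appeal to Assumption~\ref{assp:xbeta} plus a mean value argument to restore Lipschitz behaviour of the exponentials is also exactly the paper's device.

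There is, however, one concrete quantitative gap in your treatment of the Taylor remainder. Bounding $|\widehat{\Theta}_j^T R_n|\le\|\widehat{\Theta}_j\|_1\|R_n\|_{\infty}$ with $\|\widehat{\Theta}_j\|_1=\mathcal{O}_P(\sqrt{s_j})$ costs a factor $\sqrt{s_j}$ that Assumption~\ref{assp:s0sj} does not absorb: the mean-value/quadratic expansion gives $\|R_n\|_{\infty}=\mathcal{O}_P(n^{-1}\|{\bf X}(\widehat{\beta}-\beta_0)\|^2)=\mathcal{O}_P(\lambda^2 s_0)=o_P(n^{-1/2})$, so your product is only $o_P(\sqrt{s_j}\,n^{-1/2})$, which diverges relative to $n^{-1/2}$ whenever $s_j\rightarrow\infty$ (Assumption~\ref{assp:s0sj} permits $s_j$ as large as $(\sqrt{n}/\log p)^{2/3}$ up to little-o). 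The paper sidesteps this by never pulling $\|\widehat{\Theta}_j\|_1$ out of the remainder: it keeps $\widehat{\Theta}_j$ inside the empirical averages and uses the uniform bound $\|\widehat{\Theta}_j^T{\bm X}\|_{\infty}=\mathcal{O}_P(1)$ (tied to Assumption~\ref{assp:agamma} and Lemma~\ref{lem:tj1}), which yields $|Rem_1|\le\mathcal{O}_P(1)\cdot n^{-1}\sum_{k=1}^n[{\bm X_k}^T(\widehat{\beta}-\beta_0)]^2=\mathcal{O}_P(\lambda^2 s_0)$ with no $s_j$ dependence. You need this refinement (or a strengthened sparsity condition on $s_0$ and $s_j$) for the remainder step to close; the rest of your plan then goes through as in the paper.
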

\begin{corollary}\label{cor:nomis}
If the working model (\ref{cox1}) is correctly specified, we have for every $1\le j\le p$,
\begin{equation*}
\sqrt{n}\left(\widehat{b}_j-\beta_{0j}\right)/\widetilde{\sigma}_j=V_j+o_P(1),
\end{equation*}
where $V_j$ converges weakly to $\mathcal{N}(0,1)$ and
\begin{equation}\label{var:correct}
 \widetilde{\sigma}_j^2=\widehat{\Theta}_j^T\ddot{l}_n(\widehat{\beta})\widehat{\Theta}_j.
\end{equation}

\end{corollary}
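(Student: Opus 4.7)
The plan is to reduce Corollary~\ref{cor:nomis} to Theorem~\ref{thm:normalthm}. Writing the conclusion of Theorem~\ref{thm:normalthm} as $\sqrt{n}(\widehat{b}_j - \beta_{0j})/\widehat{\sigma}_j = V_j + o_P(1)$, we have
\[
\sqrt{n}(\widehat{b}_j - \beta_{0j})/\widetilde{\sigma}_j \;=\; (\widehat{\sigma}_j/\widetilde{\sigma}_j)\bigl\{V_j + o_P(1)\bigr\},
\]
so Slutsky's theorem delivers the corollary once we establish $\widehat{\sigma}_j/\widetilde{\sigma}_j \to 1$ in probability. The proof of Theorem~\ref{thm:normalthm} already shows that $\widehat{\sigma}_j^2$ converges in probability to a positive constant (the ``sandwich'' variance $\Theta_j^T \mathbb{E}\{n^{-1}\sum_i v_i(\beta_0)^{\otimes 2}\}\Theta_j$), so the entire task reduces to showing that $\widetilde{\sigma}_j^2 = \widehat{\Theta}_j^T\ddot{l}_n(\widehat{\beta})\widehat{\Theta}_j$ has the same in-probability limit when the working model is correctly specified.

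The key structural ingredient supplied by correct specification is the classical information identity for the Cox partial likelihood (cf.\ \cite{andersen:1982}, \cite{lin:1989}): the martingale decomposition of the score gives $\mathbb{E}\bigl\{n^{-1}\sum_{i=1}^n v_i(\beta_0)^{\otimes 2}\bigr\} = \Sigma_{\beta_0}$, so the sandwich variance collapses to $\Theta_j^T \Sigma_{\beta_0}\Theta_j = \Theta_{jj}$. It therefore suffices to prove $\widetilde{\sigma}_j^2 = \Theta_{jj}+o_P(1)$. I would decompose
\[
|\widetilde{\sigma}_j^2 - \Theta_{jj}| \;\le\; \|\widehat{\Theta}_j\|_1^2\,\|\widehat{\Sigma} - \Sigma_{\beta_0}\|_\infty + \bigl|\widehat{\Theta}_j^T \Sigma_{\beta_0}\widehat{\Theta}_j - \Theta_{jj}\bigr|,
\]
writing $\widehat{\Sigma} = \ddot{l}_n(\widehat{\beta})$. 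The second summand is $o_P(1)$ directly from Lemma~\ref{lem:tj1}, and $\|\widehat{\Theta}_j\|_1 = \mathcal{O}_P(\sqrt{s_j})$ follows from Lemma~\ref{lem:tj1} together with $\|\Theta_j\|_1 \le \sqrt{s_j}\,\|\Theta_j\|_2 = \mathcal{O}(\sqrt{s_j})$ under Assumption~\ref{assp:eigen}.

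It remains to control $\|\widehat{\Sigma} - \Sigma_{\beta_0}\|_\infty$. I would split
\[
\widehat{\Sigma} - \Sigma_{\beta_0} \;=\; \bigl[\ddot{l}_n(\widehat{\beta}) - \ddot{l}_n(\beta_0)\bigr] + \bigl[\ddot{l}_n(\beta_0) - \ddot{\widetilde{l}}_n(\beta_0)\bigr] + \bigl[\ddot{\widetilde{l}}_n(\beta_0) - \Sigma_{\beta_0}\bigr].
\]
The first bracket is handled by a mean-value expansion in $\beta$, using Assumptions~\ref{assp:xbounded}--\ref{assp:xbeta} to keep the ratios $\widehat{\mu}_r(t;\beta)/\widehat{\mu}_0(t;\beta)$ bounded uniformly on $[0,\tau]$ (Assumption~\ref{assp:stoptime}) and invoking $\|\widehat{\beta}-\beta_0\|_1 = \mathcal{O}_P(\lambda s_0)$ from Lemma~\ref{lemma:betaconsis}. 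The last two brackets are entrywise $\mathcal{O}_P(\sqrt{\log p/n})$: the third is a coordinatewise average of i.i.d.\ bounded terms by Assumptions~\ref{assp:xbounded} and~\ref{assp:stoptime}, and the second follows from uniform convergence of $\widehat{\mu}_r$ to $\mu_r$, which is exactly where the paper's intermediate-function device is applied (since the summands of $\ddot{l}_n$ are neither i.i.d.\ nor Lipschitz). With Assumption~\ref{assp:s0sj} this yields $\|\widehat{\Theta}_j\|_1^2\,\|\widehat{\Sigma} - \Sigma_{\beta_0}\|_\infty = \mathcal{O}_P(s_j\sqrt{\log p/n}) = o_P(1)$, concluding the argument. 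The principal obstacle is precisely this last step: sharpening $\|\widehat{\Sigma} - \Sigma_{\beta_0}\|_\infty$ to the rate $\sqrt{\log p/n}$ requires carefully handling both the non-i.i.d.\ ratio structure in $\ddot{l}_n$ and the plug-in of $\widehat{\beta}$, which is exactly the reason the intermediate function $\widetilde{l}_n$ was introduced in the first place.
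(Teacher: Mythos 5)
Your reduction is the natural one, and in fact the paper gives no separate proof of Corollary~\ref{cor:nomis} at all: the intended argument is exactly what you write, namely the information identity $\mathbb{E}\{n^{-1}\sum_i v_i(\beta_0)^{\otimes 2}\}=\Sigma_{\beta_0}$ under correct specification (so the sandwich collapses to $\Theta_{jj}$) combined with consistency of the plug-in quantity $\widehat{\Theta}_j^T\ddot{l}_n(\widehat{\beta})\widehat{\Theta}_j$ for $\Theta_{jj}$, the latter being exactly the content of Lemma~\ref{lem:tj1} plus an entrywise bound on $\ddot{l}_n(\widehat{\beta})-\Sigma_{\beta_0}$. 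So the approach is correct and matches what the authors implicitly rely on.

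One piece of rate bookkeeping deserves care. For the bracket $\ddot{l}_n(\widehat{\beta})-\ddot{l}_n(\beta_0)$ you propose a mean-value expansion ``invoking $\|\widehat{\beta}-\beta_0\|_1=\mathcal{O}_P(\lambda s_0)$''; if you use the $\ell_1$ bound linearly you only get an entrywise rate $\mathcal{O}_P(\lambda s_0)$, and then $\|\widehat{\Theta}_j\|_1^2\,\|\widehat{\Sigma}-\Sigma_{\beta_0}\|_\infty=\mathcal{O}_P(s_j\,\lambda s_0)$, which under Assumption~\ref{assp:s0sj} need \emph{not} be $o_P(1)$ (e.g.\ $s_j\asymp (\sqrt{n}/\log p)^{2/3-\delta}$ and $s_0\asymp \sqrt{n}/(\log p)^{1+\delta}$ makes it diverge). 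To reach your claimed rate $\mathcal{O}_P(s_j\sqrt{\log p/n})=o_P(1)$ you must instead use the quadratic (prediction-norm) bound $\|\mathbf{X}(\widehat{\beta}-\beta_0)\|^2/n=\mathcal{O}_P(\lambda^2 s_0)$ from Lemma~\ref{lemma:betaconsis}, which is precisely how the paper bounds the analogous term in the proof of Lemma~\ref{lemma:gamma2}, obtaining $\|\ddot{l}_n(\widehat{\beta})-\ddot{l}_n(\beta_0)\|_\infty=\mathcal{O}_P(\lambda^2 s_0\sqrt{\log p})=o_P(\sqrt{\log p/n})$. With that substitution the remaining two brackets are indeed entrywise $\mathcal{O}_P(\sqrt{\log p/n})$ by the empirical-process bounds already established inside the proof of Lemma~\ref{lem:tj1}, and your argument goes through.
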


\section{Numerical study}
We conducted extensive simulations to investigate the finite sample performances of our high dimensional inference methods. The rows of $\bf{X}$ were drawn independently from $\mathcal{N}(0,\Sigma)$ with each element truncated by $[-3,3]$. Constant censoring time was generated to yield $15\%$ and $30\%$ censoring rates. The Lasso estimator $\widehat{\beta}$ was obtained with a tuning parameter $\lambda$ from $10$-fold cross-validation, while $\lambda_j$'s in nodewise Lasso were also chosen by $10$-fold cross-validation. We set $(n,p)=(100, 500)$ \yu{and $(n,p)=(200, 30)$ with $1000$ replications. Note that when $(n,p)=(200, 30)$, we compare our results with those derived from partial likelihood estimation method. All the simulations were done on Purdue University rice cluster. For the case $(n,p)=(100, 500)$, it took approximately 4 hours for $1000$ replications run on one node with two 10-core Intel Xeon-E5 processors (20 cores per node) and 64 GB of memory. For $(n,p)=(200,30)$, it took approximately 1 hour for $1000$ replications based on de-sparsified Lasso method. For the real example in section~\ref{sec:real}, it took approximately 6 hours. }

\subsection{Correctly specified Cox regression model}
Assume $\lambda(t|{\bm X})=\exp({\bm X}^T\beta)$ is the true hazard function with two different 
covariance matrices $\Sigma$:
%\lambda(t)
\begin{itemize}
    \item Independent: $\Sigma=I$ ,
    \item Equal Correlation: $\Sigma=\widetilde \Sigma_p$.\footnote{Here $\widetilde{\Sigma}_j$ for $1\le j\le p$ denotes a $j\times j$ matrix with diagonal elements 1 and off-diagonal elements 0.8.}

%   $\Sigma_{j,k} = 0.8$ for all $j\neq k$, and $\Sigma_{j,j}$=1 for all $j$. \textbf{use the notation $\widetilde{\Sigma}_p$ to denote the matrix with diagonal elements 1 and off-diagonal elements 0.8.}
\end{itemize}
The active set has either cardinality $s_0=|S_0|=3$ or $15$ with $S_0=\{1,2,\cdots,s_0\}$ and the regression coefficients were drawn from a fixed realization of $s_0$ i.i.d. uniform random variables on $[0,2]$. Denote $\mbox{CI}_j$ as a two-sided $95\%$ confidence interval.

In Table~\ref{tab:correct} and \ref{tab:correct2}, we report empirical versions of
\begin{align*}
&\mbox{Avgcov } S_0 = s_0^{-1} \sum_{j\in S_0} \mathbb{P}(\beta_{0j}\in \mbox{CI}_j), &&\mbox{Avglength } S_0 = s_0^{-1} \sum_{j\in S_0} \mbox{length}(\mbox{CI}_j),\\
&\mbox{Avgcov } S_0^c = (p-s_0)^{-1} \sum_{j\in S_0^c} \mathbb{P}(0\in \mbox{CI}_j), &&\mbox{Avglength } S_0^c = (p-s_0)^{-1} \sum_{j\in S_0^c} \mbox{length}( \mbox{CI}_j).
\end{align*}
It is demonstrated that the coverage probabilities are generally close to $95\%$. For active sets with a larger $s_0$, we observe that the confidence intervals are wider, especially for the equal correlation case. This might be because our high dimensional inference procedure is more suitable for very sparse regression. \yu{ When $n>p$, it can be seen that coverage probabilities of confidence intervals based on the de-sparsified Lasso method are mostly closer to the nominal $95\%$ level than those based on partial likelihood method.We further notice from Table~\ref{tab:correct2} that partial likelihood method does not work well for the `equal correlation' case. This indicates that partial likelihood method does not allow strong collinearity among the covariates. }

\begin{table}
\begin{center}
\caption{Average coverage probabilities and lengths of confidence
intervals at the $95\%$ nominal level based on $1000$ repetitions,
where $n = 100$ and $p=500$.} {\footnotesize \label{tab:correct}
    \begin{tabular}{lcccccccc}
    \hline
    & \multicolumn{4}{c}{Active set $S_0=\{1,2,3\}$} & \multicolumn{4}{c}{Active set $S_0=\{1,2,\cdots,15\}$}\\
    \hline
    & \multicolumn{2}{c}{Independent} & \multicolumn{2}{c}{Equal Correlation}& \multicolumn{2}{c}{Independent} & \multicolumn{2}{c}{Equal Correlation}\\
    % & Independent censoring time & \\
    Censoring Rate & 15\% & 30\% & 15\% & 30\% & 15\% & 30\% & 15\% & 30\%\\
    \hline
    Avgcov $S_0$ & 0.902 &  0.896 & 0.918 & 0.914& 0.930 & 0.952 & 0.915 & 0.903\\
    Avglength $S_0$ & 5.553 & 5.842 & 9.883 & 10.442& 5.467 & 5.982 & 12.477 &14.104\\
    Avgcov $S_0^c$ & 0.967 & 0.965 & 0.943 & 0.934& 0.998 & 0.998 & 0.946 & 0.925\\
    Avglength $S_0^c$ & 4.874 & 5.273 &7.396 & 7.972& 5.176 & 5.675 & 11.264&12.690\\
    \hline
    \end{tabular}
}
\end{center}
\end{table}

\begin{table}
	\begin{center}
		\caption{Average coverage probabilities and lengths of confidence
			intervals at the $95\%$ nominal level based on $1000$ repetitions,
			where $n = 200$ and $p=30$.} {\footnotesize \label{tab:correct2}
			\begin{tabular}{lcccccccc}
				\hline
				\multicolumn{9}{c} {De-sparsified Lasso Method}\\
				\hline
				& \multicolumn{4}{c}{Active set $S_0=\{1,2,3\}$} & \multicolumn{4}{c}{Active set $S_0=\{1,2,\cdots,15\}$}\\
				\hline
				& \multicolumn{2}{c}{Independent} & \multicolumn{2}{c}{Equal Correlation}& \multicolumn{2}{c}{Independent} & \multicolumn{2}{c}{Equal Correlation}\\
				% & Independent censoring time & \\
				Censoring Rate & 15\% & 30\% & 15\% & 30\% & 15\% & 30\% & 15\% & 30\%\\
				\hline
				Avgcov $S_0$ & 0.837 & 0.867  & 0.921 & 0.925 & 0.833 & 0.836 & 0.878 & 0.879\\
				Avglength $S_0$ & 0.400 & 0.445 & 0.754 & 0.855 & 0.508 & 0.551 & 0.841 & 0.947\\
				Avgcov $S_0^c$ & 0.949 & 0.952 & 0.949 & 0.956 & 0.940 & 0.937 & 0.927 & 0.921\\
				Avglength $S_0^c$ & 0.338 & 0.382 &0.757 & 0.865 & 0.344 & 0.388 & 0.791 & 0.899 \\
				\hline
				\hline
				\multicolumn{9}{c} {Partial Likelihood Method}\\
				\hline
				& \multicolumn{4}{c}{Active set $S_0=\{1,2,3\}$} & \multicolumn{4}{c}{Active set $S_0=\{1,2,\cdots,15\}$}\\
				\hline
				& \multicolumn{2}{c}{Independent} & \multicolumn{2}{c}{Equal Correlation}& \multicolumn{2}{c}{Independent} & \multicolumn{2}{c}{Equal Correlation}\\
				% & Independent censoring time & \\
				Censoring Rate & 15\% & 30\% & 15\% & 30\% & 15\% & 30\% & 15\% & 30\%\\
				\hline
				Avgcov $S_0$ & 0.823&0.809&0.230&0.401&0.831&0.811&0.229&0.397\\
				Avglength $S_0$ & 0.459&0.513&0.776&0.902&0.459&0.513&0.775&0.902\\
				Avgcov $S_0^c$ & 0.921&0.920&0.927&0.908&0.925&0.919&0.930&0.904\\
				Avglength $S_0^c$ & 0.367&0.414&0.776&0.896&0.367&0.414&0.776&0.895\\
				\hline
				\hline				
			\end{tabular}
		}
	\end{center}
\end{table}

\subsection{misspecified Cox regression model}
In this section, we consider misspecified models. In Tables \ref{tab:mistrue} and \ref{tab:mistrue2}, survival time $T$ was generated from $\lambda(t|{\bm X})=\exp\{X_1^2\}$, and the working model (\ref{cox1}) was used to fit the data in simulations. As explained in Example 1, the pseudo-true parameter $\beta_0=(0,\ldots,0)^T$. we calculated the average coverage probabilities $p^{-1}\sum_{1\le j\le p} \mathbb{P}(\beta_{0j}\in \mbox{CI}_j)$ and average lengths $p^{-1}\sum_{1\le j\le p} \mbox{length}(\mbox{CI}_j)$ by considering two covariance matrices:
\begin{itemize}
    \item Independent: $\Sigma=I$,
    \item Block Equal Correlation I: $\Sigma=\mbox{diag}(1,\widetilde \Sigma_{p-1})$.
\end{itemize}
The asymptotic variance estimates were calculated either from (\ref{var:mis}) (robust) or (\ref{var:correct}) (non-robust). Table~\ref{tab:mistrue} demonstrates that when robust variance estimate is used, the coverage probabilities are closer to the nominal level $95\%$ in comparison with the non-robust formula.

\begin{table}
    \begin{center}
        \caption{Average coverage probabilities and lengths of confidence intervals at the $95\%$ nominal level based on $1000$ repetitions, where $n = 100$ and $p=500$.}
        {\footnotesize \label{tab:mistrue}
            \begin{tabular}{ccccccccc}
                \hline
                & \multicolumn{4}{c}{Independent} & \multicolumn{4}{c}{Block Equal Correlation I}\\
                \hline
                & \multicolumn{2}{c}{15\%} & \multicolumn{2}{c}{30\%} & \multicolumn{2}{c}{15\%} & \multicolumn{2}{c}{30\%}\\
                &Robust & Non-Robust &Robust & Non-Robust &Robust & Non-Robust &Robust & Non-Robust\\
                \hline
                avgcov & 0.930 & 0.900 & 0.937 & 0.904 & 0.946 & 0.938 & 0.952 &0.938\\
                avglength & 1.498 & 1.368 &1.684  & 1.523 & 1.753 & 1.734 & 1.856 & 1.791\\
                \hline

            \end{tabular}
        }
    \end{center}
\end{table}

\begin{table}
	\begin{center}
		\caption{Average coverage probabilities and lengths of confidence intervals at the $95\%$ nominal level based on $1000$ repetitions, where $n = 200$ and $p=30$.}
		{\footnotesize \label{tab:mistrue2}
			\begin{tabular}{ccccccccc}
				\hline
				\multicolumn{9}{c} {De-sparsified Lasso Method}\\
				\hline
				& \multicolumn{4}{c}{Independent} & \multicolumn{4}{c}{Block Equal Correlation I}\\
				\hline
				& \multicolumn{2}{c}{15\%} & \multicolumn{2}{c}{30\%} & \multicolumn{2}{c}{15\%} & \multicolumn{2}{c}{30\%}\\
				&Robust & Non-Robust &Robust & Non-Robust &Robust & Non-Robust &Robust & Non-Robust\\
				\hline
				avgcov & 0.961 & 0.956 & 0.963 & 0.960 & 0.966 & 0.956 & 0.970 &0.961\\
				avglength & 0.360 & 0.343 &0.396  & 0.386 & 0.827 & 0.783 & 0.944 & 0.903\\
				\hline
				\hline
				\multicolumn{9}{c} {Partial Likelihood Method}\\	
				\hline
				& \multicolumn{4}{c}{Independent} & \multicolumn{4}{c}{Block Equal Correlation I}\\
				\hline
				& \multicolumn{2}{c}{15\%} & \multicolumn{2}{c}{30\%} & \multicolumn{2}{c}{15\%} & \multicolumn{2}{c}{30\%}\\
				&Robust & Non-Robust &Robust & Non-Robust &Robust & Non-Robust &Robust & Non-Robust\\
				\hline
				avgcov & 0.913 & 0.919 & 0.922 & 0.919 & 0.915 & 0.921 & 0.920 &0.919\\
				avglength & 0.344 & 0.347 &0.386  & 0.379 & 0.725 & 0.732 & 0.714 & 0.799\\
				\hline			
			\end{tabular}
		}
	\end{center}
\end{table}

Next, we test the null hypothesis $H_0$ that the failure time $T$ does not depend on $X_1$. When the working model (\ref{cox1}) is false, a valid test for $H_0$ based on $\widehat{b}_1$ \yu{and robust variance estimation method} is possible if $\beta_{01}=0$. One example for $\beta_{01}=0$ is that $X_1$ is symmetric about $0$ and independent of other covariates, and $X_1^2$ has an important effect on the true hazard function $\lambda_0(t|{\bm X})$. \yu{Note that the  true  model  need  not  take  an  exponential  regressionform,  and  neither  does  it  have  to  a  proportional  hazards  model.} In Tables \ref{tab:mis} and \ref{tab:mis2}, different true hazards with covariates satisfying these conditions were explored, under the following covariance matrices:
\begin{itemize}
    \item Independent: $\Sigma=I$ ,
    \item Block Equal Correlation II: $\Sigma=\mbox{diag}(1,\tilde{\Sigma}_2,\tilde{\Sigma}_{p-3})$.%, where $\tilde{\Sigma}_k$ denotes a $k\times k$ matrix with diagonal elements 1 and off-diagonal elements 0.8.
\end{itemize}
\yu{Row 1 of Tables \ref{tab:mis} and \ref{tab:mis2} is on the omission of relevant covariates from Cox models, rows 2-3 are on the misspecification of regression forms with possible omission of relevant covariates, and rows 4-5 are on nonproportional hazards models with possible omission of relevant covariates. In particular, row 2 is an additive hazards model and row 4 is an accelerated failure time model.}

Tables \ref{tab:mis} and \ref{tab:mis2} demonstrate that tests based on robust variance estimate give empirical sizes closer to 5\% than those non-robust cases. \yu{This is because test based on robust variance estimation method is asymptotically valid, whereas test based on non-robust variance estimation may not be.} When $n >p$, it is noted that our results based on de-sparsified method are comparable with those based on partial likelihood method.

\begin{table}
\begin{center}
\caption{Empirical Sizes for testing the effect of $X_1$ under the falsely assumed Cox model
    $\lambda(t|{\bm X}) =\lambda(t)\exp({\bm X}^T\beta)$ at the $5\%$ nominal level based on $1000$ repetitions, where $n = 100$ and $p=500$.}
\footnotesize{\label{tab:mis}
    \begin{tabular}{lcccccccc}
    \hline
    % & Independent censoring time & \\
    True Hazard Function&  \multicolumn{4}{c}{Independent} &  \multicolumn{4}{c}{Block Equal Correlation II}\\
    \hline
    & \multicolumn{2}{c}{15\%} & \multicolumn{2}{c}{30\%}   & \multicolumn{2}{c}{15\%} & \multicolumn{2}{c}{30\%}\\
    &Rob. & Non-R. &Rob. & Non-R. &Rob. & Non-R.&Rob. & Non-R. \\
    \hline
    {1. $\lambda_0(t|{\bm X})=\exp\{X_1^2+.5X_2+X_3\}$} & 0.049  & 0.080  & 0.049 & 0.063& 0.043 & 0.196 & 0.040 & 0.202 \\
    2. $ \lambda_0(t|{\bm X})=\{X_1^2+.5X_2+X_3\}+5$ & 0.049 & 0.083 & 0.044 & 0.074  & 0.056 & 0.126 & 0.049 & 0.110 \\
    3. $\lambda_0(t|{\bm X})=\log(X_1^2+.5X_2+X_3+6)$ & 0.042 & 0.063 & 0.045 & 0.083 & 0.069 & 0.108 & 0.062 & 0.131\\
    4. $\log T= -X_1^2-.5X_2-X_3+\phi$ &0.054 & 0.072 & 0.046 & 0.092 &0.043 & 0.223 & 0.060 & 0.250\\
    5. $T=\exp(-X_1^2-.5X_2-X_3)+\epsilon$& 0.054 & 0.096 & 0.046 & 0.116& 0.049 & 0.158 & 0.048 & 0.180\\
    \hline
	\end{tabular}
}
\end{center}
\end{table}

\begin{table}
	\begin{center}
		\caption{Empirical Sizes for testing the effect of $X_1$ under the falsely assumed Cox model
			$\lambda(t|{\bm X}) =\lambda(t)\exp({\bm X}^T\beta)$ at the $5\%$ nominal level based on $1000$ repetitions, where $n = 200$ and $p=30$.}
		\footnotesize{\label{tab:mis2}
			\begin{tabular}{lcccccccc}
				\hline
				\multicolumn{9}{c} {De-sparsified Lasso Method}\\
				\hline
				% & Independent censoring time & \\
				True Hazard Function&  \multicolumn{4}{c}{Independent} &  \multicolumn{4}{c}{Block Equal Correlation II}\\
				\hline
				& \multicolumn{2}{c}{15\%} & \multicolumn{2}{c}{30\%}   & \multicolumn{2}{c}{15\%} & \multicolumn{2}{c}{30\%}\\
				&Rob. & Non-R. &Rob. & Non-R. &Rob. & Non-R.&Rob. & Non-R. \\
				\hline
				{1. $\lambda_0(t|{\bm X})=\exp\{X_1^2+.5X_2+X_3\}$} & 0.071  & 0.127  & 0.074 & 0.124& 0.050 & 0.131 & 0.062 & 0.143 \\
				2. $\lambda_0(t|{\bm X})=\{X_1^2+.5X_2+X_3\}+5$ & 0.034 & 0.036 & 0.033 & 0.034  & 0.048 & 0.039 & 0.051 & 0.052 \\
				3. $\lambda_0(t|{\bm X})=\log(X_1^2+.5X_2+X_3+6)$ & 0.047 & 0.049 & 0.041 & 0.039 & 0.036 & 0.035 & 0.049 & 0.046\\
				4. $\log T= -X_1^2-.5X_2-X_3+\phi$ &0.095 & 0.177 & 0.098 & 0.188 &0.056 & 0.188 & 0.051 & 0.184\\
				5. $T=\exp(-X_1^2-.5X_2-X_3)+\epsilon$& 0.058 & 0.085 & 0.064 & 0.084& 0.058 & 0.085 & 0.065 & 0.105\\
				\hline
				\hline
				\multicolumn{9}{c} {Partial Likelihood Method}\\	
				\hline
				& \multicolumn{2}{c}{15\%} & \multicolumn{2}{c}{30\%}   & \multicolumn{2}{c}{15\%} & \multicolumn{2}{c}{30\%}\\
				&Rob. & Non-R. &Rob. & Non-R. &Rob. & Non-R.&Rob. & Non-R. \\
				\hline
				{1. $\lambda_0(t|{\bm X})=\exp\{X_1^2+.5X_2+X_3\}$} & 0.061&0.157&0.075&0.196&0.064&0.191&0.063&0.198 \\
				2. $\lambda_0(t|{\bm X})=\{X_1^2+.5X_2+X_3\}+5$ & 0.069&0.068&0.088&0.100&0.075&0.073&0.065&0.084 \\
				3. $\lambda_0(t|{\bm X})=\log(X_1^2+.5X_2+X_3+6)$ & 0.083&0.084&0.087&0.086&0.085&0.077&0.071&0.079\\
				4. $\log T= -X_1^2-.5X_2-X_3+\phi$ &
				0.058&0.240&0.052&0.259&0.069&0.228&0.060&0.234\\
				5. $T=\exp(-X_1^2-.5X_2-X_3)+\epsilon$&
				0.075&0.137&0.069&0.131&0.101&0.161&0.114&0.170\\
				\hline
			\end{tabular}
		}
	\end{center}
{\footnotesize $\phi$ is a zero-mean normal variable with $0.5$ standard deviation; $\epsilon$ is a standard exponential variable}
\end{table}

\subsection{Real Data Analysis}\label{sec:real}
\yu{We consider a dataset, \cite{alizadeh:2000}: gene-expression data in lymphoma patients. The data (``LymphomaData.rda'') is available in R glmnet package and is publicly available online. The original data is available from {\em http://llmpp.nih.gov/lymphoma/data.shtml.}
%\href{http://llmpp.nih.gov/lymphoma/data.shtml}{http://llmpp.nih.gov/lymphoma/data.shtml}. 
There are $n=240$ patients with measurements on $p=7399$ genes. It is of particular interest to find out which genes are important to the disease. We model the data with a high dimensional Cox regression model and obtain the following results for significance. There are $319$ genes out of the total $7399$ genes found significant at individual $5\%$ level based on the robust variance estimation method, while $169$ genes are found significant based on non-robust variance estimation method. This is because the robust variance estimates are generally smaller than the non-robust variance estimates. It is consistent with the findings in \cite{lin:1989} that when the model is correctly specified, robust variance estimates tend to be smaller than non-robust variance estimates, see row 1 of Table 1 in \cite{lin:1989}. This also suggests it may be ideal to model the data with a high dimensional Cox regression model. The Bonferroni-Holm procedure based on $\hat b$ finds no significant coefficient at the $5\%$ significance level for the family-wise error rate (FWER), under either robust or non-robust variance estimate. Similarly as Example 4.3 of \cite{van:2014}, such a low power is expected in presence of thousands of variables.}

%\section*{acknowledgements}

%\section*{conflict of interest}

\appendix
\section*{Appendix}

\subsection{Notations}
%It is convenient to introduce the following notations. Define $I$ to be the identity matrix with $n^2$ rows, $\widetilde{J}$ be a $n\times n$ matrix with all the elements being 1, and $0_p$ be a $p\times 1$ vector with all the elements being $0$,

Let $1_n=(1,\dots,1)^T\in \mathbb{R}^n$, $0_p=(0,\dots,0)^T\in\mathbb{R}^p$ and
\begin{eqnarray*}
    &&J=\mbox{diag}\{\underbrace{\widetilde{J},\cdots,\widetilde{J}}_{n}\}, \mbox{~where~} \widetilde J=1_n1_n^T,\\
    &&\widetilde{\Delta}=\mbox{diag}(\underbrace{\Delta_1,\cdots,\Delta_1}_{n},\cdots,\underbrace{\Delta_n,\cdots,\Delta_n}_{n}),\\
    &&B=\mbox{diag}(1(Y_1\geq Y_1),\cdots,1(Y_n\geq Y_1),\cdots,1(Y_1\geq Y_n),\cdots,1(Y_n\geq Y_n)),\\
    &&w_{i,j}(\beta)=\frac{\exp({{\bm X_j}^T\beta})}{\widehat{\mu}_0(Y_i,\beta)},\widetilde{w}_{i,j}(\beta)=\frac{1(Y_j\geq Y_i) \exp({{\bm X_j}^T\beta})/n}{\widehat{\mu}_0(Y_i,\beta)}, \mbox{~for~} i,j=1,\cdots,n,\\
    &&W_{\beta}=\mbox{diag}\left(\sqrt{w_{1,1}(\beta)},\cdots,\sqrt{w_{1,n}(\beta)},\cdots,\sqrt{w_{n,1}(\beta)},\cdots,\sqrt{w_{n,n}(\beta)}\right),\\
    &&\widetilde{W}_{\beta}=\mbox{diag}(\widetilde{w}_{1,1}(\beta),\cdots,\widetilde{w}_{1,n-1}(\beta),1,\cdots,\widetilde{w}_{n,1}(\beta),\cdots,\widetilde{w}_{n,n-1}(\beta),1),\\
    &&\widetilde{X}=({\bm X_1}-{\bm X_n},\cdots,\bm X_{n-1}-{\bm X_n},0_p,\cdots,{\bm X_1}-{\bm X_n},\cdots,\bm X_{n-1}-{\bm X_n},0_p)^T\in \mathbb{R}^{n^2\times p},\\
    &&H_\beta=I-J\widetilde W_\beta, A_{\beta}=W_{\beta}H_\beta\widetilde{X}.
\end{eqnarray*}

We first present some technical lemmas and their proofs.

\subsection{Technical Lemmas}
\begin{lemma}\label{lemma:weight}
	Under Assumptions 1, 2 and 5, %\ref{assp:xbounded}, \ref{assp:xbeta}, and \ref{assp:stoptime},
	we have
	\begin{eqnarray}
	&& JBW_{\beta}^2BJ=nJ,\label{j1}\\
	&&[H_\beta]_{i}^{-1}\nonumber\\
	&&=\frac{1}{\sum_{j=1}^{n-1}\widetilde{w}_{i,j}}\left(
	\begin{array}{ccccc}
	\sum_{j\neq 1, j=1}^{n-1}\widetilde{w}_{i,j} & -\widetilde{w}_{i,2} & \cdots & -\widetilde{w}_{i,n-1} & -1  \\
	-\widetilde{w}_{i,1} & \sum_{j\neq 2, j=1}^{n-1}\widetilde{w}_{i,j} & \cdots & -\widetilde{w}_{i,n-1} & -1  \\
	&  & \cdots &  &    \\
	-\widetilde{w}_{i,1} & -\widetilde{w}_{i,2} & \cdots & \sum_{j\neq n-1, j=1}^{n-1}\widetilde{w}_{i,j} & -1  \\
	-\widetilde{w}_{i,1} & -\widetilde{w}_{i,2} & \cdots & -\widetilde{w}_{i,n-1} & -(1-\sum_{j=1}^{n-1}\widetilde{w}_{i,j})   \\
	\end{array}
	\right)\label{inverIJW},
	\end{eqnarray}
	where $[H_\beta]_i$ is the $i$-th block matrix in $H_\beta$, which is non-singular provided that $Y_n<\max_{i} Y_i$.
	Further, for any $\beta$ between $\beta_0$ and $\widehat{\beta}$,
	\begin{equation}
	\|A_\beta\|_{\infty}=\mathcal{O}_P(1).\label{boundAbeta}
	\end{equation}
\end{lemma}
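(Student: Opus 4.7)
\medskip

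\noindent\textbf{Proof plan for Lemma \ref{lemma:weight}.}

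\medskip

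\emph{Part (\ref{j1}).} The plan is to exploit the block structure $J=\mathrm{diag}(\widetilde J,\dots,\widetilde J)$ with $\widetilde J=1_n1_n^T$ and reduce the claim to computing a single block. Since $B$ and $W_\beta^2$ are diagonal, $BW_\beta^2B$ is also diagonal, and its $i$-th block is $\mathrm{diag}\bigl(1(Y_j\ge Y_i)\exp(\bm X_j^T\beta)/\widehat\mu_0(Y_i,\beta)\bigr)_{j=1}^n$. Using the identity $\widetilde J \,\mathrm{diag}(d)\,\widetilde J=(1_n^Td)\,\widetilde J$, the $i$-th block of $JBW_\beta^2BJ$ equals $\bigl(\sum_{j}1(Y_j\ge Y_i)\exp(\bm X_j^T\beta)/\widehat\mu_0(Y_i,\beta)\bigr)\widetilde J=n\widetilde J$, because the sum in the parentheses equals $n\widehat\mu_0(Y_i,\beta)/\widehat\mu_0(Y_i,\beta)=n$ by definition of $\widehat\mu_0$. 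Assembling the blocks gives $nJ$.

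\medskip

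\emph{Part (\ref{inverIJW}).} The key observation is that the $i$-th diagonal block of $J\widetilde W_\beta$ is $\widetilde J\cdot\mathrm{diag}(\widetilde w_{i,1},\dots,\widetilde w_{i,n-1},1)=1_n c_i^T$, where $c_i=(\widetilde w_{i,1},\dots,\widetilde w_{i,n-1},1)^T$. Hence $[H_\beta]_i=I_n-1_nc_i^T$ is a rank-one perturbation of the identity, and the Sherman--Morrison formula yields
\begin{equation*}
[H_\beta]_i^{-1}=I_n+\frac{1_nc_i^T}{1-c_i^T1_n}=I_n-\frac{1_nc_i^T}{\sum_{j=1}^{n-1}\widetilde w_{i,j}},
\end{equation*}
since $c_i^T1_n=\sum_{j=1}^{n-1}\widetilde w_{i,j}+1$. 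Writing this out entrywise yields the stated matrix. The inverse exists provided $\sum_{j=1}^{n-1}\widetilde w_{i,j}>0$, i.e.\ at least one $j\le n-1$ has $Y_j\ge Y_i$; when $Y_n<\max_i Y_i$ this follows by taking $j=i$ if $i<n$, and otherwise taking an index in $\{1,\dots,n-1\}$ that attains the overall maximum.

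\medskip

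\emph{Part (\ref{boundAbeta}).} The main simplification is to show that each row of $H_\beta\widetilde X$ has an explicit form. Using $\sum_{l=1}^{n}\widetilde w_{i,l}=1$ and the definitions of $\widehat\mu_0$, $\widehat\mu_1$, one computes
\begin{equation*}
\sum_{l=1}^{n-1}\widetilde w_{i,l}(\bm X_l-\bm X_n)=\frac{\widehat\mu_1(Y_i;\beta)}{\widehat\mu_0(Y_i;\beta)}-\bm X_n,
\end{equation*}
from which both the ``$k<n$'' row $(\bm X_k-\bm X_n)-[\widehat\mu_1/\widehat\mu_0-\bm X_n]$ and the ``$k=n$'' row collapse into the same expression $\bm X_k-\widehat\mu_1(Y_i;\beta)/\widehat\mu_0(Y_i;\beta)$. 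Thus
\begin{equation*}
(A_\beta)_{(i,k),\cdot}=\sqrt{w_{i,k}(\beta)}\,\bigl(\bm X_k-\widehat\mu_1(Y_i;\beta)/\widehat\mu_0(Y_i;\beta)\bigr)^T.
\end{equation*}
Since $\widehat\mu_1/\widehat\mu_0$ is a convex combination of the $\bm X_j$'s (restricted to $Y_j\ge Y_i$), Assumption \ref{assp:xbounded} gives $\|\bm X_k-\widehat\mu_1/\widehat\mu_0\|_\infty\le 2K_1$. It remains to show $w_{i,k}(\beta)=\exp(\bm X_k^T\beta)/\widehat\mu_0(Y_i;\beta)=\mathcal O_P(1)$ uniformly. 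For the numerator, by Assumptions \ref{assp:xbounded}--\ref{assp:xbeta} and Lemma \ref{lemma:betaconsis},
\begin{equation*}
|\bm X_k^T\beta|\le|\bm X_k^T\beta_0|+K_1\|\widehat\beta-\beta_0\|_1\le K_2+\mathcal O_P(K_1\lambda s_0)=\mathcal O_P(1),
\end{equation*}
since $\lambda s_0=o(1)$ under Assumptions \ref{assp:s0sj}--\ref{assp:lambda0j}. For the denominator, using Assumption \ref{assp:stoptime} (so $Y_i\le\tau$) together with the LLN,
\begin{equation*}
\widehat\mu_0(Y_i;\beta)\ge e^{-C}\cdot n^{-1}\sum_{j=1}^n 1(Y_j\ge\tau)\ge \tfrac{1}{2}\xi e^{-C}
\end{equation*}
with probability tending to one, where $C$ is the uniform bound from the previous display. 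Combining these bounds concludes (\ref{boundAbeta}).

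\medskip

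The main obstacle is Part (\ref{boundAbeta}): the identification $(H_\beta\widetilde X)_{(i,k)}=\bm X_k-\widehat\mu_1/\widehat\mu_0$ requires careful bookkeeping across the two row regimes ($k<n$ and $k=n$), and the uniform lower bound on $\widehat\mu_0$ must be handled via the stopping-time assumption together with the $\ell_1$-consistency of $\widehat\beta$ to keep $\exp(\bm X^T\beta)$ bounded for every $\beta$ on the segment joining $\beta_0$ and $\widehat\beta$.
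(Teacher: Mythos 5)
Your proposal is correct and follows essentially the same route as the paper: the first two identities are the "direct calculation" from $\sum_{j=1}^n\widetilde w_{i,j}=1$ (your Sherman--Morrison step is a clean way to organize the inversion of $I_n-1_nc_i^T$ and reproduces the stated block inverse exactly), and the bound on $\|A_\beta\|_\infty$ rests on the same two facts the paper uses, namely boundedness of $\exp(\bm X^T\beta)/\widehat\mu_0(t,\beta)$ along the segment between $\beta_0$ and $\widehat\beta$ (which the paper imports from Lemma 2 of \cite{kong:2014} and you re-derive via Assumptions \ref{assp:xbounded}, \ref{assp:xbeta}, \ref{assp:stoptime} and the $\ell_1$-consistency of $\widehat\beta$) together with $\|\bm X_k-\widehat\mu_1/\widehat\mu_0\|_\infty\le 2K_1$. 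No gaps.
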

\begin{proof}%[of Lemma~\ref{lemma:weight}]
	From the definition, $C_{\beta}=\widetilde{\Delta}BA_{\beta}/n$.
	Note that for any $i=1,\cdots,n$,
	\begin{equation}
	\sum_{j=1}^n \widetilde{w}_{ij}=1.\label{weightsum}
	\end{equation}
	Then (\ref{j1}) and (\ref{inverIJW}) can be obtained from (\ref{weightsum}) and a direct calculation.
	
	By Assumptions 1 - 2, %\ref{assp:xbounded} - \ref{assp:xbeta},
	we have $\exp({\bm X}^T\beta_0)$ bounded from above. And from the consistency result for $\widehat{\beta}$ in Lemma 1, %\ref{lemma:betaconsis},
	we have $\exp({\bm X}^T\beta)$ is bounded above by a constant $U$, for any $\beta$ between $\beta_0$ and $\widehat{\beta}$.
	From Lemma 2 in \cite{kong:2014}, we have with probability tending to 1, for any $t\in [0,\tau]$ and $\beta$ between $\beta_0$ and $\widehat{\beta}$,
	\begin{equation}\label{weightbound}
	1/U^2\leq \exp({\bm X}^T\beta)/\widehat{\mu}_0(t,\beta)\leq 2U^2/\xi,
	\end{equation}
	where $\xi=P(Y\geq \tau)$ as defined in Assumption 5. %\ref{assp:stoptime}.
	In view of (\ref{weightbound}), (\ref{boundAbeta}) is true under Assumptions 1, 2 and 4. % \ref{assp:xbounded}, \ref{assp:xbeta}, and \ref{assp:eigen}.
	Direct calculation shows that $\det[H_\beta]=\prod_{i=1}^n \sum_{j=1}^{n-1} \widetilde{w}_{i,j}\neq 0$. Thus $H_\beta$ is non-singular provided that $Y_n<\max_{i} Y_i$.
\end{proof}
\ \\
\indent Lemmas \ref{lemma:gamma1} and \ref{lemma:gamma2} bound the $\ell_1$ and $\ell_2$ differences between $\widehat{\gamma}_{\widehat{\beta},j}$ and $\gamma_{\beta_0,j}$, respectively.

\begin{lemma}\label{lemma:gamma1}
	Under Assumptions 1 - 7, we have %\ref{assp:xbounded} - \ref{assp:lambda0j}, we have
	\begin{equation*}
	\|\widehat{\gamma}_{\widehat{\beta},j}-\gamma_{\beta_0,j}\|_1=\mathcal{O}_P(s_j\sqrt{\log p/n})+\mathcal{O}_P(\lambda^2s_0/\lambda_j),
	\end{equation*}
	and $$\|\widetilde{\Delta} B A_{\widehat{\beta},-j}(\widehat{\gamma}_{\widehat{\beta},j}-\gamma_{\beta_0,j})\|^2/{n^2}=\mathcal{O}_P(s_j\log p/n)+\mathcal{O}_P(\lambda^2s_0).$$
\end{lemma}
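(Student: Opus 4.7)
My plan is to imitate the standard Lasso oracle-inequality argument for the nodewise regression (\ref{nodewiseLasso}), but with the benchmark comparison vector chosen to be the \emph{population-level} coefficient $\gamma_{\beta_0,j}$ rather than the usual minimizer of the in-sample risk. First, from the KKT / basic inequality for the convex program defining $\widehat\gamma_{\widehat\beta,j}$, one obtains, after expanding the quadratic,
\begin{equation*}
\|C_{\widehat\beta,-j}(\widehat\gamma_{\widehat\beta,j}-\gamma_{\beta_0,j})\|^2
\;\le\; 2\,(\widehat\gamma_{\widehat\beta,j}-\gamma_{\beta_0,j})^{T}C_{\widehat\beta,-j}^{T}(C_{\widehat\beta,j}-C_{\widehat\beta,-j}\gamma_{\beta_0,j})
\;+\;2\lambda_j\bigl(\|\gamma_{\beta_0,j}\|_1-\|\widehat\gamma_{\widehat\beta,j}\|_1\bigr).
\end{equation*}
Bounding the first right-hand term by $2\|\widehat\gamma_{\widehat\beta,j}-\gamma_{\beta_0,j}\|_1\cdot \|C_{\widehat\beta,-j}^{T}(C_{\widehat\beta,j}-C_{\widehat\beta,-j}\gamma_{\beta_0,j})\|_\infty$ sets up a standard cone argument once we know the $\ell_\infty$ noise term is of smaller order than $\lambda_j$.

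The crux is therefore the control of the $\ell_\infty$ noise term, and this is where the intermediate function $\tilde l_n$ and Lemma 1 come in. Using the population orthogonality $\mathbb{E}\bigl(D_{\beta_0,-j}^{T}(D_{\beta_0,j}-D_{\beta_0,-j}\gamma_{\beta_0,j})\bigr)=0$, I would split
\begin{equation*}
C_{\widehat\beta,-j}^{T}(C_{\widehat\beta,j}-C_{\widehat\beta,-j}\gamma_{\beta_0,j}) \;=\; T_1+T_2+T_3,
\end{equation*}
where $T_1$ is the plug-in error $(\ddot l_n(\widehat\beta)-\ddot l_n(\beta_0))_{-j,\cdot}\binom{1}{-\gamma_{\beta_0,j}}$, $T_2=(\ddot l_n(\beta_0)-\ddot{\tilde l}_n(\beta_0))_{-j,\cdot}\binom{1}{-\gamma_{\beta_0,j}}$, and $T_3=(\ddot{\tilde l}_n(\beta_0)-\Sigma_{\beta_0})_{-j,\cdot}\binom{1}{-\gamma_{\beta_0,j}}$. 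For $T_1$, I would write $\ddot l_n(\widehat\beta)-\ddot l_n(\beta_0)$ via the mean value theorem on the smooth map $\beta\mapsto \widehat\mu_r(\cdot;\beta)/\widehat\mu_0(\cdot;\beta)$; under Assumptions \ref{assp:xbounded}--\ref{assp:xbeta} plus Lemma 1, this derivative is uniformly bounded, and contracting with $\binom{1}{-\gamma_{\beta_0,j}}$ produces a quantity of order $\|\widehat\beta-\beta_0\|_1\|A_{\beta_0,-j}\gamma_{\beta_0,j}\|_\infty=\mathcal O_P(\lambda s_0)$ by Assumption \ref{assp:agamma}. For $T_2$, I would use a Bernstein / Hoeffding bound together with Assumption \ref{assp:stoptime} to replace $\widehat\mu_r$ by $\mu_r$ uniformly in $t$, giving $\mathcal O_P(\sqrt{\log p/n})$. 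For $T_3$, the summands are bounded i.i.d.\ random variables (the intermediate function is precisely designed to have i.i.d.\ terms), so a union bound gives $\mathcal O_P(\sqrt{\log p/n})$ as well. Combining, $\|C_{\widehat\beta,-j}^{T}(C_{\widehat\beta,j}-C_{\widehat\beta,-j}\gamma_{\beta_0,j})\|_\infty=\mathcal O_P(\lambda s_0+\sqrt{\log p/n})$.

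Next I would establish a compatibility / restricted-eigenvalue condition for $C_{\widehat\beta,-j}^{T}C_{\widehat\beta,-j}/1$: since this matrix is close in $\ell_\infty$ to $\Sigma_{\beta_0,-j,-j}$ by the same three-step comparison and since $\Sigma_{\beta_0}$ has smallest eigenvalue bounded below by Assumption \ref{assp:eigen}, the Lasso-style perturbation argument of van~de~Geer et~al.\ (2014) transfers compatibility from $\Sigma_{\beta_0}$ to $C_{\widehat\beta,-j}^{T}C_{\widehat\beta,-j}$ (on the cone of vectors with $s_j$-sparse support given by the active set of $\gamma_{\beta_0,j}$, whose size is $s_j$). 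Plugging this into the basic inequality yields, in the standard way, $\|\widehat\gamma_{\widehat\beta,j}-\gamma_{\beta_0,j}\|_1=\mathcal O_P(s_j\lambda_j)+\mathcal O_P(s_j\lambda s_0/\lambda_j+\lambda^2 s_0/\lambda_j)$ and $\|C_{\widehat\beta,-j}(\widehat\gamma_{\widehat\beta,j}-\gamma_{\beta_0,j})\|^2=\mathcal O_P(s_j\lambda_j^2)+\mathcal O_P(\lambda^2 s_0)$. Using $\lambda\asymp\lambda_j\asymp\sqrt{\log p/n}$ from Assumption \ref{assp:lambda0j}, these simplify to the advertised rates (the extra $\lambda^2 s_0/\lambda_j$ term absorbs the weaker $s_j\lambda s_0/\lambda_j$ under Assumption \ref{assp:s0sj}). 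Finally, the identity $\widetilde\Delta B A_{\widehat\beta,-j}/n = C_{\widehat\beta,-j}$ (from the proof of Lemma \ref{lemma:weight}) immediately converts the prediction bound into the stated bound on $\|\widetilde\Delta B A_{\widehat\beta,-j}(\widehat\gamma_{\widehat\beta,j}-\gamma_{\beta_0,j})\|^2/n^2$.

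The main obstacle, as flagged by the authors, is the non-Lipschitz nature of the partial-likelihood summands that enter $C_\beta$ through $\widehat\mu_1/\widehat\mu_0$: one cannot simply quote existing Lasso empirical-process bounds, and in particular $\gamma_{\beta_0,j}$ is \emph{not} the in-sample minimizer, so the usual first-order optimality that cancels a cross term is unavailable. My approach handles this by (i) the three-step decomposition $T_1+T_2+T_3$ above, which isolates plug-in error, quenched empirical-process error, and i.i.d.\ deviations separately, and (ii) using Assumption \ref{assp:xbeta} to ensure $\exp(\mathbf X^{T}\beta)/\widehat\mu_0$ is uniformly bounded along the segment from $\beta_0$ to $\widehat\beta$, so that a mean value argument converts non-Lipschitz quantities into controllable increments of order $\|\widehat\beta-\beta_0\|_1$.
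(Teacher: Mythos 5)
Your overall architecture (basic inequality with the population benchmark $\gamma_{\beta_0,j}$, control of a cross/noise term, compatibility transfer, cone argument) matches the paper's, and your use of Assumption~\ref{assp:xbeta} plus the mean value theorem to tame the non-Lipschitz ratios $\widehat\mu_r/\widehat\mu_0$ is exactly the right instinct. However, there is a genuine gap in how you handle the plug-in error. You propose to bound the entire cross term by $2\|\widehat\gamma_{\widehat\beta,j}-\gamma_{\beta_0,j}\|_1\cdot\|C_{\widehat\beta,-j}^{T}(C_{\widehat\beta,j}-C_{\widehat\beta,-j}\gamma_{\beta_0,j})\|_\infty$ and you compute the $\ell_\infty$ noise level to be $\mathcal O_P(\lambda s_0+\sqrt{\log p/n})$. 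But $\lambda s_0\asymp s_0\sqrt{\log p/n}$ is \emph{much larger} than $\lambda_j\asymp\sqrt{\log p/n}$ whenever $s_0\to\infty$, so the premise of the standard cone argument (noise level $\le\lambda_j/2$) fails. Your own bookkeeping exposes this: the term $s_j\lambda s_0/\lambda_j$ you obtain is \emph{not} absorbed by $\lambda^2 s_0/\lambda_j$ — their ratio is $s_j/\lambda\to\infty$ — so your route delivers $\|\widehat\gamma_{\widehat\beta,j}-\gamma_{\beta_0,j}\|_1=\mathcal O_P(s_js_0\lambda)$ at best, which is strictly weaker than the lemma's claim.

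The paper avoids this by treating the two sources of noise asymmetrically. It writes $A_{\widehat\beta,j}-A_{\widehat\beta,-j}\gamma_{\beta_0,j}=W_{\widehat\beta}H_{\widehat\beta}H_{\beta_0}^{-1}W_{\beta_0}^{-1}\eta_{\beta_0,j}$ with $\eta_{\beta_0,j}=A_{\beta_0,j}-A_{\beta_0,-j}\gamma_{\beta_0,j}$, and splits the cross term into (i) the mean-zero part $(\widetilde\Delta B\eta_{\beta_0,j},\widetilde\Delta BA_{\widehat\beta,-j}(\widehat\gamma_{\widehat\beta,j}-\gamma_{\beta_0,j}))/n^2$, which \emph{is} controlled in $\ell_\infty$ at level $\lambda_j$ on the event $\mathcal E_n$, and (ii) the plug-in discrepancy, which is bounded by Cauchy--Schwarz in $\ell_2$ as $\|\widetilde\Delta BA_{\widehat\beta,-j}(\widehat\gamma_{\widehat\beta,j}-\gamma_{\beta_0,j})\|^2/(2n^2)+2I^2$ with $I=\|\widetilde\Delta B(W_{\widehat\beta}H_{\widehat\beta}H_{\beta_0}^{-1}W_{\beta_0}^{-1}-I)\eta_{\beta_0,j}\|/n=\mathcal O_P(\lambda\sqrt{s_0})$. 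The half prediction norm is absorbed into the left-hand side of the basic inequality and the plug-in error enters only \emph{additively} as $\mathcal O_P(\lambda^2s_0)$ — this is precisely why the lemma's bounds carry the additive terms $\mathcal O_P(\lambda^2s_0/\lambda_j)$ and $\mathcal O_P(\lambda^2s_0)$ rather than a multiplicative blow-up. To repair your proof, you must route your $T_1$ through this $\ell_2$/absorption device rather than through the $\ell_\infty$--$\ell_1$ H\"older bound; your treatment of $T_2$ and $T_3$ (replacing $\widehat\mu_r$ by $\mu_r$ uniformly and then invoking i.i.d.\ concentration for the intermediate function) is fine and corresponds to the paper's verification that $P(\mathcal E_n)\to1$.
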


\begin{proof}
	Let $\eta_{\beta_0,j}=A_{\beta_0,j}-A_{\beta_0,-j}\gamma_{\beta_0,j}$. We have
	\begin{eqnarray*}
		A_{\widehat{\beta},j}-A_{\widehat{\beta},-j}\gamma_{\beta_0,j}&&=\{ W_{\widehat{\beta}} H_{\widehat\beta}  H_{\beta_0}^{-1} W_{\beta_0}^{-1}A_{\beta_0}\}_j-\{W_{\widehat{\beta}} H_{\widehat\beta}  H_{\beta_0}^{-1} W_{\beta_0}^{-1}A_{\beta_0}\}_{-j}\gamma_{\beta_0,j}\\
		&&= W_{\widehat{\beta}} H_{\widehat\beta}  H_{\beta_0}^{-1} W_{\beta_0}^{-1}\eta_{\beta_0,j}.
	\end{eqnarray*}
	By the definition of $\widehat{\gamma}_{\widehat{\beta},j}$ and the fact that $C_{\beta}=\widetilde{\Delta} BA_{\beta}/n$, we have, $$\widehat{\gamma}_{\widehat{\beta},j}=\mbox{argmin}_{\gamma} \{\|\widetilde{\Delta} B A_{\widehat{\beta},j}-\widetilde{\Delta} B A_{\widehat{\beta},-j}\gamma\|^2/{n^2}+2\lambda_j\|\gamma\|_1\},$$
	which implies
	\begin{eqnarray*}
		\|\widetilde{\Delta} BA_{\widehat{\beta},j}-\widetilde{\Delta} BA_{\widehat{\beta},-j}\widehat{\gamma}_{\widehat{\beta},j}\|^2/{n^2}+2\lambda_j\|\widehat{\gamma}_{\widehat{\beta},j}\|_1\leq
		\|\widetilde{\Delta} BA_{\widehat{\beta},j}-\widetilde{\Delta} BA_{\widehat{\beta},-j}\gamma_{\beta_0,j}\|^2/{n^2}+2\lambda_j\|\gamma_{\beta_0,j}\|_1.
	\end{eqnarray*}
	%   Further from $\|a\|^2/{n^2}-\|b\|^2/{n^2}\leq c \Longrightarrow \|a-b\|^2/{n^2}-2(b,b-a)/{n^2}\leq c$, where $(u,v):=u^T v$,%, $u,v\in R^{n^2}$, we have
	Simple algebra shows that
	\begin{eqnarray}\label{eq:uv}
	&&\|\widetilde{\Delta} BA_{\widehat{\beta},-j}(\widehat{\gamma}_{\widehat{\beta},j}-\gamma_{\beta_0,j})\|^2/{n^2}+2\lambda_j\|\widehat{\gamma}_{\widehat{\beta},j}\|_1\notag\\
	&&\leq 2\left(\widetilde{\Delta} BA_{\widehat{\beta},j}-\widetilde{\Delta} BA_{\widehat{\beta},-j}\gamma_{\beta_0,j},\widetilde{\Delta} BA_{\widehat{\beta},-j}(\widehat{\gamma}_{\widehat{\beta},j}-\gamma_{\beta_0,j})\right)/{n^2}+2\lambda_j\|\gamma_{\beta_0,j}\|_1,
	\end{eqnarray}
	where $(u,v):=u^T v$.
	%\begin{align*}
	%		I=\left\{\widetilde{\Delta} BA_{\widehat{\beta},j}-\widetilde{\Delta} BA_{\widehat{\beta},-j}\gamma_{\beta_0,j},\widetilde{\Delta} BA_{\widehat{\beta},-j}(\widehat{\gamma}_{\widehat{\beta},j}-\gamma_{\beta_0,j})\right\}-
	%		\left\{\widetilde{\Delta} B\eta_{\beta_0,j},\widetilde{\Delta} BA_{\beta_0,-j}(\widehat{\gamma}_{\widehat{\beta},j}-\gamma_{\beta_0,j})\right\}\Big/{n^2}-\left\{\widetilde{\Delta} B\eta_{\beta_0,j},\widetilde{\Delta} B A_{{\beta_0},-j}(\widehat{\gamma}_{\beta_0,j}-\gamma_{\beta_0,j})\right\}\Big/{n^2}\right]
	%	\end{align*}
	
	Define $$
	(i)=\bigg|\left(\widetilde{\Delta} BA_{\widehat{\beta},j}-\widetilde{\Delta} BA_{\widehat{\beta},-j}\gamma_{\beta_0,j},\widetilde{\Delta} BA_{\widehat{\beta},-j}(\widehat{\gamma}_{\widehat{\beta},j}-\gamma_{\beta_0,j})\right)/{n^2}-(\widetilde{\Delta} B\eta_{\beta_0,j},\widetilde{\Delta} BA_{\widehat{\beta},-j}(\widehat{\gamma}_{\widehat{\beta},j}-\gamma_{\beta_0,j}))/{n^2}\bigg|.
	$$
	We next aim to show
	\begin{align}\label{eq:uv2}
	(i)\le \|\widetilde{\Delta} B A_{\beta_0,-j}(\widehat{\gamma}_{\widehat{\beta},j}-\gamma_{\beta_0,j})\|^2/n^2 + \mathcal{O}_p(\lambda^2s_0).
	\end{align}
	
	Using the Cauchy-Schwartz inequality, we get
	\begin{eqnarray*}
		&&\left|\left(\widetilde{\Delta} BA_{\widehat{\beta},j}-\widetilde{\Delta} BA_{\widehat{\beta},-j}\gamma_{\beta_0,j},\widetilde{\Delta} BA_{\widehat{\beta},-j}(\widehat{\gamma}_{\widehat{\beta},j}-\gamma_{\beta_0,j})\right)-
		\left(\widetilde{\Delta} B\eta_{\beta_0,j},\widetilde{\Delta} BA_{\widehat{\beta},-j}(\widehat{\gamma}_{\widehat{\beta},j}-\gamma_{\beta_0,j})\right)\right|\Big/{n^2}\nonumber\\
		&&=\bigg|\bigg(\widetilde{\Delta} B( W_{\widehat{\beta}} H_{\widehat\beta}  H_{\beta_0}^{-1} W_{\beta_0}^{-1}-I)\eta_{\beta_0,j}, \widetilde{\Delta} BA_{\widehat{\beta},-j}(\widehat{\gamma}_{\widehat{\beta},j}-\gamma_{\beta_0,j})\bigg)\Big/{n^2}\bigg|\nonumber\\
		&&\leq\left\{\|\widetilde{\Delta} B A_{\widehat\beta,-j}(\widehat{\gamma}_{\widehat{\beta},j}-\gamma_{\beta_0,j})\|/n\right\}\left\{\|\widetilde{\Delta} B( W_{\widehat{\beta}} H_{\widehat\beta}  H_{\beta_0}^{-1} W_{\beta_0}^{-1}-I)\eta_{\beta_0,j}\|/n\right\}\\
		&&\le \|\widetilde{\Delta} B A_{\widehat\beta,-j}(\widehat{\gamma}_{\widehat{\beta},j}-\gamma_{\beta_0,j})\|^2/2n^2 + 2 I^2,
	\end{eqnarray*}
	where $I=\|\widetilde{\Delta} B( W_{\widehat{\beta}} H_{\widehat\beta}  H_{\beta_0}^{-1} W_{\beta_0}^{-1}-I)\eta_{\beta_0,j}\|/n$ and the last inequality follows from $2ab \le a^2/2 +2b^2$. To show (\ref{eq:uv2}), now it suffices to show
	\begin{equation}\label{wn2}
	I=\mathcal{O}_p(\lambda\sqrt{s_0}).
	\end{equation}
	%	Note that $W_{\widehat{\beta}} H_{\widehat\beta}  H_{\beta_0}^{-1} W_{\beta_0}^{-1}-I= (W_{\widehat{\beta}}-W_{\beta_0})W_{\beta_0}^{-1}+W_{\beta_0}(H_{\widehat\beta} - H_{\beta_0})H_{\beta_0}^{-1} W_{\beta_0}^{-1}+(W_{\widehat{\beta}}-W_{\beta_0})(H_{\widehat\beta} - H_{\beta_0})H_{\beta_0}^{-1} W_{\beta_0}^{-1}$, we have
	
	Define $\widetilde{\eta}:= H_{\beta_0}^{-1} W_{\beta_0}^{-1}\eta_{\beta_0,j}$. We have
	\begin{align*}
	I^2 &= \|\widetilde{\Delta} B( W_{\widehat{\beta}} H_{\widehat\beta} - W_{\beta_0}H_{\beta_0})\widetilde\eta\|^2/n^2\\
	&=\|\widetilde{\Delta} B\{(W_{\widehat{\beta}}-W_{\beta_0})H_{\beta_0}+W_{\beta_0}(H_{\widehat\beta} - H_{\beta_0})+(W_{\widehat{\beta}}-W_{\beta_0})(H_{\widehat\beta} - H_{\beta_0})\}\widetilde\eta\|^2/n^2\\
	&\le \frac{1}{n^2}|\widetilde{\eta}^T H_{\beta_0}^T(W_{\widehat{\beta}}-W_{\beta_0})\widetilde{\Delta} B(W_{\widehat{\beta}}-W_{\beta_0})H_{\beta_0}\widetilde{\eta}|+  \frac{1}{n^2}|\widetilde{\eta}^T J(\widetilde{W}_{\widehat\beta} - \widetilde W_{\beta_0})^TW_{\beta_0}\widetilde{\Delta} BW_{\beta_0}J(\widetilde{W}_{\widehat\beta} - \widetilde W_{\beta_0})\widetilde{\eta}|\\
	&\qquad + \frac{1}{n^2}|\widetilde{\eta}^T J(\widetilde{W}_{\widehat\beta} - \widetilde W_{\beta_0})^T(W_{\widehat{\beta}}-W_{\beta_0})\widetilde{\Delta} B(W_{\widehat{\beta}}-W_{\beta_0})J(\widetilde{W}_{\widehat\beta} - \widetilde W_{\beta_0})\widetilde{\eta}|\\
	&\triangleq (a) + (b)  + (c).
	\end{align*}
	We apply Lemma 6.1 in \cite{van:2014} to bound the RHS of the above inequality. Use (a) as an example, from the mean value theorem, the Cauchy-Schwartz inequality and (\ref{weightbound}),
	\begin{eqnarray*}
		(a)\le 	&&\frac{1}{n^2}\|(W_{\widehat{\beta}}- W_{\beta_0})H_{\beta_0}\widetilde{\eta}\|_2^2=\frac{1}{n^2}\sum_{i=1}^n\sum_{j=1}^n\bigg|\sqrt{\frac{e^{X^T_j\widehat{\beta}}}{\widehat{\mu}_0(Y_i,\widehat{\beta})}}-\sqrt{\frac{e^{{\bm X_j}^T\beta_0}}{\widehat{\mu}_0(Y_i,\beta_0)}}\bigg|^2\|H_{\beta_0}\tilde{\eta}\|_{\infty}\\
		&&\leq \sup_{X,t,\beta}\left(\frac{\exp({\bm X}^T\beta)}{\widehat{\mu}_0(t,\beta)}\right) \frac{1}{2n^2}\sum_{i=1}^n\sum_{j=1}^n|{\bm X_j}^T(\widehat{\beta}-\beta_0)-\{\log\widehat{\mu}_0(Y_i,\widehat{\beta})-\log\widehat{\mu}_0(Y_i,\beta_0)\}|^2\mathcal{O}_P(1)\\
		&&\leq (2U^2/\xi) \frac{1}{n^2}\sum_{i=1}^n\sum_{j=1}^n \left([{\bm X_j}^T(\widehat{\beta}-\beta_0)]^2+\{\log\widehat{\mu}_0(Y_i,\widehat{\beta})-\log\widehat{\mu}_0(Y_i,\beta_0)\}^2\right)\mathcal{O}_P(1)\\
		&&\leq(2U^2/\xi) \bigg\{ \frac{1}{n}\sum_{i=1}^n [{\bm X_i}(\widehat{\beta}-\beta_0)]^2+\frac{1}{n}\sum_{i=1}^n\left(\left[\sup_{t,\beta}\frac{1}{\widehat{\mu}_0(t,\beta)}\right] \{\widehat{\mu}_0 (Y_i,\widehat{\beta})-\widehat{\mu}_0(Y_i,\beta_0)\} \right)^2\bigg\}\mathcal{O}_P(1)\\
		&&\leq(2U^2/\xi) \bigg\{\|{\bm X}^T(\widehat{\beta}-\beta_0)\|^2/n^2+U^2 \frac{1}{n} \sum_{j=1}^n U^2 ({\bm X_j}^T\widehat{\beta}-{\bm X_j}^T\beta_0)^2\bigg\}\mathcal{O}_P(1)\\
		&& =\mathcal{O}_P(\|{\bm X}(\widehat{\beta}-\beta_0)\|^2/n^2)\\
		&&=\mathcal{O}_P(\lambda^2s_0),
	\end{eqnarray*}
	where $\beta$ is between $\beta_0$ and $\widehat{\beta}$ and  the second inequality holds with probability tending to $1$. The last equation is from Lemma 1. %\ref{lemma:betaconsis}.
	Note that Lemma \ref{lemma:weight} and Assumption 3 %\ref{assp:agamma}
	imply $\|\eta_{\beta_0,j}\|_{\infty}\leq \|A_{\beta_0,j}\|_{\infty}+\|A_{\beta_0,-j}\gamma_{\beta_0,j}\|_{\infty}=\mathcal{O}_P(1).$ We have
	$\|H_{\beta_0}\tilde{\eta}\|_{\infty}=\mathcal{O}_P(1)$. %\textbf{with probability 1 or probability tending to one?}.
	Hence, from Lemma 6.1 in \cite{van:2014}, we get $(a)=\mathcal{O}_P(\lambda^2s_0)$. By similar arguments, we can show (\ref{wn2}) holds.
	
	By applying (\ref{eq:uv2}), simple algebra shows that the RHS of (\ref{eq:uv}) can be bounded by
	\begin{eqnarray*}
		\frac{\|\widetilde{\Delta} B A_{\widehat{\beta},j}(\widehat{\gamma}_{\widehat{\beta},j}-\gamma_{\beta_0,j})\|^2}{2n^2}+\frac{2(\widetilde{\Delta} B \eta_{\beta_0,j},\widetilde{\Delta} B A_{\widehat{\beta},-j}(\widehat{\gamma}_{\widehat{\beta},j}-\gamma_{\beta_0,j}))}{n^2}+2\lambda_j\|\gamma_{\beta_0,j}\|_1+\mathcal{O}_P(\lambda^2s_0).
	\end{eqnarray*}
	%Move $\delta\|\widetilde{\Delta} B A_{\widehat{\beta},j}(\widehat{\gamma}_{\widehat{\beta},j}-\gamma_{\beta_0,j})\|^2/{n^2}$ to the left of the inequality,
	
	Define the event $\mathcal{E}_n=\{2\|\eta_{\beta_0,j}^TB\tilde{\Delta}A_{\hat\beta,-j}\|_{\infty}/n^2\leq \lambda_j\}$.
	By similar arguments as in linear models \citep{van:2014} and invoking the fact that $\|\widetilde\Delta B A_{\widehat{\beta}}(\widehat{\beta}-\beta_0)\|_2/n^2=\mathcal{O}_P(\|\bf{X}(\widehat{\beta}-\beta_0)\|^2/n)$ \citep{kong:2014}, it can be shown that $P(\mathcal{E}_n)\rightarrow 1.$
	Then we have on the event $\mathcal{E}_n$,
	%    With appropriate constant choice, with probability 1, $$2(\widetilde{\Delta} B \eta_{\beta_0,j},\widetilde{\Delta} B A_{\beta_0,-j}(\widehat{\gamma}_{\widehat{\beta},j}-\gamma_{\beta_0,j}))/{n^2}\leq \lambda_j \|\widehat{\gamma}_{\widehat{\beta},j}-\gamma_{\beta_0,j}\|_1,$$ hence
	\begin{eqnarray}
	&&\|\widetilde{\Delta} B A_{\widehat{\beta},-j}(\widehat{\gamma}_{\widehat{\beta},j}-\gamma_{\beta_0,j})\|^2/{2n^2}+2\lambda_j\|\widehat{\gamma}_{\widehat{\beta},j}\|_1\label{equ1:lemmagamma1}\\
	\leq  &&2(\widetilde{\Delta} B \eta_{\beta_0,j},\widetilde{\Delta} B A_{\widehat\beta,-j}(\widehat{\gamma}_{\widehat{\beta},j}-\gamma_{\beta_0,j}))/{n^2}+2\lambda_j\|\gamma_{\beta_0,j}\|_1+\mathcal{O}_P(\lambda^2s_0)\nonumber\\
	\leq  &&\lambda_j \|\widehat{\gamma}_{\widehat{\beta},j}-\gamma_{\beta_0,j}\|_1+2\lambda_j\|\gamma_{\beta_0,j}\|_1+\mathcal{O}_P(\lambda^2s_0). \nonumber
	\end{eqnarray}
	It follows from the triangular inequality that, %on the LHS,
	\begin{align}
	\|\widehat{\gamma}_{\widehat{\beta},j}\|_1=\|\widehat{\gamma}_{\widehat{\beta},j,s_{0j}}\|_1+\|\widehat{\gamma}_{\widehat{\beta},j,s_{0j}^c}\|_1\label{triinequ1}\geq \|\gamma_{\beta_0,j,s_{0j}}\|_1-\|\widehat{\gamma}_{\widehat{\beta},j,s_{0j}}-\gamma_{\beta_0,j,s_{0j}}\|_1+\|\widehat{\gamma}_{\widehat{\beta},j,s_{0j}^c}\|_1,
	\end{align}
	where the subscript $s_{0j}$ denotes the set $\{k: \gamma_{\beta_0,j,k}\neq 0\}.$ %, and subscript $s_{0j}^c$ denotes the set $\{k:  \gamma_{\beta_0,j,k}= 0\}$.
	Also note that
	\begin{eqnarray}
	\|\widehat{\gamma}_{\widehat{\beta},j}-\gamma_{\beta_0,j}\|_1=\|\widehat{\gamma}_{\widehat{\beta},j,s_{0j}}-
	\gamma_{\beta_0,j,s_{0j}}\|_1+\|\widehat{\gamma}_{\widehat{\beta},j,s_{0j}^c}\|_1.\label{triinequ2}
	\end{eqnarray}
	Plugging (\ref{triinequ1}) into the LHS of (\ref{equ1:lemmagamma1}), and (\ref{triinequ2}) into the RHS of (\ref{equ1:lemmagamma1}), we have
	\begin{eqnarray*}
		&&\|\widetilde{\Delta} B A_{\widehat{\beta},-j}(\widehat{\gamma}_{\widehat{\beta},j}-\gamma_{\beta_0,j})\|^2/{2n^2}+2\lambda_j\|\gamma_{\beta_0,j,s_{0j}}\|_1-2\lambda_j\|\widehat{\gamma}_{\widehat{\beta},j,s_{0j}}-\gamma_{\beta_0,j,s_{0j}}\|_1+2\lambda_j\|\widehat{\gamma}_{\widehat{\beta},j,s_{0j}^c}\|_1\\
		&&\leq \lambda_j \|\widehat{\gamma}_{\widehat{\beta},j,s_{0j}}-\gamma_{\beta_0,j,s_{0j}}\|_1+\lambda_j\|\widehat{\gamma}_{\widehat{\beta},j,s_{0j}^c}\|_1
		+2\lambda_j\|\gamma_{\beta_0,j,s_{0j}}\|_1+\mathcal{O}_P(\lambda^2s_0).
	\end{eqnarray*}
	Therefore, (\ref{equ1:lemmagamma1}) becomes
	\begin{eqnarray*}
		&&\|\widetilde{\Delta} B A_{\widehat{\beta},-j}(\widehat{\gamma}_{\widehat{\beta},j}-\gamma_{\beta_0,j})\|^2/{2n^2}+\lambda_j\|\widehat{\gamma}_{\widehat{\beta},j,s_{0j}^c}\|_1\leq 3\lambda_j\|\widehat{\gamma}_{\widehat{\beta},j,s_{0j}}-\gamma_{\beta_0,j,s_{0j}}\|_1+\mathcal{O}_P(\lambda^2s_0).
	\end{eqnarray*}
	
	Since the smallest eigenvalue of $\Sigma_{\beta_0}$ is bounded away from zero, the compatibility condition holds for $\widehat{\Sigma}$. There exists a constant $\phi_{0j}$, such that
	\begin{eqnarray*}
		&&\quad\|\widetilde{\Delta} B A_{\widehat{\beta},-j}(\widehat{\gamma}_{\widehat{\beta},j}-\gamma_{\beta_0,j})\|^2/{2n^2}+\lambda_j\|\widehat{\gamma}_{\widehat{\beta},j}-\gamma_{\beta_0,j}\|_1\\
		&&=\|\widetilde{\Delta} B A_{\widehat{\beta},-j}(\widehat{\gamma}_{\widehat{\beta},j}-\gamma_{\beta_0,j})\|^2/{2n^2}+
		\lambda_j\|\widehat{\gamma}_{\widehat{\beta},j,s_{0j}}-\gamma_{\beta_0,j,s_{0j}}\|_1+\lambda_j\|\widehat{\gamma}_{\widehat{\beta},j,s_{0j}^c}\|_1\\
		&&\leq 4\lambda_j\|\widehat{\gamma}_{\widehat{\beta},j,s_{0j}}-\gamma_{\beta_0,j,s_{0j}}\|_1+\mathcal{O}_P(\lambda^2s_0)\\
		&&\leq 4\lambda_j [(\widehat{\gamma}_{\widehat{\beta},j}-\gamma_{\beta_0,j})^T\widehat{\Sigma}_{\backslash j,\backslash j}(\widehat{\gamma}_{\widehat{\beta},j}-\gamma_{\beta_0,j})s_j/\phi_{0j}^2]^{1/2}+\mathcal{O}_P(\lambda^2s_0)\\
		&&=4\lambda_j \sqrt{s_j} \|\widetilde{\Delta} B A_{\widehat{\beta},-j}(\widehat{\gamma}_{\widehat{\beta},j}-\gamma_{\beta_0,j})\|_2/(n\phi_{0j})+\mathcal{O}_P(\lambda^2s_0)\\
		&&\leq \|\widetilde{\Delta} B A_{\widehat{\beta},-j}(\widehat{\gamma}_{\widehat{\beta},j}-\gamma_{\beta_0,j})\|^2/{4n^2}+16\lambda_j^2s_j/\phi_{0j}^2+\mathcal{O}_P(\lambda^2s_0).
	\end{eqnarray*}
	The last inequality follows from the basic inequality $4uv\leq u^2/4+16v^2$. Hence,
	\begin{eqnarray*}
		&&\|\widetilde{\Delta} B A_{\widehat{\beta},-j}(\widehat{\gamma}_{\widehat{\beta},j}-\gamma_{\beta_0,j})\|^2/{4n^2}+\lambda_j\|\widehat{\gamma}_{\widehat{\beta},j}-\gamma_{\beta_0,j}\|_1\leq 16 \lambda_j^2 s_j/\phi_{0j}^2+\mathcal{O}_P(\lambda^2s_0).
	\end{eqnarray*}
	Therefore, we deduce that $
	\|\widehat{\gamma}_{\widehat{\beta},j}-\gamma_{\beta_0,j}\|_1=\mathcal{O}_P(\lambda_js_j)+\mathcal{O}_P(\lambda^2s_0/\lambda_j)$ and $    \|\widetilde{\Delta} B A_{\widehat{\beta},-j}(\widehat{\gamma}_{\widehat{\beta},j}-\gamma_{\beta_0,j})\|^2/n^2=\mathcal{O}_P(\lambda_j^2s_j)+\mathcal{O}_P(\lambda^2s_0)$.
\end{proof}
\ \\

\begin{lemma} \label{lemma:gamma2}
	Under Assumptions 1 - 7, we have %\ref{assp:xbounded} - \ref{assp:lambda0j}, we have
	\begin{equation*}
	\|\widehat{\gamma}_{\widehat{\beta},j}-\gamma_{\beta_0,j}\|=\mathcal{O}_P(\sqrt{s_j\log p/n})+\mathcal{O}_P(\lambda\sqrt{s_0}).
	\end{equation*}
\end{lemma}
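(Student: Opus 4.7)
The plan is to upgrade the prediction-error bound already contained in Lemma \ref{lemma:gamma1} into an $\ell_2$ estimation bound by invoking a restricted eigenvalue (RE) condition on $\widehat\Sigma_{\backslash j,\backslash j}$. Write $v=\widehat\gamma_{\widehat\beta,j}-\gamma_{\beta_0,j}$ and recall from the proof of Lemma \ref{lemma:gamma1} that $v$ satisfies, on an event with probability tending to one, the basic inequality
\[
\|\widetilde\Delta B A_{\widehat\beta,-j}v\|^2/(2n^2)+\lambda_j\|v_{S_{0j}^c}\|_1\le 3\lambda_j\|v_{S_{0j}}\|_1+\mathcal O_P(\lambda^2s_0),
\]
where $S_{0j}=\{k:\gamma_{\beta_0,j,k}\ne 0\}$ has cardinality $s_j$. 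In particular, up to the additive $\mathcal O_P(\lambda^2 s_0)$ slack, $v$ lies in the cone $\{u:\|u_{S_{0j}^c}\|_1\le 3\|u_{S_{0j}}\|_1\}$ that is standard in Lasso analysis.

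First I would verify that the RE condition holds for $\widehat\Sigma_{\backslash j,\backslash j}=n^{-2}A_{\widehat\beta,-j}^T B\widetilde\Delta BA_{\widehat\beta,-j}$. By Assumption~\ref{assp:eigen}, $\Sigma_{\beta_0}$ has smallest eigenvalue bounded away from zero, and from Lemma \ref{lemma:weight} together with Lemma \ref{lemma:betaconsis} (via the same mean-value decomposition used in the bound of term (a) in the proof of Lemma \ref{lemma:gamma1}), one shows $\|\widehat\Sigma-\Sigma_{\beta_0}\|_\infty=o_P(1)$ at the rate needed to transfer RE from $\Sigma_{\beta_0}$ to $\widehat\Sigma_{\backslash j,\backslash j}$ (on cones of the relevant sparsity). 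Hence there exists a constant $\phi>0$ such that with probability tending to one,
\[
\phi^2\|v\|_2^2\le \|\widetilde\Delta B A_{\widehat\beta,-j}v\|^2/n^2
\]
for every $v$ in the cone described above.

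Second I would combine this inequality with the prediction-error bound from Lemma \ref{lemma:gamma1},
\[
\|\widetilde\Delta B A_{\widehat\beta,-j}v\|^2/n^2=\mathcal O_P(s_j\log p/n)+\mathcal O_P(\lambda^2 s_0),
\]
and take square roots to obtain
\[
\|v\|_2\le \phi^{-1}\bigl\{\mathcal O_P(\sqrt{s_j\log p/n})+\mathcal O_P(\lambda\sqrt{s_0})\bigr\},
\]
which is exactly the claimed rate, since $\lambda\asymp\sqrt{\log p/n}$ by Assumption \ref{assp:lambda0j}. The additive $\mathcal O_P(\lambda^2 s_0)$ slack in the cone condition can be absorbed by a standard peeling argument: either $\|v_{S_{0j}^c}\|_1\le 3\|v_{S_{0j}}\|_1$ and the cone-based RE applies directly, or else the slack dominates, in which case $\|v\|_1^2$ (and hence $\|v\|_2^2$) is already controlled by $\mathcal O_P(\lambda^2 s_0)$ times a constant.

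The main obstacle is the first step: transferring the RE (or compatibility) constant from the population quantity $\Sigma_{\beta_0}$ to the empirical quantity $\widehat\Sigma_{\backslash j,\backslash j}$. Unlike the linear-model setting in \cite{van:2014}, here $\widehat\Sigma$ is built from the non-Lipschitz partial-likelihood Hessian evaluated at the data-dependent $\widehat\beta$, so the concentration argument must proceed through the intermediate Hessian $\ddot{\widetilde l}_n$ in two stages: $\widehat\Sigma\to\ddot{\widetilde l}_n(\widehat\beta)$ controlled via the same $W_{\widehat\beta}H_{\widehat\beta}H_{\beta_0}^{-1}W_{\beta_0}^{-1}-I$ expansion used in the proof of Lemma \ref{lemma:gamma1}, and $\ddot{\widetilde l}_n(\widehat\beta)\to\Sigma_{\beta_0}$ via mean value theorem together with Assumption \ref{assp:xbeta} and the consistency of $\widehat\beta$ from Lemma \ref{lemma:betaconsis}. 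Once this uniform deviation is in hand the RE transfer is routine, and the rest of the argument reduces to the square-root step above.
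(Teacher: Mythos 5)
Your overall skeleton — lower-bound the empirical quadratic form $\|\widetilde\Delta B A_{\widehat\beta,-j}v\|^2/n^2$ by a population eigenvalue bound minus a deviation term, then combine with the prediction-error bound from Lemma \ref{lemma:gamma1} and take square roots — is the same as the paper's, and your main branch (cone condition plus RE transfer, which is legitimate here since $s_j\|\widehat\Sigma-\Sigma_{\beta_0}\|_\infty=\mathcal O_P(s_j\sqrt{\log p/n})=o_P(1)$ under Assumption \ref{assp:s0sj}) would go through. The gap is in your fallback branch. When the $\mathcal O_P(\lambda^2 s_0)$ slack dominates the basic inequality, you get $\|v\|_1=\mathcal O_P(\lambda^2 s_0/\lambda_j)=\mathcal O_P(\lambda s_0)$, and therefore $\|v\|_1^2=\mathcal O_P(\lambda^2 s_0^2)$, not $\mathcal O_P(\lambda^2 s_0)$ as you assert. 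Passing to $\|v\|_2\le\|v\|_1$ then only yields $\|v\|_2=\mathcal O_P(\lambda s_0)$, which misses the claimed rate $\mathcal O_P(\lambda\sqrt{s_0})$ by a factor of $\sqrt{s_0}$; there is no norm inequality that turns an $\ell_1$ bound of order $\lambda s_0$ into an $\ell_2$ bound of order $\lambda\sqrt{s_0}$ without further structure. So as written the peeling step does not close the argument.

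The paper sidesteps the cone entirely, and the same device repairs your fallback branch. It establishes the uniform deviation
\begin{equation*}
\Bigl|\|\widetilde\Delta B A_{\widehat\beta,-j}v\|^2/n^2-\mathbb E\|\widetilde\Delta B A_{\beta_0,-j}v\|^2\Bigr|=\mathcal O_P(\sqrt{\log p/n})\,\|v\|_1^2,
\end{equation*}
and then substitutes the \emph{already proved} $\ell_1$ bound $\|v\|_1=\mathcal O_P(s_j\sqrt{\log p/n})+\mathcal O_P(\lambda^2 s_0/\lambda_j)$ from Lemma \ref{lemma:gamma1} into the right-hand side; under Assumptions \ref{assp:s0sj} and \ref{assp:lambda0j} every resulting term (e.g., $\sqrt{\log p/n}\cdot\lambda^2 s_0^2\asymp\lambda^2 s_0\cdot\lambda s_0$) is of smaller order than $s_j\log p/n+\lambda^2 s_0$, so $\Lambda_{\min}^2\|v\|_2^2\le\mathcal O_P(s_j\log p/n)+\mathcal O_P(\lambda^2 s_0)$ with no case split and no cone membership required. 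If you want to keep your structure, apply exactly this substitution in the fallback case: there $\|v\|_1^2=\mathcal O_P(\lambda^2 s_0^2)$, the deviation term is $\mathcal O_P(\lambda^3 s_0^2)=o_P(\lambda^2 s_0)$, and the quadratic form is itself bounded by $\mathcal O_P(\lambda^2 s_0)$ from the basic inequality, recovering $\|v\|_2=\mathcal O_P(\lambda\sqrt{s_0})$. With that correction your proof is complete and essentially equivalent to the paper's.
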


\begin{proof}
	Similar to the proof of (\ref{wn2}) and the proof of Theorem 2.4 in \cite{van:2014},
	%   \yu{$I-JW_{\widehat{\beta}}$, not $\tilde W$?}
	\begin{eqnarray*}
		&&\bigg|\|\widetilde{\Delta} B A_{\widehat{\beta}}v\|^2/{n^2}-\mathbb{E}\|\widetilde{\Delta} B A_{\beta_0}v\|^2\bigg|\\
		&&\leq \bigg|\|\widetilde{\Delta} B A_{\widehat{\beta}}v\|^2/{n^2}-\|\widetilde{\Delta} B A_{\beta_0}v\|^2/{n^2}\bigg|+\bigg|\|\widetilde{\Delta} B A_{\beta_0}v\|^2/{n^2}-\mathbb{E}\|\widetilde{\Delta} B A_{\beta_0}v\|^2\bigg|\\
		&&=\left\{\|\widetilde{X}^TH_{\widetilde\beta}^TW_{\widehat{\beta}}B\widetilde{\Delta} B W_{\widehat{\beta}} H_{\widehat\beta}\widetilde{X}-\widetilde{X}^TH_{\beta_0}^T W_{\beta_0}B\widetilde{\Delta} B  W_{\beta_0} H_{\beta_0}\widetilde{X}\|_{\infty}+\mathcal{O}_P(\sqrt{\log p/n})\right\}\|v\|_1^2\\
		&&=\left\{\mathcal{O}_P(\lambda^2s_0\sqrt{\log p})+\mathcal{O}_P(\sqrt{\log p/n})\right\}\|v\|_1^2\\
		&&=\mathcal{O}_P(\sqrt{\log p/n})\|v\|_1^2.
	\end{eqnarray*}
	Substituting $v=\widehat{\gamma}_{\widehat{\beta},j}-\gamma_{\beta_0,j}$ into the above inequality, we have
	\begin{eqnarray*}
		&&\quad\|\widetilde{\Delta} B A_{\widehat{\beta},-j}(\widehat{\gamma}_{\widehat{\beta},j}-\gamma_{\beta_0,j})\|^2/{n^2}\\
		&&\geq \mathbb{E}\|\widetilde{\Delta} B A_{\widehat{\beta},-j}(\widehat{\gamma}_{\widehat{\beta},j}-\gamma_{\beta_0,j})\|^2-\mathcal{O}_P(\sqrt{\log p/n})\|\widehat{\gamma}_{\widehat{\beta},j}-\gamma_{\beta_0,j}\|_1^2\\
		&&\geq \Lambda_{\min}^2 \|\widehat{\gamma}_{\widehat{\beta},j}-\gamma_{\beta_0,j}\|^2-\mathcal{O}_P(\sqrt{\log p/n}\times (s_j\sqrt{\log p/n})^2)-\mathcal{O}_P(\sqrt{\log p/n}\times (\lambda^2s_0/\lambda_j)^2)\\
		&&\qquad\qquad-\mathcal{O}_P(\sqrt{\log p/n}\times \lambda^2s_0s_j\sqrt{\log p/n}/\lambda_j),
	\end{eqnarray*}
	where the first inequality follows from triangular inequality, and the second one holds due to Lemma \ref{lemma:gamma1} and the fact that the smallest eigenvalue $\Lambda_{\min}^2$ of $\Sigma_{\beta_0}$ stays away from zero. Again from Lemma \ref{lemma:gamma1}, we note that $\|\widetilde{\Delta} B A_{\widehat{\beta},-j}(\widehat{\gamma}_{\widehat{\beta},j}-\gamma_{\beta_0,j})\|^2/{n^2}=\mathcal{O}_P(s_j\log p/n)+\mathcal{O}_P(\lambda^2s_0)$. Hence, we prove that $ \|\widehat{\gamma}_{\widehat{\beta},j}-\gamma_{\beta_0,j}\|=\mathcal{O}_P(\sqrt{s_j\log p/n})+\mathcal{O}_P(\lambda\sqrt{s_0})$.
\end{proof}

\subsection{Proof of Lemma~\ref{lem:tj1}}
\begin{proof}
We first show that
        \begin{equation}\label{taudif}
        |\widehat{\tau}_{\widehat{\beta},j}^2-\tau_{\beta_0,j}^2|=\mathcal{O}_P(s_j\sqrt{\log p/n})+\mathcal{O}_P(\lambda \sqrt{s_0}).
        \end{equation}
By definition, $\widehat{\tau}_{\widehat{\beta},j}=A_{\widehat{\beta},j}^T B \widetilde{\Delta} (\widetilde{\Delta} B A_{\widehat{\beta},j}-\widetilde{\Delta} B A_{\widehat{\beta},-j}\widehat{\gamma}_{\widehat{\beta},j})/n^2.$ Note that $A_{\widehat{\beta},j}=W_{\widehat{\beta}} H_{\widehat\beta}  H_{\beta_0}^{-1} W_{\beta_0}^{-1}A_{\beta_0,j}$ and $A_{\widehat{\beta},j}-A_{\widehat{\beta},-j}\widehat{\gamma}_{\widehat{\beta},j}= W_{\widehat{\beta}} H_{\widehat\beta}  H_{\beta_0}^{-1} W_{\beta_0}^{-1}\left( A_{\beta_0,j}-A_{\beta_0,-j}\widehat{\gamma}_{\widehat{\beta},j}\right).$ We have
    \begin{align*}
    \widehat{\tau}_{\widehat{\beta},j}^2-\tau_{\beta_0,j}^2&=A_{\beta_0,j}^T  B\widetilde{\Delta} (A_{\beta_0,j}-A_{\beta_0,-j}\widehat{\gamma}_{\widehat{\beta},j})/n^2-\tau_{\beta_0,j}^2\\
    &\quad+A_{\beta_0,j}^T \bigg\{ W_{\beta_0}^{-1}(H_{\beta_0}^T)^{-1} H_{\widehat\beta}^TB \widetilde{\Delta} W_{\widehat{\beta}}^2 \widetilde{\Delta} BH_{\widehat\beta}  H_{\beta_0}^{-1} W_{\beta_0}^{-1}-B\widetilde{\Delta}\bigg\}(A_{\beta_0,j}-A_{\beta_0,-j}\widehat{\gamma}_{\widehat{\beta},j})/n^2\\
    &\triangleq (i) + (ii).
    \end{align*}

Since $A_{\beta_0,j}=\eta_{\beta_0,j}+A_{\beta_0,-j}\gamma_{\beta_0,j}$ and $A_{\beta_0,j}-A_{\beta_0,-j}\widehat{\gamma}_{\widehat{\beta},j}=\eta_{\beta_0,j}-A_{\beta_0,-j}(\widehat{\gamma}_{\widehat{\beta},j}-\gamma_{\beta_0,j})$,
        \begin{eqnarray*}
        	(i)&&\leq|\eta_{\beta_0,j}^T B \widetilde{\Delta} \eta_{\beta_0,j}/n^2-\tau_{\beta_0,j}^2|+|\eta_{\beta_0,j}^T B\widetilde{\Delta} A_{\beta_0,-j}(\widehat{\gamma}_{\widehat{\beta},j}-\gamma_{\beta_0,j})/n^2|\\
        	&&\qquad+|\eta_{\beta_0,j}^T B\widetilde{\Delta} A_{\beta_0,-j}\gamma_{\beta_0,j}/n^2|+{|\gamma_{\beta_0,j}^T A_{\beta_0,-j}^T B\widetilde{\Delta} A_{\beta_0,-j}(\widehat{\gamma}_{\widehat{\beta},j}-\gamma_{\beta_0,j})/n^2|}\\
        	&&\triangleq I + II + III + IV.
        \end{eqnarray*}

For $I$, by the definition of $\tau_{\beta_0,j}$, some simple algebra shows  that $\tau_{\beta_0,j}^2=\mathbb{E}_{X,Y,{\bm X_i},Y_i,\Delta_i}\{(D_{\beta_0,j}-D_{\beta_0,-j}\gamma_{\beta_0,j})^T(D_{\beta_0,j}-D_{\beta_0,-j}\gamma_{\beta_0,j})\}$, where $\mathbb{E}_{X,Y,{\bm X_i},Y_i,\Delta_i}(\cdot)$ is the expectation respect to $(X,Y,{\bm X_i},Y_i,\Delta_i)$.
 %   \begin{eqnarray*}
 %      \tau_{\beta_0,j}^2 &&=\mathbb{E}_{X,Y,{\bm X_i},Y_i,\Delta_i}(D_{\beta_0,j}^TD_{\beta_0,j})\\
 %       &&\qquad-\mathbb{E}_{X,Y,{\bm X_i},Y_i,\Delta_i}(D_{\beta_0,j}^TD_{\beta_0,-j})\{\mathbb{E}_{X,Y,{\bm X_i},Y_i,\Delta_i}(D_{\beta_0,-j}^TD_{\beta_0,-j})\}^{-1}\mathbb{E}_{X,Y,{\bm X_i},Y_i,\Delta_i}(D_{\beta_0,-j}^TD_{\beta_0,j})\\
%        &&=\mathbb{E}_{X,Y,{\bm X_i},Y_i,\Delta_i}[(D_{\beta_0,j}-D_{\beta_0,-j}\gamma_{\beta_0,j})^T(D_{\beta_0,j}-D_{\beta_0,-j}\gamma_{\beta_0,j})],
%    \end{eqnarray*}
 Hence,
    \begin{eqnarray*}
        I&&=\bigg[(A_{\beta_0,j}-A_{\beta_0,-j}\gamma_{\beta_0,j})^TB \widetilde{\Delta}(A_{\beta_0,j}-A_{\beta_0,-j}\gamma_{\beta_0,j})/n^2\\
        &&\qquad\qquad-
        \mathbb{E}_{X,Y}\{(D_{\beta_0,j}-D_{\beta_0,-j}\gamma_{\beta_0,j})^T(D_{\beta_0,j}-D_{\beta_0,-j}\gamma_{\beta_0,j})\}\bigg]\\
        &&\qquad+\bigg[\mathbb{E}_{X,Y}\{(D_{\beta_0,j}-D_{\beta_0,-j}\gamma_{\beta_0,j})^T(D_{\beta_0,j}-D_{\beta_0,-j}\gamma_{\beta_0,j})\}\\
        &&\qquad\qquad-\mathbb{E}_{X,Y,{\bm X_i},Y_i,\Delta_i}\{(D_{\beta_0,j}-D_{\beta_0,-j}\gamma_{\beta_0,j})^T(D_{\beta_0,j}-D_{\beta_0,-j}\gamma_{\beta_0,j})\}\bigg].\\
        && \triangleq (a) + (b).
    \end{eqnarray*}

From the equality $\frac{A_1}{B_1}-\frac{A_2}{B_2}=\frac{A_1}{B_1B_2}(B_2-B_1)+\frac{A_1-A_2}{B_2}$ and (\ref{weightbound}), with probability tending to 1,
    \begin{eqnarray*}
    &&A_{\beta_0,j}^T B \widetilde{\Delta} A_{\beta_0,k}/n^2-\mathbb{E}_{X,Y}(D_{\beta_0,j}^T D_{\beta_0,k})\nonumber\\
    &&=\frac{1}{n^2}\sum_{i=1}^n\sum_{l=1}^n\Delta_i \bigg\{\frac{1(Y_l\geq Y_i)\exp(X^T_l\beta_0)}{\widehat{\mu}_0(Y_i;\beta_0)}\bigg[X_{l}^{(j)}-\frac{\widehat{\mu}_{1j}(Y_i;\beta_0)}{\widehat{\mu}_{0}(Y_i;\beta_0)}\bigg]
    \bigg[X_{l}^{(k)}-\frac{\widehat{\mu}_{1k}(Y_i;\beta_0)}{\widehat{\mu}_{0}(Y_i;\beta_0)}\bigg]\nonumber\\
    &&\qquad-\mathbb{E}_{X,Y}\bigg(\frac{1(Y\geq Y_i)\exp({{\bm X}^T\beta_0})}{\mu_0(Y_i;\beta_0)}\bigg[X^{(j)}-\frac{\mu_{1j}(Y_i;\beta_0)}{\mu_{0}(Y_i;\beta_0)}\bigg]
    \bigg[X^{(k)}-\frac{\mu_{1k}(Y_i;\beta_0)}{\mu_{0}(Y_i;\beta_0)}\bigg]\bigg)\bigg\}\nonumber\\
    &&=\frac{1}{n}\sum_{i=1}^n \Delta_i \bigg\{ \bigg[\frac{\widehat{\mu}_{2jk}(Y_i,\beta_0)}{\widehat{\mu}_0(Y_i,\beta_0)}-\frac{\mu_{2jk}(Y_i,\beta_0)}{\mu_0(Y_i,\beta_0)}\bigg]\nonumber
    -\bigg[\frac{\widehat{\mu}_{1j}(Y_i,\beta_0)\widehat{\mu}_{1k}(Y_i,\beta_0)}{\widehat{\mu}_0(Y_i,\beta_0)^2}
    -\frac{\mu_{1j}(Y_i,\beta_0)\mu_{1k}(Y_i,\beta_0)}{\mu_0(Y_i,\beta_0)^2}\bigg]\bigg\}\nonumber\\
    &&\leq 2U/\xi \sup_{t\in[0,\tau]}\bigg|\widehat{\mu}_{2jk}(t,\beta_0)-\mu_{2jk}(t,\beta_0)\bigg|+4U^3K^2/\xi^2 \sup_{t\in[0,\tau]}\bigg|\widehat{\mu}_0(t,\beta_0)-\mu_0(t,\beta_0)\bigg|\nonumber\\
    &&\qquad+4U^2/\xi^2 \sup_{t\in[0,\tau]}\bigg|\widehat{\mu}_{1j}(t,\beta_0)\widehat{\mu}_{1k}(t,\beta_0)-\mu_{1j}(t,\beta_0)\mu_{1k}(t,\beta_0)\bigg|\nonumber\\
    &&\qquad+16U^6K^2/\xi^4\sup_{t\in[0,\tau]}\bigg|\widehat{\mu}_0(t,\beta_0)^2-\mu_0(t,\beta_0)^2\bigg|.
    \end{eqnarray*}
    where $\widehat{\mu}_{1j}$ is the $j$-th element in $\widehat{\mu}_1$, $\mu_{1j}$ is the $j$-th element in $\mu_1$, $\widehat{\mu}_{2jk}$ is the $(j,k)$ element in matrix $\widehat{\mu}_2$, and $\mu_{2jk}$ is the $(j,k)$ element in matrix $\mu_2$. From Lemmas 3 and 4 in \cite{kong:2014}, we obtain $\sup_{t\in[0,\tau]}|\widehat{\mu}_0(t,\beta_0)-\mu_0(t,\beta_0)|=\mathcal{O}_P(\sqrt{\log p/n})$ and $\max_{1\leq j\leq p}\sup_{t\in[0,\tau]}|\widehat{\mu}_{1j}(t,\beta_0)-\mu_{1j}(t,\beta_0)|=\mathcal{O}_P(\sqrt{\log p/n})$.
    We can similarly construct bracketing numbers for the class of functions
    $$
    \mathcal{F}^{jk}=\{1(y\geq t)e^{x\beta}x^{(j)}x^{(k)}/(K^2U):t\in[0,\tau], y\in R, |e^{x\beta}|\leq U, \|x\|_{\infty}\leq K\},$$
    for $j,k=1,\cdots,p.$ Applying Theorem 2.14.9 in \cite{van:1996}, we obtain that
    \begin{eqnarray*}
        \max_{1\leq j\leq p,1\leq k\leq p}\sup_{t\in[0,\tau]}\bigg|\widehat{\mu}_{2jk}(t,\beta_0)-\mu_{2jk}(t,\beta_0)\bigg|=\mathcal{O}_P(\sqrt{\log p/n}).%\label{mu2}
    \end{eqnarray*}
    Hence,
    \begin{eqnarray}
    &&|A_{\beta_0,j}^T B \widetilde{\Delta} A_{\beta_0,j}/n^2-\mathbb{E}_{X,Y}(D_{\beta_0,j}^T D_{\beta_0,j})|=\mathcal{O}_P(\sqrt{\log p/n}),\label{ADdif1}\\
    &&|\gamma_{\beta_0,j}^TA_{\beta_0,-j}^TB\widetilde{\Delta} A_{\beta_0,j}/n^2-\gamma_{\beta_0,j}^T\mathbb{E}_{X,Y}(D_{\beta_0,-j}D_{\beta_0,j})|\nonumber\\
    &&\qquad\leq
    |\gamma_{\beta_0,j}|_1\|A_{\beta_0,-j}^TB\widetilde{\Delta} A_{\beta_0,j}/n^2-\mathbb{E}_{X,Y}(D_{\beta_0,-j}D_{\beta_0,j})\|_{\infty}\leq \mathcal{O}_P(\sqrt{s_j}\sqrt{\log p/n}),\label{ADdif2}\\
    &&|\gamma_{\beta_0,j}^TA_{\beta_0,-j}^TB\widetilde{\Delta} A_{\beta_0,-j}\gamma_{\beta_0,j}/n^2
    -\gamma_{\beta_0,j}^T\mathbb{E}_{X,Y}(D_{\beta_0,-j}D_{\beta_0,-j})\gamma_{\beta_0,j}|\nonumber\\
    &&\qquad\leq
    |\gamma_{\beta_0,j}|_1^2\|A_{\beta_0,-j}^TB\widetilde{\Delta} A_{\beta_0,-j}/n^2-\mathbb{E}_{X,Y}(D_{\beta_0,-j}D_{\beta_0,-j})\|_{\infty}\leq \mathcal{O}_P(s_j \sqrt{\log p/n}).\label{ADdif3}
    \end{eqnarray}
    Using the fact that $(b)=\mathcal{O}_P(n^{-1/2}),$
    and (\ref{ADdif1}), (\ref{ADdif2}), and (\ref{ADdif3}), we have $I=\mathcal{O}_P(s_j\sqrt{\log p/n})$.
%    \begin{eqnarray*}
%        I=\mathcal{O}_P(n^{-1/2})+\mathcal{O}_P(\sqrt{\log p/n})+\mathcal{O}_P(s_j\sqrt{\log p/n})=\mathcal{O}_P(s_j\sqrt{\log p/n}).
%    \end{eqnarray*}

{By substituting $\lambda_j\asymp \sqrt{\log p/n}$ into $\mathcal{E}_n$ in the proof of Lemma~\ref{lemma:gamma1}
, we have $	\|\eta_{\beta_0,j}^TB\tilde{\Delta}A_{\beta_0,-j}\|_{\infty}/n^2=\mathcal{O}_P(\sqrt{\log p/n}).$ }Then together with the inequality $\|\gamma_{\beta_0,j}\|_1\leq \sqrt{s_j}\|\gamma_{\beta_0,j}\|_2=\mathcal{O}_P(\sqrt{s_j})$, we get
\begin{align*}II&=\mathcal{O}_P(\sqrt{\log p/n})\|\widehat{\gamma}_{\widehat{\beta},j}-\gamma_{\beta_0,j}\|_1=\mathcal{O}_P((s_j\vee s_0)\log p/n),\\
III&=\mathcal{O}_P(\sqrt{\log p/n})\|\gamma_{\beta_0,j}\|_1=\mathcal{O}_P(\sqrt{s_j\log p/n}).\end{align*}
For $IV$, it follows from the KKT condition that
    \begin{eqnarray*}
        IV\leq \|A_{\beta_0,-j}^T B\widetilde{\Delta} A_{\beta_0,-j}(\widehat{\gamma}_{\widehat{\beta},j}-\gamma_{\beta_0,j})/n^2\|_{\infty}\|\gamma_{\beta_0,j}\|_1=\mathcal{O}_P(\sqrt{s_j\log p/n}).
    \end{eqnarray*}
    Hence, $(i)=\mathcal{O}_P(s_j\sqrt{\log p/n})$.

    For $(ii)$, $\|A_{\beta_0,j}\|_{\infty}=\mathcal{O}_P(1)$ from (\ref{boundAbeta}) and
$\|A_{\beta_0,-j}\widehat{\gamma}_{\widehat{\beta},j}\|_{\infty}\leq \|A_{\beta_0,-j}\gamma_{\beta_0,j}\|_{\infty}+\mathcal{O}_P(1) \|\widehat{\gamma}_{\widehat{\beta},j}-\gamma_{\beta_0,j}\|_1=\mathcal{O}_P(1).$
    Thus we get
    \begin{equation*}
    (ii)=A_{\beta_0,j}^T W_{\beta_0}^{-1}(H_{\beta_0}^T)^{-1}\{ H_{\widehat\beta}^TB\widetilde{\Delta} W_{\widehat{\beta}}^2 \widetilde{\Delta} B H_{\widehat\beta} -H_{\beta_0}^TB\widetilde{\Delta}  W_{\beta_0}^2\widetilde{\Delta} B H_{\beta_0} \} H_{\beta_0}^{-1} W_{\beta_0}^{-1}
    (A_{\beta_0,j}-A_{\beta_0,-j}\widehat{\gamma}_{\widehat{\beta},j})/n^2.
    \end{equation*}
    Similar to the proof of Theorem 3.2 in \cite{van:2014} and (\ref{wn2}), we have $(ii)=\mathcal{O}_P(\lambda \sqrt{s_0})$. Hence, we have (\ref{taudif}).

    Since $1/\tau_{\beta_0,j}^2=\mathcal{O}(1)$, together with (\ref{taudif}), it implies that
    $$|1/\widehat{\tau}_{\widehat{\beta},j}^2-1/\tau_{\beta_0,j}^2|=\mathcal{O}_P(s_j\sqrt{\log p/n})+\mathcal{O}_P(\lambda \sqrt{s_0}).$$
    We note that
    \begin{eqnarray*}
        &&\|\widehat{\Theta}_j-\Theta_j\|_1=\|\widehat{C}_j/\widehat{\tau}_{\widehat{\beta},j}^2-C_j/\tau_{\beta_0,j}^2\|_1\\
        &&\leq \|\widehat{\gamma}_{\widehat{\beta},j}-\gamma_{\beta_0,j}\|_1/\tau_{\beta_0,j}^2+(\|\gamma_{\beta_0,j}\|_1+1)|1/\widehat{\tau}_{\widehat{\beta},j}^2-1/\tau_{\beta_0,j}^2|\\
        &&= \mathcal{O}_P(s_j\sqrt{\log p/n})+\mathcal{O}_P(\lambda^2s_o/\lambda_j)+\sqrt{s_j}\{\mathcal{O}_P(s_j\sqrt{\log p/n})+\mathcal{O}_P(\lambda \sqrt{s_0})\}\\
        &&=\mathcal{O}_P(\lambda s_j^{3/2}\vee \lambda s_0)=\mathcal{O}_P(1/\sqrt{\log p}).
    \end{eqnarray*}
    Also, we get
    \begin{eqnarray*}
        &&\|\widehat{\Theta}_j-\Theta_j\|_2\leq \|\widehat{\gamma}_{\widehat{\beta},j}-\gamma_{\beta_0,j}\|_2/\tau_{\beta_0,j}^2+(\|\gamma_{\beta_0,j}\|_2+1)|1/\widehat{\tau}_{\widehat{\beta},j}^2-1/\tau_{\beta_0,j}^2|\\
        &&= \mathcal{O}_P(\sqrt{s_j\log p/n})+\mathcal{O}_P(\lambda\sqrt{s_0})+\mathcal{O}_P(s_j\sqrt{\log p/n})+\mathcal{O}_P(\lambda \sqrt{s_0})\\
        &&=\mathcal{O}_P(\lambda \sqrt{s_j}\vee \lambda \sqrt{s_0} \vee \lambda s_j)=\mathcal{O}_P(\lambda \sqrt{s_0} \vee \lambda s_j).
    \end{eqnarray*}
    Moreover, we have
    \begin{eqnarray*}
        &&|\widehat{\Theta}_j^T\Sigma_{\beta_0}\widehat{\Theta}_j-\Theta_{jj}|\leq (\|\Sigma_{\beta_0}\|_{\infty}\|\widehat{\Theta}_j-\Theta_j\|_1^2)\wedge (\Lambda_{\max}^2\|\widehat{\Theta}_j-\Theta_j\|_2^2)+2|1/\widehat{\tau}_{\widehat{\beta},j}^2-1/\tau_{\beta_0,j}^2|.
    \end{eqnarray*}
\end{proof}

\subsection{Proof of Lemma \ref{lemma:scorenormal}}
%From Lemma \ref{lem:tj1}, we obtain Lemma \ref{lemma:scorenormal}.
\begin{proof}
From \cite{lin:1989}, we have $\sqrt{n} \dot{l}_n(\beta_0)\rightarrow^d \mathcal{N}(0,\mathbb{E}\left\{n^{-1}\sum_{i=1}^n v_i(\beta_0)^{\otimes 2}\right\})$. Because $\Theta_j^T \dot{l}_n(\beta_0)$ is bounded by (\ref{weightbound}), we only need to prove that
    \begin{equation}
    \frac{\sqrt{n}\widehat{\Theta}_j \dot{l}_n(\beta_0)}{\widehat{\Theta}_j^T E\left\{n^{-1}\sum_{i=1}^n v_i(\beta_0)^{\otimes 2}\right\}\widehat{\Theta}_j}=\frac{\sqrt{n}{\Theta}_j \dot{l}_n(\beta_0)}{{\Theta}_j^T E\left\{n^{-1}\sum_{i=1}^n v_i(\beta_0)^{\otimes 2}\right\}{\Theta}_j}+o_P(1). \label{scorenormal0}
    \end{equation}
    It follows from Lemma 4 in \cite{kong:2014}, Lemma A.1 in \cite{chernozhukov:2013} and  Lemma \ref{lem:tj1} that
    \begin{eqnarray*}
        &&|(\widehat{\Theta}_j-\Theta_j)^T \dot{l}_n(\beta_0)|\\
        &&\leq \bigg\{\bigg\|n^{-1}\sum_{i=1}^n \bigg\{{\bm X_i}-\frac{\mu_1(Y_i;\beta_0)}{\mu_0(Y_i;\beta_0)}\bigg\}\Delta_i\bigg\|_{\infty}+\sup_{t\in [0,\tau]} \bigg\|\frac{\widehat{\mu}_1(Y_i;\beta_0)}{\widehat{\mu}_0(Y_i;\beta_0)}-\frac{\mu_1(Y_i;\beta_0)}{\mu_0(Y_i;\beta_0)}\bigg\|_{\infty}\bigg\} \|\widehat{\Theta}_j-\Theta_j\|_1\\
        &&=\mathcal{O}_P(\lambda)o_P(1/\sqrt{\log p})=o_P(n^{-1/2}).
    \end{eqnarray*}

    From Lemma 6.1 in \cite{van:2014}, we get
    \begin{eqnarray}
    &&\bigg|\widehat{\Theta}_j^T \mathbb{E}\left\{n^{-1}\sum_{i=1}^n v_i(\beta_0)^{\otimes 2}\right\} \widehat{\Theta}_j-\Theta_j^T \mathbb{E}\left\{n^{-1}\sum_{i=1}^n v_i(\beta_0)^{\otimes 2}\right\} \Theta_j\bigg|\nonumber\\
    &&\leq \bigg\|\mathbb{E}\left\{n^{-1}\sum_{i=1}^n v_i(\beta_0)^{\otimes 2}\right\}\bigg\|_{\infty} \|\widehat{\Theta}_j-\Theta_j\|_1^2+\bigg\|\mathbb{E}\left\{n^{-1}\sum_{i=1}^n v_i(\beta_0)^{\otimes 2}\right\} \Theta_j\bigg\|_{\infty}\|\widehat{\Theta}_j-\Theta_j\|_1\nonumber\\
    &&=o_P(1). \label{scorenormal1}
    \end{eqnarray}
    The last equality holds from Lemma \ref{lem:tj1} and the fact that $\|\mathbb{E}\left\{n^{-1}\sum_{i=1}^n v_i(\beta_0)^{\otimes 2}\right\}\|_{\infty}=\mathcal{O}_P(1)$ and $\|\mathbb{E}\left\{n^{-1}\sum_{i=1}^n v_i(\beta_0)^{\otimes 2}\Theta_j\right\}\|_{\infty}=\mathcal{O}_P(1)$.
    Thus, we obtain (\ref{scorenormal0}).
\end{proof}

\subsection{Proof of Theorem \ref{thm:projection}}
\begin{proof}
It suffices to show that $\beta_0=({\beta}_{01}^*,0)$ is the solution to (\ref{pseudo}). Since $l(\beta)$ is strictly convex around $\beta_0$, we have the solution for
    \begin{equation}
    \dot{l}(\beta)=-\mathbb{E}_{X,Y,\Delta}\bigg\{X\Delta
    -\frac{\mu_1(Y;\beta)}{\mu_0(Y;\beta)}\Delta
    \bigg\}=0 \label{ldot}
    \end{equation}
    is unique, and ${\beta}_{01}^*$ is the unique solution to $\mathbb{E}_{\bm X_1^*,Y,\Delta}\bigg\{\bm X_1^*\Delta
    -\frac{\mu_1^{*}(Y;\beta)}{\mu_0(Y;\beta)}\Delta
    \bigg\}=0$, where $\mu_1^*(Y;\beta)$ is a subvector of $\mu_1(Y;\beta)$ which corresponds to the components of $\bm X_1^*$.
    Define the cumulative conditional hazard function $\Lambda_1(t\mid \bm X)=\int_{0}^t \lambda_0(s\mid \bm X)ds$ and the conditional cumulative distribution function for the censoring time $C$ given ${\bm X}$ as $G_{C\mid \bm X}(c,\bm X)$. We have $\Lambda_1(t\mid \bm X_1^*,X_2^*)=\Lambda_1(t\mid \bm X_1^*)$, $G_{C\mid \bm X}(c,\bm X_1^*,\bm X_2^*)=G_{C\mid \bm X}(c,\bm X_1^*)$ and
    \begin{eqnarray*}
      %  \mathbb{E}(\bm X_1^*\Delta\mid \bm X_1^*)&&=\mathbb{E}\bigg\{\bm X_1^*\int_0^{\infty} 1-e^{-\Lambda_1(c\mid \bm X_1^*)}dG_{C\mid \bm X}(c,\bm X_1^*)\mid \bm X_1^*\bigg\},\\
        \mathbb{E}(\bm X_2^*\Delta\mid \bm X_1^*)&&=\mathbb{E}\bigg\{\bm X_2^*\int_0^{\infty} 1-e^{-\Lambda_1(c\mid \bm X)}dG_{C\mid \bm X}(c,\bm X)\mid \bm X_1^*\bigg\}\\
        &&=\mathbb{E}\bigg\{\bm X_2^*\int_0^{\infty} 1-e^{-\Lambda_1(c\mid \bm X_1^*)}dG_{C\mid \bm X}(c,\bm X_1^*)\mid \bm X_1^*\bigg\}=0,\\
    \mu_1(t;(\beta_{01},0))&&=\mathbb{E}_{\bm X_1^*}\bigg( \mathbb{E}_{\bm X_2^*\mid \bm X_1^*}\bigg[ \left(
    \begin{array}{c}
    	\bm X_1^* \\
    	\bm X_2^* \\
    \end{array}
    \right) e^{{\bm X}^T\beta_{0}-\Lambda_1(c\mid \bm X)}\{1-G_{C\mid \bm X}(t,X)\}\bigg|\bm X_1^* \bigg]\bigg)\\&&=\mathbb{E}_{\bm X_1^*}\bigg( \mathbb{E}_{\bm X_2^*\mid \bm X_1^*}\bigg[ \left(
    \begin{array}{c}
    	\bm X_1^* \\
    	\bm X_2^* \\
    \end{array}
    \right) e^{\bm X_1^*\beta_{01}-\Lambda_1(c\mid \bm X_1^*)}\{1-G_{C\mid \bm X}(t,\bm X_1^*)\}\bigg|\bm X_1^* \bigg]\bigg)\\
    &&=\left(
    \begin{array}{c}
    	\mathbb{E}_{\bm X_1^*}\big[\bm X_1^* e^{\bm X_1^*\beta_{01}-\Lambda_1(c\mid \bm X_1^*)}\{1-G_{C\mid \bm X}(t,\bm X_1^*)\}\big] \\
    	0 \\
    \end{array}
    \right).
    \end{eqnarray*}

    The last two equations hold because $\mathbb{E}(\bm X_2^*\mid \bm X_1^*)=0$. Hence, $\beta=({\beta}_{01}^*,0)$ is the solution to (\ref{ldot}). By the uniqueness, we have $\beta_0=({\beta}_{01}^*,0)$.

    %   \textbf{What is the dimension of 0?}
\end{proof}

\subsection{Proof of Theorem \ref{thm:normalthm}}

\begin{proof}
We first note that
    \begin{eqnarray*}
        \widehat{b}_j-\beta_{0j}&&=\widehat{\beta}_j-\widehat{\Theta}_j^T\dot{l}_n(\widehat{\beta})-\beta_{0j}\\
        &&=\widehat{\beta}_j-\beta_{0j}-\widehat{\Theta}_j^T\dot{l}_n(\beta_0)-\widehat{\Theta}_j^T\ddot{l}_n(\widehat{\beta})(\widehat{\beta}-\beta_0)-Rem_1\\
        %&&=-\widehat{\Theta}_j^T\dot{l}_n(\beta_0)-(\widehat{\Theta}_j^T\ddot{l}_n(\widehat{\beta})-e_j^T)(\widehat{\beta}-\beta_0)-Rem_1\\
        &&=-\widehat{\Theta}_j^T\dot{l}_n(\beta_0)-Rem_2,
    \end{eqnarray*} where $Rem_2=(\widehat{\Theta}_j^T\ddot{l}_n(\widehat{\beta})-e_j^T)(\widehat{\beta}-\beta_0)+Rem_1$. And
    \begin{eqnarray*}
    |Rem_1|&&=\bigg|\widehat{\Theta}_j^T\dot{l}_n(\widehat{\beta})-\widehat{\Theta}_j^T\dot{l}_n(\beta_0)-\widehat{\Theta}_j^T\ddot{l}_n(\widehat{\beta})
    (\widehat{\beta}-\beta_0)\bigg|\nonumber\\
    &&=\frac{1}{\widehat{\mu}_0(Y_i;\widehat{\beta})^2 \widehat{\mu}_0(Y_i;\beta_0)}\bigg|\widehat{\Theta}_j^T \widehat{\mu}_1(Y_i;\widehat{\beta})\widehat{\mu}_0(Y_i;\widehat{\beta})\widehat{\mu}_0(Y_i;\beta_0)-\widehat{\Theta}_j^T \widehat{\mu}_1(Y_i;\beta_0)\widehat{\mu}_0(Y_i;\widehat{\beta})^2\nonumber\\
    && \quad -\widehat{\Theta}_j^T \widehat{\mu}_2(Y_i;\widehat{\beta})\widehat{\mu}_0(Y_i;\widehat{\beta})\widehat{\mu}_0(Y_i;\beta_0)(\widehat{\beta}-\beta_0)+\widehat{\mu}_1(Y_i;\widehat{\beta})\widehat{\mu}_0(Y_i;\beta_0)
    \widehat{\mu}_1(Y_i;\widehat{\beta})^T(\widehat{\beta}-\beta_0)\bigg|\nonumber\\
    &&\leq \mathcal{O}_P(1)\frac{1}{n}\sum_{i=1}^n\bigg|\widehat{\Theta}_j^T\bigg\{\widehat{\mu}_1(Y_i;\widehat{\beta})\widehat{\mu}_0(Y_i;\widehat{\beta})\widehat{\mu}_0(Y_i;\beta_0)
    -\widehat{\mu}_1(Y_i;\beta_0)\widehat{\mu}_0(Y_i;\widehat{\beta})\widehat{\mu}_0(Y_i;\beta_0)\nonumber\\
    &&\qquad\qquad \qquad -\widehat{\mu}_2(Y_i;\widehat{\beta})\widehat{\mu}_0(Y_i;\widehat{\beta})\widehat{\mu}_0(Y_i;\beta_0)(\widehat{\beta}-\beta_0)\bigg\}\bigg|\label{rem11}\\
    &&\quad+\bigg|\widehat{\Theta}_j^T\bigg\{\widehat{\mu}_1(Y_i;\beta_0)\widehat{\mu}_0(Y_i;\widehat{\beta})\widehat{\mu}_0(Y_i;\beta_0)-\widehat{\mu}_1(Y_i;\beta_0)\widehat{\mu}_0(Y_i;\widehat{\beta})^2\nonumber\\
    &&\qquad\qquad\qquad +\widehat{\mu}_1(Y_i;\beta_0)\widehat{\mu}_0(Y_i;\widehat{\beta})\widehat{\mu}_1(Y_i;\widehat{\beta})(\widehat{\beta}-\beta_0)\bigg\}\bigg|\label{rem12}\\
    &&\quad +\bigg|\widehat{\Theta}_j^T\bigg\{ \widehat{\mu}_1(Y_i;\widehat{\beta})\widehat{\mu}_0(Y_i;\widehat{\beta})\widehat{\mu}_1(Y_i;\widehat{\beta})(\widehat{\beta}-\beta_0)
    -\widehat{\mu}_1(Y_i;\beta_0)\widehat{\mu}_0(Y_i;\widehat{\beta})\widehat{\mu}_1(Y_i;\widehat{\beta})(\widehat{\beta}-\beta_0)\bigg\}\bigg|\label{rem13}\\
    &&\quad +\bigg|\widehat{\Theta}_j^T\bigg\{ \widehat{\mu}_1(Y_i;\widehat{\beta})\widehat{\mu}_0(Y_i;\beta_0)\widehat{\mu}_1(Y_i;\widehat{\beta})(\widehat{\beta}-\beta_0)
    -\widehat{\mu}_1(Y_i;\widehat{\beta})\widehat{\mu}_0(Y_i;\widehat{\beta})\widehat{\mu}_1(Y_i;\widehat{\beta})(\widehat{\beta}-\beta_0)\bigg\}\bigg|\label{rem14}\\
&&\triangleq  i + ii + iii + iv  .
    \end{eqnarray*}

    Using the mean value theorem, the Cauchy -Schwartz inequality, and the fact that $\|\widehat{\Theta}_j^T X\|_{\infty}=\mathcal{O}_P(1)$, we obtain
    \begin{eqnarray*}
        i&\leq& \mathcal{O}_P(1)\frac{1}{n}\sum_{i=1}^n \bigg|\frac{1}{n}\sum_{k=1}^n 1(Y_k\geq Y_i) [(e^{X_k^T\widehat{\beta}}-e^{X_k^T\beta_0})\widehat{\Theta}_j^T X_k-e^{X_k^T\widehat{\beta}}\widehat{\Theta}_j^T X_k X_k^T (\widehat{\beta}-\beta_0) ]\bigg|\\
        &\leq &\mathcal{O}_P(1)\frac{1}{n}\sum_{k=1}^n \bigg|e^{X_k^T\widehat{\beta}}-e^{X_k^T\beta_0}-e^{X_k^T\widehat{\beta}}X_k^T (\widehat{\beta}-\beta_0)\bigg|\\
        &\leq & \mathcal{O}_P(1)\frac{1}{n}\sum_{k=1}^n \bigg|e^{\tilde{a}_k} X_k(\widehat{\beta}-\beta_0)-e^{X_k^T\widehat{\beta}}X_k^T (\widehat{\beta}-\beta_0)\bigg|\\
        &\leq & \mathcal{O}_P(1)\frac{1}{n}\sum_{k=1}^n |\tilde{a}_k-X_k^T \widehat{\beta}||{\bm X}_k^T (\widehat{\beta}-\beta_0)|\\
        &\leq & \mathcal{O}_P(1) \frac{1}{n}\sum_{k=1}^n [X_k^T(\widehat{\beta}-\beta_0)]^2\\
        &=&\mathcal{O}_P(\lambda^2s_0),
    \end{eqnarray*}
    where $\tilde{a}_k$ is a point intermediating $X_k^T\widehat{\beta}$ and $X_k^T\beta_0$ such that $|\tilde{a}_k-X_k^T\widehat{\beta}|\leq |{\bm X}_k^T (\widehat{\beta}-\beta_0)|$.
    Similarly,
    \begin{eqnarray*}
       ii&\leq& \mathcal{O}_P(1)\frac{1}{n}\sum_{i=1}^n \bigg|\frac{1}{n}\sum_{k=1}^n 1(Y_k\geq Y_i) \{(e^{X_k^T\beta_0}-e^{X_k^T\widehat{\beta}})+e^{X_k^T\widehat{\beta}} X_k^T (\widehat{\beta}-\beta_0) \}\bigg|\\
        &\leq & \mathcal{O}_P(1)\frac{1}{n}\sum_{k=1}^n \bigg|-e^{\tilde{a}_k} X_k(\widehat{\beta}-\beta_0)+e^{X_k^T\widehat{\beta}}X_k^T (\widehat{\beta}-\beta_0)\bigg|\\
        &\leq & \mathcal{O}_P(\lambda^2s_0),\\
%   \end{eqnarray*}%
%   \begin{eqnarray*}
        iii&\leq& \mathcal{O}_P(1)\frac{1}{n}\sum_{i=1}^n \bigg|\bigg\{\frac{1}{n}\sum_{k=1}^n 1(Y_k\geq Y_i) (e^{X_k^T\widehat{\beta}}-e^{X_k^T\beta_0})\widehat{\Theta}_j^T X_k\bigg\}\bigg\{\frac{1}{n}\sum_{k=1}^n e^{X_k^T\widehat{\beta}} X_k^T (\widehat{\beta}-\beta_0) \bigg\}\bigg|\\
        &\leq & \mathcal{O}_P(1)\frac{1}{n}\sum_{i=1}^n \bigg|\bigg\{\frac{1}{n}\sum_{k=1}^n 1(Y_k\geq Y_i) e^{\tilde{a}_k} X_k^T (\widehat{\beta}-\beta_0)\bigg\}\bigg\{\frac{1}{n}\sum_{k=1}^n e^{X_k^T\widehat{\beta}} X_k^T (\widehat{\beta}-\beta_0) \bigg\}\bigg|\\
        &\leq & \mathcal{O}_P(1)\bigg\{\frac{1}{n}\sum_{k=1}^n |{\bm X}_k^T (\widehat{\beta}-\beta_0)| \bigg\}^2\\
        &\leq & \mathcal{O}_P(1) \frac{1}{n}\sum_{k=1}^n [X_k^T(\widehat{\beta}-\beta_0)]^2\\
        &=&\mathcal{O}_P(\lambda^2s_0),\\
%   \end{eqnarray*}
%   \begin{eqnarray*}
        iv&\leq& \mathcal{O}_P(1)\frac{1}{n}\sum_{i=1}^n \bigg|\bigg\{\frac{1}{n}\sum_{k=1}^n 1(Y_k\geq Y_i) (e^{X_k^T\widehat{\beta}}-e^{X_k^T\beta_0})\bigg\}\bigg\{\frac{1}{n}\sum_{k=1}^n e^{X_k^T\widehat{\beta}} X_k^T (\widehat{\beta}-\beta_0) \bigg\}\bigg|\\
        &\leq & \mathcal{O}_P(1)\frac{1}{n}\sum_{i=1}^n \bigg|\bigg\{\frac{1}{n}\sum_{k=1}^n 1(Y_k\geq Y_i) e^{\tilde{a}_k} X_k^T (\widehat{\beta}-\beta_0)\bigg\}\bigg\{\frac{1}{n}\sum_{k=1}^n e^{X_k^T\widehat{\beta}} X_k^T (\widehat{\beta}-\beta_0) \bigg\}\bigg|\\
        &\leq & \mathcal{O}_P(\lambda^2s_0).
    \end{eqnarray*}
Hence, $|Rem_1|=\mathcal{O}_P(\lambda^2s_0)$. From  Lemma \ref{lemma:betaconsis},
    \begin{eqnarray*}
        |Rem_2|\leq |Rem_1|+\mathcal{O}_P(\sqrt{\log p/n})\|\widehat{\beta}-\beta_0\|_1=o_P(n^{-1/2}).
    \end{eqnarray*}

    We now show that our variance estimator is consistent. Given that, the asymptotic normality holds from Lemma \ref{lemma:scorenormal}. By the triangular inequality, we have
    \begin{eqnarray*}
        &&\bigg|\widehat{\Theta}_j^T \mathbb{E}\left\{n^{-1}\sum_{i=1}^n v_i(\beta_0)^{\otimes 2}\right\}\widehat{\Theta}_j-\widehat{\Theta}_j^T \left\{n^{-1}\sum_{i=1}^n \widehat{v}_i(\widehat{\beta})^{\otimes 2}\right\}\widehat{\Theta}_j\bigg|\\
        \leq&& \bigg|\widehat{\Theta}_j^T \bigg[n^{-1}\sum_{i=1}^n v_i(\beta_0)^{\otimes 2}-\mathbb{E}\left\{n^{-1}\sum_{i=1}^n v_i(\beta_0)^{\otimes 2}\right\}\bigg]\widehat{\Theta}_j\bigg|+\bigg|\widehat{\Theta}_j^T \bigg[n^{-1}\sum_{i=1}^n \widehat{v}_i(\beta_0)^{\otimes 2}-n^{-1}\sum_{i=1}^n v_i(\beta_0)^{\otimes 2}\bigg]\widehat{\Theta}_j\bigg|\\
        &&+\bigg|\widehat{\Theta}_j^T \bigg[n^{-1}\sum_{i=1}^n \widehat{v}_i(\widehat{\beta})^{\otimes 2}-n^{-1}\sum_{i=1}^n \widehat{v}_i(\beta_0)^{\otimes 2}\bigg]\widehat{\Theta}_j\bigg|\\
      \triangleq && I + II + III.
    \end{eqnarray*}
    Letting $\epsilon_{k,l}:=n^{-1}\sum_{i=1}^n v_i(\beta_0)^{\otimes 2}-\mathbb{E}\left\{n^{-1}\sum_{i=1}^n v_i(\beta_0)^{\otimes 2}\right\}$, we get from Lemma~\ref{lem:tj1} that
    \begin{eqnarray*}
        &&I=|\sum_{k,l}\widehat{\Theta}_{j,k}\widehat{\Theta}_{j,l}\epsilon_{k,l}|\leq \|\widehat{\Theta}_j\|_1^2\|\epsilon\|_{\infty}=\mathcal{O}_P({s_j\lambda})=o_P(1).
    \end{eqnarray*}
    As for $II$, we have
    \begin{eqnarray*}
         II&& = \bigg| n^{-1} \sum_{i=1}^n \left\{\widehat{\Theta}_j^T \widehat{v}_i(\beta_0)+\widehat{\Theta}_j^T v_i(\beta_0)\right\}\left\{\widehat{\Theta}_j^T \widehat{v}_i(\beta_0)-\widehat{\Theta}_j^T v_i(\beta_0)\right\}\bigg|\\
        && \leq \mathcal{O}_P(1) \bigg\{n^{-1} \sum_{i=1}^n \bigg|\widehat{\Theta}_j^T \Delta_i \bigg\{\frac{\widehat{\mu}_1(Y_i;\beta_0)}{\widehat{\mu}_0(Y_i;\beta_0)}-\frac{\mu_1(Y_i;\beta_0)}{\mu_0(Y_i;\beta_0)}\bigg\}\bigg|\\
        &&\quad+\bigg(n^{-1}\sum_{i=1}^n \bigg|n^{-1}\sum_{k=1}^n \Delta_k \bigg[\frac{1(Y_i\geq Y_k)\exp\{{\bm X_i}^T \beta_0\}}{\widehat{\mu}_0(Y_k;\beta_0)}\widehat{\Theta}_j^T\bigg\{{\bm X_i}-\frac{\widehat{\mu}_1(Y_k;\beta_0)}{\widehat{\mu}_0(Y_k;\beta_0)}\bigg\}\\
        && \qquad\qquad -\frac{1(Y_i\geq Y_k) \exp\{{\bm X_i}^T\beta_0\}}{\mu_0(Y_k;\beta_0)}\widehat{\Theta}_j^T \bigg\{{\bm X_i}-\frac{\mu_1(Y_k;\beta_0)}{\mu_0(Y_k;\beta_0)}\bigg\}\bigg]\bigg|\bigg)\\
        &&\quad + n^{-1}\sum_{i=1}^n\bigg|\int_0^{\infty}\frac{1(Y_i\geq t)\exp\{{\bm X_i}^T\beta_0\}}{\mu_0(t;\beta_0)}\widehat{\Theta}_j^T \bigg\{{\bm X_i}-\frac{\mu_1(t;\beta_0)}{\mu_0(t;\beta_0)}\bigg\}d\{\tilde{F}_n(t)-\tilde{F}(t)\}\bigg|\bigg\}\\
        && \triangleq\mathcal{O}_P(1)\{ (a) + (b) +(c)\},
    \end{eqnarray*}
    where $\tilde{F}_n(t)=n^{-1}\sum_{k=1}^n 1(Y_k\leq t, \Delta_k=1)$ and $\tilde{F}(t)=\mathbb{E}F_n(t)$.
    \begin{eqnarray*}
        (a)
        && \leq n^{-1}\sum_{i=1}^n \bigg|\widehat{\Theta}_j^T \Delta_i \bigg\{\frac{\widehat{\mu}_1(Y_i;\beta_0)}{\widehat{\mu}_0(Y_i;\beta_0)\mu_0(Y_i;\beta_0)}\left[\widehat{\mu}_0(Y_i;\beta_0)-\mu_0(Y_i;\beta_0)\right]\bigg\}\bigg|\\
        && \qquad +n^{-1} \sum_{i=1}^n \bigg|\widehat{\Theta}_j^T \Delta_i \frac{\widehat{\mu}_1(Y_i;\beta_0)-\mu_1(Y_i;\beta_0)}{\mu_0(Y_i;\beta_0)}\bigg|\\
        &&{\leq} 4U^2/(n\xi^2) \sum_{i=1}^n \bigg|n^{-1}\sum_{k=1}^n 1(Y_k\geq Y_i) \exp\{X_k^T\beta_0\}\widehat{\Theta}_j^T X_k\bigg|\\
        &&\qquad\bigg|\sup_{t\in[0,\tau]}n^{-1}\sum_{k=1}^n 1(Y_k\geq t) \exp\{X_k^T \beta_0\}-E[1(Y\geq t)\exp\{X^T \beta_0\}]\bigg|\\
        &&\quad +2U \xi \bigg|\sup_{t\in[0,\tau]}n^{-1}\sum_{k=1}^n 1(Y_k\geq t) \exp\{X_k^T \beta_0\}\widehat{\Theta}_j^TX_k-E[1(Y\geq t)\exp\{X^T \beta_0\}\widehat{\Theta}_j^T X]\bigg|\\
        &&\leq 4U^2/\xi^2 U \mathcal{O}_P(1)\mathcal{O}_P(\lambda)+\mathcal{O}_P(1) \|\widehat{\Theta}_j\|_1\mathcal{O}_P(\lambda)=\mathcal{O}_P(\lambda \sqrt{s_j})=o_P(1),
    \end{eqnarray*}
    where the second inequality holds with probability tending to $1$, by Lemma 3 in \cite{kong:2014}, and Assumption \ref{assp:xbeta}. Moreover, we have
    \begin{eqnarray*}
       (b) &&\le -\frac{1(Y_i\geq Y_k) \exp\{{\bm X_i}^T\beta_0\}}{\mu_0(Y_k;\beta_0)}\widehat{\Theta}_j^T \bigg\{{\bm X_i}-\frac{\mu_1(Y_k;\beta_0)}{\mu_0(Y_k;\beta_0)}\bigg\}\bigg]\bigg|\\
        && \leq n^{-1}\sum_{i=1}^n \bigg|n^{-1}\sum_{k=1}^n \Delta_k 1(Y_i\geq Y_k) \exp\{{\bm X_i}^T\beta_0\}\widehat{\Theta}_j^T {\bm X_i} \bigg\{1/\widehat{\mu}_0(Y_k;\beta_0)-1/\mu_0(Y_k;\beta_0)\bigg\}\bigg|\\
        && +n^{-1} \sum_{i=1}^n \bigg|n^{-1}\sum_{k=1}^n \Delta_k 1(Y_i\geq Y_k) \exp\{{\bm X_i}^T\beta_0\} \bigg\{\frac{\widehat{\Theta}_j^T [\widehat{\mu}_1(Y_k;\beta_0)-\mu_1(Y_k;\beta_0)]}{\mu_0(Y_k;\beta_0)^2}\\
        &&\qquad+\frac{\widehat{\Theta}_j^T \widehat{\mu}_1(Y_k;\beta_0)}{\widehat{\mu}_0(Y_k;\beta_0)^2\mu_0(Y_k;\beta_0)^2}[\widehat{\mu}_0(Y_k;\beta_0)+\mu_0(Y_k;\beta_0)][\widehat{\mu}_0(Y_k;\beta_0)-\mu_0(Y_k;\beta_0)]\bigg\}\bigg|\\
        &&\leq U \mathcal{O}_P(1) \sup_{t\in[0,\tau]}\bigg|1/\widehat{\mu}_0(Y_k;\beta_0)-1/\mu_0(Y_k;\beta_0)\bigg|\\
        &&\quad +4U^3/\xi^2 |\widehat{\Theta}_j|_1\sup_{t\in[0,\tau]}\max_{1\leq l\leq p}|\widehat{\mu}_{1l}(t;\beta_0)-\mu_{1l}(t;\beta_0)|+\mathcal{O}_P(1)\sup_{t\in[0,\tau]}|\widehat{\mu}_0(Y_k;\beta_0)-\mu_0(Y_k;\beta_0)|\\
        &&=\mathcal{O}_P(\lambda\sqrt{s_j})=o_P(1).
    \end{eqnarray*}
    As $n^{1/2}\{\bar{F}_n(t)-\bar{F}(t)\}$ converges in distribution to a zero-mean Gaussian process, $(c)=o_P(1)$, which implies that $II=o_P(1)$.

    For $III$, because $\|\Theta_j^TX\|_{\infty}=\mathcal{O}_P(1)$,
    \begin{eqnarray*}
        III &&=\bigg|n^{-1} \sum_{i=1}^n \left\{\widehat{\Theta}_j^T \widehat{v}_i(\widehat{\beta})+\widehat{\Theta}_j^T \widehat{v}_i(\beta_0)\right\}\left\{\widehat{\Theta}_j^T \widehat{v}_i(\widehat{\beta})-\widehat{\Theta}_j^T \widehat{v}_i(\beta_0)\right\}\bigg|\leq \mathcal{O}_P(1) n^{-1} \sum_{i=1}^n |\widehat{\Theta}_j^T \widehat{v}_i(\widehat{\beta})-\widehat{\Theta}_j^T \widehat{v}_i(\beta_0)|\\
        && \leq \mathcal{O}_P(1) \bigg(n^{-1}\sum_{i=1}^n \bigg|\widehat{\Theta}_j^T \bigg[\Delta_i\bigg\{{\bm X_i}-\frac{\widehat{\mu}_1(Y_i;\widehat{\beta})}{\widehat{\mu}_0(Y_i;\widehat{\beta})}\bigg\}
        -\Delta_i\bigg\{{\bm X_i}-\frac{\widehat{\mu}_1(Y_i;\beta_0)}{\widehat{\mu}_0(Y_i;\beta_0)}\bigg\}\bigg]\bigg|\\
        &&\qquad\qquad + n^{-1} \sum_{i=1}^n \bigg|\widehat{\Theta}_j^T \bigg\{n^{-1}\sum_{k=1}^n\frac{\Delta_k 1(Y_i\geq Y_k)\exp\{{\bm X_i}^T \widehat{\beta}\}}{\widehat{\mu}_0(Y_k;\widehat{\beta})}\bigg[{\bm X_i}-\frac{\widehat{\mu}_1(Y_k;\widehat{\beta})}{\widehat{\mu}_0(Y_k;\widehat{\beta})}\bigg]\\
        &&\qquad\qquad -\frac{\Delta_k 1(Y_i\geq Y_k)\exp\{{\bm X_i}^T \beta_0\}}{\widehat{\mu}_0(Y_k;\beta_0)}\bigg[{\bm X_i}-\frac{\widehat{\mu}_1(Y_k;\beta_0)}{\widehat{\mu}_0(Y_k;\beta_0)}\bigg]\bigg\}\bigg|\bigg)\\
        &&\leq \mathcal{O}_P(1) \bigg(n^{-1} \sum_{i=1}^n \bigg|\widehat{\Theta}_j^T \frac{\widehat{\mu}_1(Y_i;\widehat{\beta})}{\widehat{\mu}_0(Y_i;\widehat{\beta})\widehat{\mu}_0(Y_i;\beta_0)}[\widehat{\mu}_0(Y_i;\widehat{\beta})-\widehat{\mu}_0(Y_i;\beta_0)]\bigg|\\
        &&\qquad\qquad + n^{-1} \sum_{i=1}^n \bigg|\widehat{\Theta}_j^T\frac{\widehat{\mu}_1(Y_i;\widehat{\beta})-\widehat{\mu}_1(Y_i;\beta_0)}{\widehat{\mu}_0(Y_i;\beta_0)}\bigg|\\
        &&\qquad\qquad + n^{-1} \sum_{i=1}^n \bigg|\widehat{\Theta}_j^T {\bm X_i}\bigg\{n^{-1}\sum_{k=1}^n \Delta_k 1(Y_i\geq Y_k) \bigg\{\frac{\exp\{{\bm X_i}^T\widehat{\beta}\}}{\widehat{\mu}_0(Y_k;\widehat{\beta})}-\frac{\exp\{{\bm X_i}^T\beta_0\}}{\widehat{\mu}_0(Y_k;\beta_0)}\bigg\}\bigg\}\bigg|\\
        && \qquad\qquad+ n^{-1} \sum_{i=1}^n \bigg|\widehat{\Theta}_j^T n^{-1} \sum_{k=1}^n \Delta_k1(Y_i\geq Y_k)\bigg\{\frac{\widehat{\mu}_1(Y_i;\widehat{\beta})\exp\{{\bm X_i}^T\widehat{\beta}\}}{\widehat{\mu}_0(Y_i;\widehat{\beta})^2}
        -\frac{\widehat{\mu}_1(Y_i;\beta_0)\exp\{{\bm X_i}^T\beta_0\}}{\widehat{\mu}_0(Y_i;\beta_0)^2}\bigg\}\bigg|\bigg)\\
        &&=\mathcal{O}_P(|\widehat{\beta}-\beta_0|_1)=\mathcal{O}_P(\lambda s_0)=o_P(1),
    \end{eqnarray*}
    where the last equality holds from Lemma \ref{lemma:betaconsis}.
\end{proof}

\printendnotes

% Submissions are not required to reflect the precise reference formatting of the journal (use of italics, bold etc.), however it is important that all key elements of each reference are included.
\bibliography{sample}
\end{document}